\newcommand{\rA}{\mathscr{A}}
\newcommand{\op}{\operatorname}
\newcommand{\re}{\mathbb{R}}
\newcommand{\F}{\mathcal{F}}
\renewcommand{\P}{\mathcal{P}}
\newcommand{\six}{\sigma_{i}(\bar X)}
\newcommand{\sx}{\sigma(\bar X)}
\newcommand{\shx}{\sigma(\hat X)}
\newcommand{\s}{\sigma(X)}
\def\bt{\beta}
\newcommand{\reff}[1]{(\ref{#1})}
\newcommand{\bnum}{\begin{enumerate}}
\newcommand{\enum}{\end{enumerate}}
\newcommand{\bit}{\begin{itemize}}
\newcommand{\eit}{\end{itemize}}
\newcommand{\be}{\begin{equation}}
\newcommand{\ee}{\end{equation}}
\newcommand{\baray}{\begin{array}}
\newcommand{\earay}{\end{array}}
\newcommand{\bca}{\begin{cases}}
\newcommand{\eca}{\end{cases}}
\newcommand{\bcen}{\begin{center}}
\newcommand{\ecen}{\end{center}}
\newcommand{\bbm}{\begin{bmatrix}}
\newcommand{\ebm}{\end{bmatrix}}
\newcommand{\bpm}{\begin{pmatrix}}
\newcommand{\epm}{\end{pmatrix}}
\newcommand{\btab}{\begin{tabular}}
\newcommand{\etab}{\end{tabular}}
\newtheorem{theorem}{Theorem}[section]
\newtheorem{pro}[theorem]{Proposition}
\newtheorem{prop}[theorem]{Proposition}
\newtheorem{lem}[theorem]{Lemma}
\newtheorem{lemma}[theorem]{Lemma}
\newtheorem{defi}[theorem]{Definition}
\newtheorem{example}[theorem]{Example}
\newtheorem{remark}[theorem]{Remark}
\newcommand{\bm}[1]{{\mbox{\boldmath $#1$}}}
\newcommand*{\R}{\mathbb{R}}
\def\beq#1{\begin{equation}\label{#1}}
\def\eeq{\end{equation}}
\def\bep{\begin{proof}}
\def\ep{\end{proof}}
\def\bt{\begin{theorem}}
\def\et{\end{theorem}}
\def\bl{\begin{lemma}}
\def\el{\end{lemma}}
\def\reff#1{(\ref{#1})}
\def\ignore#1{}
\def\bea#1{\begin{array}{#1}}
\def\ea{\end{array}}
\begin{document}
\setcounter{page}{1}

\title{A  smoothing proximal gradient algorithm for matrix rank minimization problem }

\author{Quan Yu, Xinzhen Zhang}
\address{School of Mathematics, Tianjin University, Tianjin 300350, China.}
\email{QuanYu527@163.com}

\address{School of Mathematics, Tianjin University, Tianjin 300350, China.
} \email{xzzhang@tju.edu.cn}

\begin{abstract}
	In this paper, we study the low-rank matrix minimization problem, where the loss
	function is convex but nonsmooth and the penalty term is defined by the cardinality function. We first introduce an exact continuous relaxation, that is, both problems have the same minimzers and the same optimal value. In particular, we introduce a class of  lifted stationary point of the relaxed problem and show that any local minimizer of the relaxed problem must be a lifted stationary point. In addition, we derive lower bound property for the nonzero singular values of the lifted stationary point and hence also of the local minimizers of the relaxed problem. Then the smoothing proximal gradient (SPG) algorithm is proposed to find a lifted stationary point of the continuous relaxation model. Moreover, it is shown that the whole sequence generated by SPG algorithm converges to a lifted stationary point. At last, numerical examples show the efficiency of the SPG algorithm.	
\end{abstract}

\keywords{low-rank approximation, nonsmooth convex loss function, smoothing method}

\subjclass[2020]{15A03,15A83,90C30, 65K05}

\maketitle

\section{Introduction}
Over the last decade, finding a low-rank matrix solution to a system or low-rank matrix optimization problem have received more and more attention. Numerous optimization models and methods have been proposed in \cite{FHB01,FRW11,HH09,MGC11,MXZZ13,WYWY12}. In this paper, we consider the following matrix rank minimization problem with cardinality penalty, that is,
\begin{equation}\label{l0}
\min \F_{l_0}(X):=f(X)+\lambda\cdot \op{rank}(X)=f(X)+\lambda\|\s\|_{0},
\end{equation}
where $ X \in \mathbb R^{m\times n}\,(n\le m) $ and
$\sigma(X):=\left(\sigma_{1}(X), \ldots, \sigma_{n}(X)\right)^T$ is a vector composed of $X$ 's singular values with $\sigma_{1}(X) \geq \ldots \geq \sigma_{n}(X)\geq 0$. Furthermore, $ f: \mathbb{R}^{m\times n} \rightarrow[0, \infty)$ is convex (not necessarily smooth) and
$\lambda$ is a positive parameter.

One application of problem \eqref{l0} is the low-rank matrix recovery problem\cite{LL16,LZL17,MLH17,SWKT19}. 
To solve such problem, traditional algorithms are always based on $l_2$(or Frobenius)-nuclear model, that is, the loss function is a $l_2$-norm for vector case or Frobenius norm for matrix case, and $\op{rank}(X)$ is relaxed as a matrix nuclear norm.
   However, these models are sensitive to non-Gaussian noise with outliers \cite{HWLQC20,ZLSYO12,ZMXZY15,ZS18}. To overcome this drawback, the $l_1$ model is considered in the problem with the outlier-resistant loss function. For example, 
the following loss function is considered in the low-rank matrix recovery problem
\begin{equation}\label{l1-loss}
	f(X)=\|\rA(X)-b\|_{1},
\end{equation}
where 
the linear map $\mathscr{A}: \mathbb{R}^{m \times n} \rightarrow \mathbb{R}^{p}$ and vector $b \in \mathbb{R}^{p}$ are given. Obviously, $ f $ is convex but not smooth.
Considering low-rank matrix completion problem,  a special case of low-rank matrix recovery problem, the corresponding loss function $f(X)$ can be written as
\begin{equation*}
	f(X)=\|P_\Omega\left(X-M \right) \|_1,
\end{equation*}
where $M \in \R^{m \times n}$ is a known matrix, $\Omega$ is an index set which locates the observed data, $ \P_\Omega $ is a linear operator that extracts the entries in $\Omega$ and fills the entries not in $\Omega$ with zeros. In the robust principal component analysis (RPCA) problem \cite{BZ14,CCYZZ,PKC16,XCS12}, the loss function $f(X)$
is adopted as
\begin{equation}\label{eq:rpca}
	f(X)=\|L-X\|_{1},
\end{equation}
where $L \in \mathbb{R}^{m \times n}$ denotes the observed data. The RPCA problem aims to decompose the matrix $L$ as the sum of a low-rank matrix $X$ and a sparse matrix $E=L-X \in \mathbb{R}^{m \times n}$.

It is known that  matrix rank function is nonconvex and  nonsmooth. In the matrix rank minimization problem, one of common used convex relaxations of rank function is matrix nuclear norm.
Although the methods based  nuclear norm relaxation
have strong theoretical guarantees,  the obtained approximation solutions under certain incoherence assumptions are usually hard to satisfy in real applications \cite{CR09,CT10}. In other words, the nuclear norm is not a perfect approximation to the rank function.

In \cite{BC20}, the capped $l_1$ function, a continuous relaxation of $l_0$ function, was adopted in penalized sparse regression problem with some advantages.
Furthermore, a smoothing proximal gradient (SPG) algorithm with global convergence was proposed there. More recently, such technique was applied to group sparse optimization for images recovery in \cite{PC21}. It is well-known that the matrix norm can be expressed as a vector norm of  the singular value vector.
Motivated by these, we consider whether such SPG algorithm can be generalized from sparse regression problem  to low-rank matrix minimization  or not.

For this aim, let $\Phi(X)=\sum_{i=1}^{n} \phi\left(\sigma_i\left(X\right)\right)$  be a continuous relaxation of the rank function with the capped-$\ell_{1}$ function $\phi$ given by
\begin{equation}\label{phi}
	\phi(t)=\min \{1, t/ \nu\},\quad t\ge0 ,
\end{equation} where $\nu>0$ is  a parameter.
Based on $\Phi(X)$, we consider the following continuous optimization problem for solving \eqref{l0}:
\begin{equation}\label{lc}
\min \mathcal{F}(X):=f(X)+\lambda \Phi(X).
\end{equation}

Our contributions are as follows.
We first present the continuous relaxation problem (\ref{lc}) of problem (\ref{l0}), which are shown to have the same global optimizers. Furthermore, the local minimizer of \reff{lc} is a lifted stationary point of \reff{lc} with an expected lower bound property of singular values. Then an SPG algorithm is proposed to get a lifted stationary point of \reff{lc}  with global convergence.

\bm{Notations.} We denote $[n]=\{1,2, \ldots,n\}$ and $\mathbb{D}^{n}=\left\{d \in \mathbb{R}^{n}: d_{i} \in\{1,2\}, i\in [n]\right\}.$
 The space of $m \times n$ matrices is denoted by $\mathbb R^{m \times n}$. For a given matrix $X \in \mathbb{R}^{m\times n}$, $\mathbb{B}_{\delta}(X)$ denotes the open ball centered at $X$ with radius $\delta$. In addition, $ \mathscr D(x) $ denotes a diagonal matrix generated by vector $ x $, whose dimension shall be clear from the context. Denote $\mathbb Q^{m}$ the set of $m\times m$-dimension unitary orthogonal matrix. Let $ E_i=\mathscr D(e_i)$, where $e_i$ is a unit vector whose $i$th entry is $1$.

For any given  $X,Y \in \mathbb R^{m \times n}$,
the standard inner product of $X$ and $Y$ is denoted by $\langle X, Y\rangle$, that is, $\langle X, Y\rangle=\operatorname{tr}\left(X Y^{T}\right)$, where $\operatorname{tr}(\cdot)$ denotes the trace of a matrix.
The Frobenius norm of $X$ is denoted by $\|X\|_{F},$ namely, $\|X\|_{F}=\sqrt{\operatorname{tr}\left(X X^{T}\right)}$.
Denote $\sigma(X)=\left(\sigma_{1}(X), \ldots, \sigma_{n}(X)\right)^{T}$ and
$$
\mathcal{M}(X)=\left\{(U, V) \in \mathbb Q^{m} \times \mathbb Q^{n}:\,\, X=U \mathscr D\left( \sigma(X)\right)  V^{T}\right\}.
$$

\section{An exact continuous relaxation for \eqref{l0}}
\setcounter{equation}{0}
In this section, we present the relationships between \eqref{l0} and \eqref{lc}. Without specific explanation, Assumptions 1 and 2 are assumed throughout the paper.
\par Assumption 1. $f$ is Lipschitz continuous with Lipschitz constant $L_{f}$.
\par Assumption 2. Positive parameter $\nu$ in \eqref{phi} satisfies $\nu<\bar{\nu}:=\lambda / L_{f}$.

\subsection{Lifted stationary points of \eqref{lc}}
Clearly,  $\phi$ in \eqref{phi} can be rewritten as a DC function, i.e.,
$$
\phi(t)=\frac{t}{\nu}-\max \left\{\theta_{1}(t), \theta_{2}(t)\right\}
$$
with $\theta_{1}(t)=0$ and $\theta_{2}(t)=t / \nu-1$. Denote
\begin{equation}\label{dc}
\mathcal{D}(t)=\left\{i \in\{1,2\}: \theta_{i}(t)=\max \left\{\theta_{1}(t), \theta_{2}(t)\right\}\right\}.	
\end{equation}
Following Theorem 3.7 in \cite{LS05}, the Clarke subdifferential of $\Phi$ at $X$ is given by
$$
\partial \Phi(X)=\left\{U \mathscr{D}(x) V^{T}: x \in \partial \sum_{i=1}^{n} \phi\left(\sigma_i\left(X\right)\right),\,(U, V) \in {\mathcal{M}}(X)\right\},
$$
where $\partial \phi(x)$ is the Clarke subdifferential \cite{C90} of $\phi(x)$. Then we have the following definition.

\begin{defi}\label{def:ls}
We say that $X $ is a lifted stationary point of \eqref{lc} if there exist $d_{i} \in \mathcal{D}\left(\sigma_{i}(X)\right)$ for all $i\in [n]$ such that
\begin{equation}\label{ls}
\lambda  \sum_{i=1}^{n} \theta_{d_{i}}^{\prime}\left(\sigma_{i}(X)\right) E_{i} \in \left\lbrace U^T\partial f(X)V+\frac{\lambda}{\nu} \mathscr{D}\left(\partial \left\|{\sigma}(X)\right\|_1\right):(U, V) \in \mathcal{M}\left(X\right)\right\rbrace,	
\end{equation}
where $ \sigma_i(X) $ is the $i$th largest singular value of $ X $.

If \eqref{ls} holds for all $d_{i} \in \mathcal{D}\left(\sigma_{i}(X)\right),~ i\in[n]$, then we call $X$ a d-stationary point.
\end{defi} 

\subsection{Characterizations of lifted stationary points of \eqref{lc}}
We first show that for any element in $\mathcal{D}\left(\sigma_{i}(X)\right)$ satisfying \eqref{ls} is unique and well defined.
\begin{prop}\label{p_d}
If $\bar{X}$ is a lifted stationary point of \eqref{lc}, then the vector $d^{\bar{X}}= \left(d_{1}^{\bar{X}}, \ldots, d_{n}^{\bar{X}}\right)^{T} \in \prod_{i=1}^{n} \mathcal{D}\left(\sigma_{i}(\bar X)\right)$ satisfying \eqref{ls} is unique. In particular, for $i\in [n]$,
\begin{equation}\label{d}
d_{i}^{\bar{X}}=\left\{\begin{array}{ll}
	1 & \text { if }\sigma_{i}(\bar X)<\nu, \\
	2 & \text { if } \sigma_{i}(\bar X) \geq \nu.
\end{array}\right.	
\end{equation}
\end{prop}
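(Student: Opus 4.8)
The plan is to make the DC structure of $\phi$ completely explicit and then push the inclusion \eqref{ls} down to a single diagonal entry. Since $\theta_{1}(t)=0$ and $\theta_{2}(t)=t/\nu-1$, we have $\max\{\theta_{1}(t),\theta_{2}(t)\}=\max\{0,\,t/\nu-1\}$, so that $\mathcal{D}(t)=\{1\}$ for $0\le t<\nu$, $\mathcal{D}(t)=\{2\}$ for $t>\nu$, and $\mathcal{D}(\nu)=\{1,2\}$. Hence for every index $i$ with $\sigma_{i}(\bar X)\neq\nu$ the set $\mathcal{D}(\sigma_{i}(\bar X))$ is a singleton, so $d_{i}^{\bar X}$ is automatically uniquely determined and already matches \eqref{d}. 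All that remains is to show that at an index $i$ with $\sigma_{i}(\bar X)=\nu$ the choice $d_{i}^{\bar X}=1$ is impossible, so that necessarily $d_{i}^{\bar X}=2$.

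For this I would use the inclusion \eqref{ls} defining a lifted stationary point. Fix $(U,V)\in\mathcal{M}(\bar X)$, $W\in\partial f(\bar X)$ and $s\in\partial\|\sigma(\bar X)\|_{1}$ realizing it, so that
\[
\lambda\sum_{j=1}^{n}\theta_{d_{j}^{\bar X}}^{\prime}\bigl(\sigma_{j}(\bar X)\bigr)\,E_{j} \;=\; U^{T}WV+\frac{\lambda}{\nu}\,\mathscr{D}(s).
\]
Comparing the $(i,i)$ entries of both sides, and using that $(E_{j})_{ii}=\delta_{ij}$ while $\mathscr{D}(s)$ is diagonal with $(i,i)$ entry $s_{i}$, gives the scalar identity
\[
\lambda\,\theta_{d_{i}^{\bar X}}^{\prime}\bigl(\sigma_{i}(\bar X)\bigr)=\bigl(U^{T}WV\bigr)_{ii}+\frac{\lambda}{\nu}\,s_{i}.
\]

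Now suppose $\sigma_{i}(\bar X)=\nu$. Since $\nu>0$, the $i$th coordinate of any subgradient of $\|\cdot\|_{1}$ at $\sigma(\bar X)$ equals $1$, i.e.\ $s_{i}=1$; and if $d_{i}^{\bar X}=1$ then $\theta_{1}^{\prime}(\nu)=0$, so the identity forces $(U^{T}WV)_{ii}=-\lambda/\nu$, whence $\bigl|(U^{T}WV)_{ii}\bigr|=\lambda/\nu$. On the other hand, $U$ and $V$ are orthogonal, so $\|U^{T}WV\|_{F}=\|W\|_{F}$, and since $W$ is a Clarke subgradient of the $L_{f}$-Lipschitz function $f$ one has $\|W\|_{F}\le L_{f}$; therefore $\bigl|(U^{T}WV)_{ii}\bigr|\le L_{f}$. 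Combining the two estimates yields $\lambda/\nu\le L_{f}$, i.e.\ $\nu\ge\lambda/L_{f}=\bar\nu$, contradicting Assumption 2. Hence $d_{i}^{\bar X}=2$ whenever $\sigma_{i}(\bar X)=\nu$, which together with the singleton cases proves \eqref{d}; uniqueness of $d^{\bar X}$ follows because every coordinate is now pinned down. The only slightly delicate points are the bookkeeping that reduces the matrix inclusion \eqref{ls} to the scalar diagonal identity and the (standard) bound $\|W\|_{F}\le L_{f}$ on Clarke subgradients of a Lipschitz function; once these are in place, the contradiction with Assumption 2 is immediate, and this is where the whole argument hinges.
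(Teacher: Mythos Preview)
Your proof is correct and follows essentially the same route as the paper: reduce to the boundary case $\sigma_{i}(\bar X)=\nu$, read off the $(i,i)$ entry of \eqref{ls} to get $(U^{T}WV)_{ii}=-\lambda/\nu$, and contradict Assumption~2 via $|(U^{T}WV)_{ii}|\le\|U^{T}WV\|_{F}=\|W\|_{F}\le L_{f}$. The only difference is cosmetic --- you spell out $s_{i}=1$ and $\theta_{1}'(\nu)=0$ explicitly where the paper writes the resulting scalar equation directly.
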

\begin{proof}
For case of $\sigma_{i}(\bar X) \neq \nu,$ the statement in this proposition follows. Hence, it suffices to consider the index $i$ satisfying $\sigma_{i}(\bar X)=\nu$.

 Now we assume that $d_{i}^{\bar{X}}=1$ by contradiction when $\sigma_{i}(\bar X)=\nu$.  By \eqref{ls}, there exists $\xi(\bar{X}) \in \partial f(\bar{X})$ such that $0=\left( U^T\xi(\bar{X})V\right)_{ii}+\lambda / \nu$, where $(U, V) \in \mathcal{M}\left(\bar X\right)$. Then, $\lambda /\nu = \left| {{{\left( {{U^T}\xi (\bar X)V} \right)}_{ii}}} \right| \le {\left\| {{U^T}\xi (\bar X)V} \right\|_F} = {\left\| {\xi (\bar X)} \right\|_F} \le {L_f}$. This leads to a contradiction to $\nu<\lambda / L_{f}$.  Then, we can assert that $d_{i}^{\bar{X}}=2$, and hence \eqref{d} holds for $\sigma_{i}(\bar X)=\nu$.
\end{proof}

For a given $d=\left(d_{1}, \ldots, d_{n}\right)^{T} \in \mathbb{D}^{n},$ we define
\begin{equation}\label{eq:Phi_d}
\Phi^{d}(X):=\sum_{i=1}^{n}\sigma_{i}(X) / \nu-\sum_{i=1}^{n} \theta_{d_{i}}\left(\sigma_{i}(X)\right).	
\end{equation}
It is easy to see that
$$\Phi(X)=\min_{d \in \mathbb{D}^{n}}\Phi^d(X),~\forall X\in\mathbb{R}^{m\times n}.$$
Furthermore, for a fixed $\bar{X},~ \Phi(\bar X)=\Phi^{d^{\bar{X}}}(\bar X)$ with $d^{\bar X}$ defined in \eqref{d}.
We next show that any local minimizer of \eqref{lc} is a lifted stationary point of the problem.
\begin{theorem}\label{th:lsp}
	Suppose that $\bar X$ is a local minimizer of problem \eqref{lc}. Then $\bar X$ is a lifted stationary point of $\eqref{lc}$, that is, $\eqref{ls}$ holds at $\bar X$.
\end{theorem}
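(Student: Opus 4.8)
The plan is to exploit the DC structure of $\Phi$ via the majorant $\Phi^{d}$ from \eqref{eq:Phi_d} with $d=d^{\bar X}$ as in \eqref{d}. Put $r:=|\{\,i\in[n]:\sigma_i(\bar X)\ge\nu\,\}|$; since the singular values are listed in nonincreasing order, \eqref{d} gives $\{\,i:d_i=2\,\}=\{1,\dots,r\}$. Because $\Phi(X)=\min_{d'\in\mathbb{D}^n}\Phi^{d'}(X)\le\Phi^{d}(X)$ for every $X$, while $\Phi(\bar X)=\Phi^{d}(\bar X)$, the function $\mathcal{F}^{d}:=f+\lambda\Phi^{d}$ satisfies $\mathcal{F}\le\mathcal{F}^{d}$ everywhere and $\mathcal{F}(\bar X)=\mathcal{F}^{d}(\bar X)$; hence the chain $\mathcal{F}^{d}(\bar X)=\mathcal{F}(\bar X)\le\mathcal{F}(X)\le\mathcal{F}^{d}(X)$, valid on a neighborhood of $\bar X$, shows that $\bar X$, being a local minimizer of $\mathcal{F}$, is also a local minimizer of $\mathcal{F}^{d}$.

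Next I would make $\mathcal{F}^{d}$ explicit. Using $\theta_1\equiv 0$, $\theta_2(t)=t/\nu-1$ and $\{\,i:d_i=2\,\}=\{1,\dots,r\}$,
\[
\mathcal{F}^{d}(X)=f(X)+\frac{\lambda}{\nu}\sum_{i=1}^{n}\sigma_i(X)-\lambda\sum_{i=1}^{r}\Big(\frac{\sigma_i(X)}{\nu}-1\Big)=f(X)+\frac{\lambda}{\nu}\|X\|_*-\frac{\lambda}{\nu}G(X)+\lambda r ,
\]
where $G(X):=\sum_{i=1}^{r}\sigma_i(X)$. The decisive point is that $G$ is convex (a sum of the $r$ largest singular values) and that the strict gap $\sigma_r(\bar X)\ge\nu>\sigma_{r+1}(\bar X)$, which persists on a neighborhood of $\bar X$, makes the top-$r$ singular subspaces locally single-valued, so $G$ is in fact $C^{1}$ near $\bar X$ with $\nabla G(\bar X)=U\mathscr{D}(\mathbf{1}_{[r]})V^{T}$ for every $(U,V)\in\mathcal{M}(\bar X)$; here $\mathbf{1}_{[r]}=(1,\dots,1,0,\dots,0)^{T}$ has $r$ ones, this value is independent of the chosen SVD (it is determined by the rank-$r$ truncation of $\bar X$), and one checks directly that it is a subgradient of $G$ at $\bar X$. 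Consequently $f+\frac{\lambda}{\nu}\|\cdot\|_*$ is convex and $-\frac{\lambda}{\nu}G+\lambda r$ is $C^{1}$ near $\bar X$, so the optimality condition $0\in\partial\mathcal{F}^{d}(\bar X)$ together with the exact Clarke sum rule yields
\[
0\in\partial f(\bar X)+\frac{\lambda}{\nu}\,\partial\|\bar X\|_*-\frac{\lambda}{\nu}\,\nabla G(\bar X) .
\]

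Finally I would translate this inclusion into the SVD coordinates of \eqref{ls}. By the description of the Clarke subdifferential of an absolutely symmetric function of singular values (\cite{LS05}, the same formula already quoted for $\partial\Phi$), every $W\in\partial\|\bar X\|_*$ can be written $W=U\mathscr{D}(g)V^{T}$ with $(U,V)\in\mathcal{M}(\bar X)$ and $g\in\partial\|\sigma(\bar X)\|_1$. The inclusion above means $0=\xi+\frac{\lambda}{\nu}W-\frac{\lambda}{\nu}\nabla G(\bar X)$ for some $\xi\in\partial f(\bar X)$ and such a $W$; fixing the associated pair $(U,V)$ and using that $\nabla G(\bar X)=U\mathscr{D}(\mathbf{1}_{[r]})V^{T}$ for that same pair, conjugation by $U^{T}(\cdot)V$ gives
\[
\frac{\lambda}{\nu}\,\mathscr{D}(\mathbf{1}_{[r]})=U^{T}\xi V+\frac{\lambda}{\nu}\,\mathscr{D}(g) .
\]
Since $\theta_1'\equiv 0$, $\theta_2'\equiv 1/\nu$ and $\{\,i:d_i=2\,\}=\{1,\dots,r\}$, the left-hand side is exactly $\lambda\sum_{i=1}^{n}\theta_{d_i}'(\sigma_i(\bar X))E_i$ with $d=d^{\bar X}$; thus \eqref{ls} holds at $\bar X$, that is, $\bar X$ is a lifted stationary point.

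The step I expect to be the main obstacle is the structural fact used in the second paragraph: that $G(X)=\sum_{i=1}^{r}\sigma_i(X)$ is continuously differentiable near $\bar X$ with $\nabla G(\bar X)=U\mathscr{D}(\mathbf{1}_{[r]})V^{T}$. Everything hinges on the strict singular-value gap at index $r$, which is precisely what Assumption 2 together with \eqref{d} guarantees; granted that, the compatibility of the SVD pair between the two terms $\partial\|\bar X\|_*$ and $\nabla G(\bar X)$ is harmless, since $\nabla G(\bar X)$ is the same matrix for all $(U,V)\in\mathcal{M}(\bar X)$.
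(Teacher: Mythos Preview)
Your proof is correct and follows the same two-step skeleton as the paper: first use the majorization $\Phi\le\Phi^{d^{\bar X}}$ with equality at $\bar X$ to pass from local optimality of $\mathcal{F}$ to local optimality of $\mathcal{F}^{d^{\bar X}}$, then read off the first-order condition and rewrite it in the SVD frame to obtain \eqref{ls}. The paper's own argument stops after asserting the optimality condition for $f+\lambda\Phi^{d^{\bar X}}$ in one line; you supply the justification it omits by decomposing $\Phi^{d^{\bar X}}(X)=\tfrac{1}{\nu}\|X\|_*-\tfrac{1}{\nu}G(X)+r$ and invoking the strict gap $\sigma_r(\bar X)\ge\nu>\sigma_{r+1}(\bar X)$ (a consequence of Assumption~2) to make $G$ smooth near $\bar X$, so that the exact Clarke sum rule applies to the convex$+C^1$ splitting. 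This is not a different route but a more careful execution of the same one; the paper does not explicitly invoke the gap here, and your observation that $\nabla G(\bar X)=U\mathscr{D}(\mathbf{1}_{[r]})V^{T}$ is independent of the chosen $(U,V)\in\mathcal{M}(\bar X)$ is exactly what is needed to match the SVD pair coming from $\partial\|\bar X\|_*$.
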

\begin{proof}
	Since $\bar X$ is a local minimizer of \eqref{lc} satisfying \reff{lc}, it gives
\begin{equation}\label{eq:pdx}
\begin{aligned}
f(\bar X) + \lambda {\Phi ^{{d^{\bar X}}}}\left( {\bar X} \right) &= f(\bar X) + \lambda \Phi \left( {\bar X} \right)  \\
&\le f(X) + \lambda \Phi \left( X \right) \le f(X) + \lambda {\Phi ^{{d^{\bar X}}}}\left( X \right),\quad \forall X \in \mathbb{B}_{\varrho}(\bar{X}),
\end{aligned}	
\end{equation}
where the first equality comes from $ \Phi^{d^{\bar{X}}}(\bar X) = \Phi(\bar X)$, $d^{\bar X}$ is defined as in \eqref{d} and the last inequality is due to $\Phi^{d^{\bar{X}}}(X) \geq \Phi(X),\,\forall X \in \mathbb{R}^{m\times n}$. Then $ \bar X $ is a local minimizer of the problem
	\begin{equation}
	\min _{X} f\left(X\right)+\lambda {\Phi ^{{d^{\bar X}}}}\left( X \right).
	\end{equation}
	Hence, there exists some $(\bar{U}, \bar{V}) \in$
	$\mathcal{M}\left(\bar X\right)$ such that
	\begin{equation}\label{}
	0 = \partial f\left( {\bar X} \right) + \lambda {\bar U}\left( {\frac{1}{\nu }{\mathscr{D}}\left( {\partial {{\left\| {\sigma (\bar X)} \right\|}_1}} \right) - \sum\limits_{i = 1}^n {\theta _{{d_i^{\bar X}}}^\prime } \left( {{\sigma _i}(\bar X)} \right){E_i}} \right)\bar V^T,
	\end{equation}
	which implies $\eqref{ls}$ at $\bar X$.
\end{proof}
Now we show a lower bound property of the lifted stationary points of \eqref{lc}, which is similar to Lemma 2.3 in \cite{BC20}. For the ease of the reader, we present the proof here.
\begin{lem}\label{lem:lb}
If $ \bar{X} $ is a lifted stationary point of \eqref{lc}, then it holds that
\begin{equation}\label{eq:lb}
\six \in \left[ 0, \nu\right)  \Rightarrow  \six=0, \quad  i\in[n].	
\end{equation}
\end{lem}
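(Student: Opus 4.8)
The plan is to argue by contradiction: suppose $\bar X$ is a lifted stationary point with some singular value $\sigma_i(\bar X)\in(0,\nu)$, and derive a contradiction with Assumption 2 ($\nu<\lambda/L_f$). The starting point is the stationarity inclusion \eqref{ls} together with the explicit form of $d^{\bar X}$ from Proposition \ref{p_d}. Since $\sigma_i(\bar X)<\nu$, Proposition \ref{p_d} forces $d_i^{\bar X}=1$, so $\theta_{d_i^{\bar X}}(t)=\theta_1(t)=0$ and hence $\theta'_{d_i^{\bar X}}(\sigma_i(\bar X))=0$. Reading off the $(i,i)$ diagonal entry of \eqref{ls} then gives, for suitable $(U,V)\in\mathcal{M}(\bar X)$ and $\xi(\bar X)\in\partial f(\bar X)$,
\[
0=\bigl(U^T\xi(\bar X)V\bigr)_{ii}+\frac{\lambda}{\nu}\, s_i,
\]
where $s_i$ is the $i$th component of the chosen element of $\partial\|\sigma(\bar X)\|_1$.

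The key observation is that $\sigma_i(\bar X)>0$ pins down this subgradient component: since $\|\cdot\|_1$ is differentiable at a positive coordinate, $s_i=1$ (more precisely, $\partial|\sigma_i(\bar X)|=\{1\}$ when $\sigma_i(\bar X)>0$). Thus the displayed equation becomes $\bigl(U^T\xi(\bar X)V\bigr)_{ii}=-\lambda/\nu$, and therefore
\[
\frac{\lambda}{\nu}=\bigl|\bigl(U^T\xi(\bar X)V\bigr)_{ii}\bigr|\le\bigl\|U^T\xi(\bar X)V\bigr\|_F=\|\xi(\bar X)\|_F\le L_f,
\]
where the second inequality uses that a single matrix entry is bounded in absolute value by the Frobenius norm, the middle equality uses that $U,V$ are orthogonal, and the last inequality is Assumption 1. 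This contradicts $\nu<\lambda/L_f$, so $\sigma_i(\bar X)\in(0,\nu)$ is impossible; combined with the trivial fact that $\sigma_i(\bar X)=0$ is the only remaining possibility in $[0,\nu)$, this is exactly \eqref{eq:lb}.

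The argument is essentially the same one already used inside the proof of Proposition \ref{p_d} for the boundary case $\sigma_i(\bar X)=\nu$, so there is no serious obstacle; the only point requiring care is the bookkeeping with the Clarke subdifferential of $\|\sigma(\cdot)\|_1$ — one must make sure that the specific element appearing in \eqref{ls} has its $i$th coordinate equal to $1$ whenever $\sigma_i(\bar X)>0$, which follows because $|\cdot|$ is differentiable there and the Clarke subdifferential of a function of the singular values respects this coordinatewise. I would state this as a one-line justification rather than expand it. One should also note at the outset that the implication in \eqref{eq:lb} is vacuous unless such an index exists, so no generality is lost in fixing one such $i$.
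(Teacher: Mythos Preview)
Your argument is correct and essentially identical to the paper's own proof: both proceed by contradiction, use $d_i^{\bar X}=1$ when $\sigma_i(\bar X)\in(0,\nu)$ to read off the $(i,i)$ entry of \eqref{ls} as $(U^T\xi(\bar X)V)_{ii}+\lambda/\nu=0$, and then bound $\lambda/\nu\le\|\xi(\bar X)\|_F\le L_f$ to contradict Assumption~2. The only cosmetic difference is that you invoke Proposition~\ref{p_d} for $d_i^{\bar X}=1$, whereas the paper simply notes that $\sigma_i(\bar X)<\nu$ forces $\mathcal{D}(\sigma_i(\bar X))=\{1\}$ directly; either justification is fine.
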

\begin{proof}
Suppose $\bar{X}$ is a lifted stationary point of \eqref{lc}. Assume that $\six \in(0, \nu)$ for some $i \in[n] .$ Then, $d_{i}^{\bar{X}}=1$. By Definition \ref{def:ls}, there exists $\xi(\bar{X}) \in \partial f(\bar{X})$ such that $0=\left( U^T\xi(\bar{X})V\right)_{ii}+\lambda / \nu$, where $(U, V) \in \mathcal{M}\left(\bar X\right)$. Then, $\lambda /\nu = \left| {{{\left( {{U^T}\xi (\bar X)V} \right)}_{ii}}} \right| \le {\left\| {{U^T}\xi (\bar X)V} \right\|_F} = {\left\| {\xi (\bar X)} \right\|_F} \le {L_f}$, which leads to a contradiction to $\nu<\lambda / L_{f}$. Thus, for any $i \in[n],\, \six \in\left[0, \nu\right) $ implies that $\six=0$.
\end{proof}

\subsection{Relationship between \eqref{l0} and \eqref{lc}}
This subsection  presents the relationship between problem \eqref{l0} and its continuous relaxation \eqref{lc}. According to  the lower bound property of the lifted stationary points of \eqref{lc} in Lemma \ref{lem:lb}, we are ready to link \eqref{l0} and \eqref{lc} by the  following two results.

\begin{theorem}\label{tho:gm}
$\bar{X} $ is a global minimizer of \eqref{l0} if and only if it is a global minimizer of \eqref{lc}. Moreover, problems \eqref{l0} and \eqref{lc} have the same optimal value.
\end{theorem}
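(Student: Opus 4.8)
The plan is to compare the two objective functions pointwise and then use the lower bound property (Lemma~\ref{lem:lb}) to show that at a global minimizer of \eqref{lc} the relaxation $\Phi$ agrees exactly with the rank. First I would record the elementary inequality
\[
\Phi(X)=\sum_{i=1}^{n}\phi\bigl(\sigma_{i}(X)\bigr)\le \|\sigma(X)\|_{0}=\op{rank}(X),\qquad \forall\, X\in\mathbb{R}^{m\times n},
\]
which holds because $\phi(0)=0$ and $0\le\phi(t)\le 1$ for all $t\ge 0$; moreover, equality holds at a given $X$ if and only if every \emph{nonzero} singular value of $X$ satisfies $\sigma_{i}(X)\ge\nu$, so that $\phi(\sigma_{i}(X))=1$. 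Consequently $\F(X)\le\F_{l_0}(X)$ for all $X$, and in particular $\min\F\le\min\F_{l_0}$.

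Next I would take a global minimizer $X^{\ast}$ of \eqref{lc} (such a point exists under the standing well-posedness/coercivity of the model, which is also what makes the stated equivalence non-vacuous). Being a global minimizer, $X^{\ast}$ is a local minimizer of \eqref{lc}, hence by Theorem~\ref{th:lsp} it is a lifted stationary point, and then Lemma~\ref{lem:lb} gives $\sigma_{i}(X^{\ast})\in[0,\nu)\Rightarrow\sigma_{i}(X^{\ast})=0$ for every $i\in[n]$. Thus every nonzero singular value of $X^{\ast}$ is at least $\nu$, so by the equality case above $\Phi(X^{\ast})=\op{rank}(X^{\ast})$, i.e.\ $\F(X^{\ast})=\F_{l_0}(X^{\ast})$. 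Combining this with the previous step yields the chain
\[
\min\F=\F(X^{\ast})=\F_{l_0}(X^{\ast})\ \ge\ \min\F_{l_0}\ \ge\ \min\F,
\]
where the last inequality is the pointwise bound $\F\le\F_{l_0}$. Hence all quantities are equal: \eqref{l0} and \eqref{lc} have the same optimal value, and $X^{\ast}$ is simultaneously a global minimizer of \eqref{l0}.

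Since $X^{\ast}$ above is an arbitrary global minimizer of \eqref{lc}, this already proves the implication ``global minimizer of \eqref{lc} $\Rightarrow$ global minimizer of \eqref{l0}'' as well as the equality of optimal values. For the converse, if $\bar X$ is a global minimizer of \eqref{l0}, then $\F(\bar X)\le\F_{l_0}(\bar X)=\min\F_{l_0}=\min\F$, so $\bar X$ attains the minimum of $\F$ and is therefore a global minimizer of \eqref{lc}. This closes the equivalence.

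The pointwise comparison $\F\le\F_{l_0}$ and the final chain of inequalities are routine; the crux is the second step, where Assumption~2 ($\nu<\lambda/L_{f}$) enters — through Theorem~\ref{th:lsp} and Lemma~\ref{lem:lb} — to force the exactness $\Phi(X^{\ast})=\op{rank}(X^{\ast})$ at any minimizer of \eqref{lc}. The only delicate point is the existence of a global minimizer of \eqref{lc}: without a coercivity hypothesis one cannot simply pass to a minimizing sequence, since lifted stationarity is not available along such a sequence, so I expect the intended argument (as above) to take existence of a minimizer for granted.
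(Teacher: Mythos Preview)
Your proof is correct and follows essentially the same approach as the paper: both use the pointwise inequality $\Phi(X)\le\|\sigma(X)\|_{0}$, invoke Theorem~\ref{th:lsp} to get lifted stationarity at a global minimizer of \eqref{lc}, and then apply Lemma~\ref{lem:lb} to conclude $\Phi=\op{rank}$ there. Your organization is slightly cleaner (you first pin down the equality of optimal values via a sandwich, then read off both implications, whereas the paper proves the forward direction directly and the converse by contradiction), and your remark on existence of a minimizer is apt---the paper, like you, simply takes it for granted.
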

\begin{proof}
Let $\bar{X}$ be a global minimizer of \eqref{lc}, then $\bar{X}$ is a lifted stationary point of \eqref{lc} from Theorem \ref{th:lsp}. By \eqref{eq:lb}, it follows $\Phi(\bar X)=\|\sx\|_{0}$. Then,
\begin{equation*}
\begin{aligned}
f(\bar{X})+\lambda\|\sx\|_{0}&=f(\bar{X})+\lambda \Phi(\bar X) \\
&\leq f(X)+\lambda \Phi(X)	\\
& \leq f(X)+\lambda\|\s\|_{0}, 	
\end{aligned}
\end{equation*}
where the last inequality comes from $\Phi(X) \leq\|\s\|_{0},~ \forall X \in \mathbb{R}^{m\times n}$. Thus, $\bar{X}$ is a global minimizer of \eqref{l0}.

Next, suppose $\bar{X}$ is a global minimizer of \eqref{l0} but not a global minimizer of \eqref{lc}. Assume that $\hat X$ is a global minimizer of \eqref{lc} satisfying
$$
f(\hat{X})+\lambda \Phi(\hat X)<f(\bar{X})+\lambda \Phi(\bar X).
$$ As shown earlier, $\Phi(\hat X)=\|\sigma(\hat X)\|_{0}$.
Together with  $\Phi(\bar X) \leq\|\sx\|_{0}$, we have $f(\hat{X})+\lambda\|\shx\|_{0}<f(\bar{X})+\lambda\|\sx\|_{0},$ which leads
to a contradiction. Thus, any global minimizer of \eqref{l0} must be a global minimizer of \eqref{lc}.

Lemma \ref{lem:lb} ensures that problems \eqref{l0} and \eqref{lc} have the same optimal value.
\end{proof}
%
%
\begin{prop}\label{pro:lm}
If $\bar{X}$ is a local minimizer of \eqref{lc}, then it is a local minimizer of \eqref{l0}, and the objective functions of (\ref{l0}) and (\ref{lc}) have the same value at $\bar{X},$ i.e., $\mathcal{F}_{\ell_{0}}(\bar{X})=$ $\mathcal{F}(\bar{X})$.
\end{prop}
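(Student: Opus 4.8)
The plan is to reduce everything to the lower bound property already proved. Since $\bar X$ is a local minimizer of \eqref{lc}, Theorem \ref{th:lsp} shows $\bar X$ is a lifted stationary point, so Lemma \ref{lem:lb} applies: every singular value of $\bar X$ lying in $[0,\nu)$ must vanish, equivalently each nonzero singular value of $\bar X$ is at least $\nu$. The one substantive observation I would record is that this forces $\Phi(\bar X)=\|\sigma(\bar X)\|_0$. Indeed, for $t\ge 0$ the capped-$\ell_1$ function satisfies $\phi(t)=\min\{1,t/\nu\}\le \mathbf{1}_{\{t\neq 0\}}$, with equality precisely when $t=0$ or $t\ge\nu$; summing over $i\in[n]$ gives both the pointwise bound $\Phi(X)\le\|\sigma(X)\|_0$ valid for every $X\in\mathbb R^{m\times n}$, and, since each $\sigma_i(\bar X)$ falls into one of those two cases, the equality $\Phi(\bar X)=\|\sigma(\bar X)\|_0$.

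From the equality at $\bar X$ the second assertion is immediate:
$$\mathcal F(\bar X)=f(\bar X)+\lambda\Phi(\bar X)=f(\bar X)+\lambda\|\sigma(\bar X)\|_0=\mathcal F_{\ell_0}(\bar X).$$
For the first assertion, I would pick $\varrho>0$ with $\mathcal F(X)\ge\mathcal F(\bar X)$ for all $X\in\mathbb B_\varrho(\bar X)$, which exists by local optimality of $\bar X$ for \eqref{lc}. Then for every such $X$,
$$\mathcal F_{\ell_0}(X)=f(X)+\lambda\|\sigma(X)\|_0\ge f(X)+\lambda\Phi(X)=\mathcal F(X)\ge\mathcal F(\bar X)=\mathcal F_{\ell_0}(\bar X),$$
where the first inequality uses $\Phi\le\|\sigma(\cdot)\|_0$ pointwise and the final equality is what was just established. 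Hence $\bar X$ is a local minimizer of \eqref{l0}, and in fact the very same neighborhood $\mathbb B_\varrho(\bar X)$ works, so no shrinking is required.

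This argument parallels the global statement in Theorem \ref{tho:gm}, localized to a ball; the only real content is the identification $\Phi(\bar X)=\|\sigma(\bar X)\|_0$, which is exactly the payoff of the lower bound property, so I do not expect a genuine obstacle. The one point to state carefully is that "local minimizer of \eqref{l0}" is meant with the discrete rank penalty, but since the displayed chain of inequalities holds for arbitrary $X$ near $\bar X$, nothing extra is needed there.
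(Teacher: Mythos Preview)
Your proof is correct and follows essentially the same approach as the paper, which explicitly states that the argument is ``similar to the first part of Theorem~\ref{tho:gm}'' and omits the details. You have localized that global argument to a ball exactly as intended: invoke Theorem~\ref{th:lsp} and Lemma~\ref{lem:lb} to obtain $\Phi(\bar X)=\|\sigma(\bar X)\|_0$, then chain $\mathcal F_{\ell_0}(X)\ge\mathcal F(X)\ge\mathcal F(\bar X)=\mathcal F_{\ell_0}(\bar X)$ on the neighborhood of local optimality.
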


The proof is similar to the first part of Theorem \ref{tho:gm}
and hence we omit it here.

To end this subsection, we present Figure \ref{fig:link} to summarize the relationship between problems \eqref{l0} and \eqref{lc}.
 \begin{figure}[htbp]
	\centering
	\begin{subfigure}[b]{1\linewidth}
		\begin{subfigure}[b]{\linewidth}
			\centering
			\includegraphics[width=\linewidth]{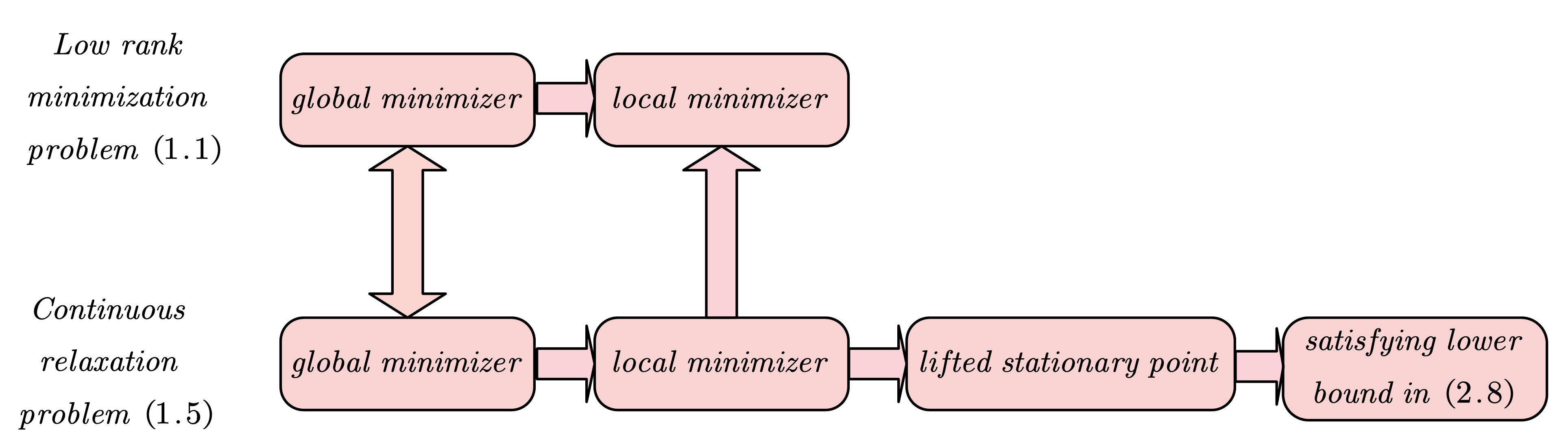}
		\end{subfigure}  	
	\end{subfigure}
	\vfill
	\caption{Links between problems \eqref{l0} and \eqref{lc}.}
	\label{fig:link}
\end{figure}
$
$

\section{Numerical algorithm and its convergence analysis}

In this section, we establish a numerical algorithm  to find a lifted stationary point of \eqref{lc}. We first introduce some useful preliminary results on smoothing methods and the proximal gradient algorithm, then we propose  a proximal gradient algorithm based on the smoothing method. After that we present  the convergence of the proposed algorithm.

\subsection{Smoothing approximation method and proximal gradient method}
 Smoothing approximation method is a common-used numerical method for solving nonsmooth optimization problems. For more details, see \cite{NW06} and references therein. For the sake of completeness, we recall a class of smoothing functions for $ f(X) $ in \eqref{lc}.

\begin{defi}\label{def:smoo}
We call $\tilde{f}: \mathbb{R}^{m\times n} \times[0, \bar{\mu}] \rightarrow \mathbb{R}$ with $\bar{\mu}>0$ a smoothing function of the convex function $f$ in \eqref{lc}, if $\tilde{f}(\cdot, \mu)$ is continuously differentiable in $\mathbb{R}^{m\times n}$ for any fixed $\mu>0$ and satisfies the following conditions:
\begin{itemize}
	\item [(i)] $\lim \limits_{X \rightarrow \bar X, \mu \downarrow 0} \tilde{f}(X, \mu)=f(\bar X),~ \forall \bar X \in \mathbb{R}^{m\times n}$;
	\item [(ii)] (convexity) $\tilde{f}(X, \mu)$ is convex with respect to $X$ for any fixed $\mu>0$;
	\item [(iii)] (gradient consistency) $\left\{\lim\limits _{Z \rightarrow X, \mu \downarrow 0} \nabla_{Z} \tilde{f}(Z, \mu)\right\} \subseteq \partial f(X),~ \forall X \in \mathbb{R}^{m\times n}$;
	\item [(iv)] ( $ \tilde{f}\left(X, \cdot\right) $ Lipschitz continuity with respect to $\mu$) there exists a positive constant $\kappa$ such that
	$$
	\left|\tilde{f}\left(X, \mu_{2}\right)-\tilde{f}\left(X, \mu_{1}\right)\right| \leq \kappa\left|\mu_{1}-\mu_{2}\right|,~\forall X \in \mathbb{R}^{m\times n},~ \mu_{1}, \mu_{2} \in[0, \bar{\mu}];
	$$
	\item [(v)] ($\nabla_{X} \tilde{f}(\cdot, \mu)$ Lipschitz continuity with respect to $X$) there exists a constant $L>0$ such that for any $\mu \in(0, \bar{\mu}]$, $\nabla_{X} \tilde{f}(\cdot, \mu)$ is Lipschitz continuous with Lipschitz constant $L \mu^{-1}$.
\end{itemize}
\end{defi}

Throughout this paper, we denote $\tilde{f}(X,\mu)$ a smoothing function of $f(X)$ in \eqref{lc}. For convenience of notation, the gradient of $\tilde{f}(X, \mu)$ with respect to $X$ is denoted as $\nabla \tilde{f}(X, \mu) $. Furthermore, Definition \ref{def:smoo}-(iv) indicates that
\begin{equation}\label{eq:kapmu}
|\tilde{f}(X, \mu)-f(X)| \leq \kappa \mu, \quad \forall X \in \mathbb{R}^{m\times n},\, 0<\mu \leq \bar{\mu}.	
\end{equation}
Some smoothing functions of the $l_1$ loss function in \eqref{l1-loss} can be found in Example 3.1 of \cite{BC20} and we omit it here.

Some notations are listed here.
$$
\tilde{\mathcal{F}}^{d}(X, \mu) \triangleq \tilde{f}(X, \mu)+\lambda \Phi^{d}(X) \quad \text { and } \quad \tilde{\mathcal{F}}(X, \mu) \triangleq \tilde{f}(X, \mu)+\lambda \Phi(X),
$$
where $\tilde{f}$ is a smoothing function of $f$, $\mu>0$ and $d \in \mathbb{D}^{n}$. For any fixed $\mu>0$ and $d \in \mathbb{D}^{n}$, both $\tilde{\mathcal{F}}^{d}(X, \mu)$ and $\tilde{\mathcal{F}}(X, \mu)$ are nonconvex. Moreover,  $\tilde{\mathcal{F}}^{d}(X, \mu)$ is smooth, but $\tilde{\mathcal{F}}(X, \mu)$ is nonsmooth. Moreover,
\begin{equation}\label{eq:dlg}
	\tilde{\mathcal{F}}^{d}(X, \mu) \geq \tilde{\mathcal{F}}(X, \mu), \quad \forall d \in \mathbb{D}^{n},\, X \in \mathbb{R}^{m\times n},\, \mu \in(0, \bar{\mu}].
\end{equation}

Now we are ready  to recall some preliminaries on proximal gradient method.

Similar to the analysis in Subsection 3.2 of \cite{BC20}, we have a closed-form solution  to proximal operator of $\tau \Phi^{d}$ as follows.
\begin{lem}\label{the:pro-gra-vec}
For any given vectors $d \in \mathbb{D}^{n},\, w \in \mathbb{R}^{n}_+$, and a positive number $\tau>0,$ the proximal operator of $\bm{prox}_{\tau\Phi^d}(w)$ has a closed-form solution; i.e.,
\begin{equation}\label{}
\hat{x}=\bm{prox}_{\tau\Phi^d}(w):=\arg \min _{x \in \mathbb{R}^n_+}\left\{\tau \Phi^{d}(\mathscr D(x))+\frac{1}{2}\|x-w\|_F^{2}\right\}	
\end{equation}
can be calculated by
\begin{equation}\label{eq:y}
\hat{x}_{i}=\max\{\bar{w}_{i}-\tau / \nu,0\}, \quad i\in [n],		
\end{equation}
where
\[\bar w_i=\left\{\begin{array}{ll}w_i &\mbox{if} \,\, d_i=1,\\
w_i+\frac{\tau}{\nu}&\mbox{if} \,\, d_i=2.\end{array}\right.\]
\end{lem}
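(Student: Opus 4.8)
The plan is to separate the optimization over $x \in \mathbb{R}^n_+$ into $n$ independent scalar problems, and solve each scalar problem in closed form by a case analysis on $d_i \in \{1,2\}$. First I would note that, by the definition \eqref{eq:Phi_d}, for a diagonal matrix $\mathscr D(x)$ with $x \ge 0$ one has $\sigma_i(\mathscr D(x)) = x_{[i]}$ (the $i$-th largest entry of $x$), but since $\Phi^d$ only depends on $x$ through its sorted singular values and $\theta_{d_i}$ is applied summed over all $i$, I should first argue that the objective decouples; more carefully, since $\phi$ is nondecreasing and $\Phi^d(\mathscr D(x)) = \sum_i x_i/\nu - \sum_i \theta_{d_i}(x_{[i]})$, one checks that the minimizer can be taken coordinatewise and the contribution of coordinate $i$ to $\tau\Phi^d(\mathscr D(x))$ is $\tau x_i/\nu - \tau\theta_{d_i}(x_i)$ (with an appropriate pairing argument for the sorted indices — this is where I would cite the analysis in Subsection 3.2 of \cite{BC20}). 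Thus it suffices to minimize, for each $i$,
\begin{equation*}
g_i(x_i) := \tau \frac{x_i}{\nu} - \tau\,\theta_{d_i}(x_i) + \frac{1}{2}(x_i - w_i)^2, \qquad x_i \ge 0.
\end{equation*}

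Next I would carry out the two cases. If $d_i = 1$, then $\theta_1 \equiv 0$, so $g_i(x_i) = \tau x_i/\nu + \frac12(x_i - w_i)^2$, a strictly convex quadratic on $[0,\infty)$ whose unconstrained minimizer is $w_i - \tau/\nu$; projecting onto $\mathbb{R}_+$ gives $\hat x_i = \max\{w_i - \tau/\nu, 0\}$, which matches \eqref{eq:y} with $\bar w_i = w_i$. If $d_i = 2$, then $\theta_2(x_i) = x_i/\nu - 1$, so $\tau x_i/\nu - \tau\theta_2(x_i) = \tau$, a constant, and $g_i(x_i) = \tau + \frac12(x_i - w_i)^2$; since $w_i \ge 0$, the minimizer over $[0,\infty)$ is $\hat x_i = w_i = \max\{w_i + \tau/\nu - \tau/\nu, 0\}$, matching \eqref{eq:y} with $\bar w_i = w_i + \tau/\nu$. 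Assembling the coordinates yields the claimed closed form.

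The main obstacle I anticipate is not any of the scalar calculus, which is elementary, but the decoupling/sorting step: one must verify that minimizing $\tau\Phi^d(\mathscr D(x)) + \frac12\|x-w\|_F^2$ over $x \in \mathbb{R}^n_+$ genuinely reduces to $n$ separate one-dimensional problems, despite $\Phi^d$ being defined through the \emph{sorted} entries of $x$ while $\theta_{d_i}$ carries an index $i$ tied to that sorted order. The cleanest route is: first observe that the objective is invariant under permutations if we also permute $d$, and that for the relevant $d$ arising in the algorithm $d$ is already aligned with the descending order of $w$ (as is the case when $w = \sigma(\cdot)$); alternatively, one shows directly that any optimal $\hat x$ must inherit the same ordering as $w$, so replacing $x_{[i]}$ by $x_i$ in the formula for $\Phi^d$ is legitimate at the optimum. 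Once this alignment is in place, the sum $\sum_i \bigl(\tau x_i/\nu - \tau\theta_{d_i}(x_i) + \frac12(x_i-w_i)^2\bigr)$ is manifestly separable and the case analysis above finishes the proof. I would present this by referencing the parallel treatment in \cite{BC20} and then giving the two-case scalar computation in full.
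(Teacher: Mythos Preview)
The paper does not prove this lemma at all: it simply prefaces the statement with ``Similar to the analysis in Subsection~3.2 of \cite{BC20}'' and moves on. Your two-case scalar computation is correct and more explicit than anything in the paper, so in that sense your proposal already exceeds what the paper provides.

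Your instinct about the sorting issue is not just a technicality---it is the only real content here, and it is something the paper glosses over. As literally stated, $\Phi^d(\mathscr D(x))$ involves $\sigma_i(\mathscr D(x))=x_{[i]}$, the $i$th largest entry of $x$, so the objective is \emph{not} separable for arbitrary $d\in\mathbb D^n$ and $w\in\mathbb R^n_+$, and the coordinate-wise formula \eqref{eq:y} can fail in that generality (take $n=2$, $\tau=\nu=1$, $d=(1,2)$, $w=(1,10)$: the formula gives $(0,10)$ with objective value $11.5$, but $(1,9)$ has value $10.5$). What makes the lemma true in the paper's setting is precisely the alignment you identify: in the only place the lemma is used (Theorem~\ref{the:pro-gra-m}), $w$ is a singular-value vector, hence $w_1\ge\cdots\ge w_n\ge 0$, and $d=d^{X^k}$ is nonincreasing by construction (indeed the proof of Theorem~\ref{the:pro-gra-m} begins by recording ``$d_1\ge d_2\ge\cdots\ge d_n$''). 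Under these hypotheses the optimal $\hat x$ inherits the same ordering as $w$, so $x_{[i]}=x_i$ at the optimum and the problem genuinely decouples. Your plan to state and use this alignment explicitly (or, equivalently, to read $\Phi^d(\mathscr D(x))$ coordinate-wise as in \cite{BC20}) is the right fix; just make the extra hypothesis part of the lemma rather than leaving it implicit.
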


\begin{theorem}\label{the:pro-gra-m}
For given $W \in \re^{m \times n}$ and $\tau >0$, let $U\mathscr D(w)V^T$ be the singular value decomposition of $W$ and $\hat{x}=\bm{prox}_{\tau\Phi^d}(w)$. Then $ \hat{x}_1 \ge \hat{x}_2 \ge \ldots \ge\hat{x}_n\ge0 $
and $\hat{X}=U\mathscr D(\hat{x}) V^T$ is an optimal solution of the problem
\begin{equation}\label{eq:pro-gra}
 \min _{X}\left\{\tau \Phi^{d}(X)+\frac{1}{2}\|X-W\|_F^{2}\right\}.	
\end{equation}

\end{theorem}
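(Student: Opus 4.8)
The plan is to reduce the matrix problem \eqref{eq:pro-gra} to the vector problem solved in Lemma \ref{the:pro-gra-vec} via a von Neumann--type trace inequality, exactly as one proves that the proximal map of a unitarily invariant function acts on singular values. First I would observe that $\Phi^{d}$, as defined in \eqref{eq:Phi_d}, is a function of the singular values of $X$ only; write $\Phi^{d}(X) = g_d(\sigma(X))$ where $g_d(x) = \sum_{i=1}^n x_i/\nu - \sum_{i=1}^n \theta_{d_i}(x_i)$ for $x \in \mathbb{R}^n_+$ extended appropriately (note each $\theta_{d_i}$ is affine, so $g_d$ is in fact affine on $\mathbb{R}^n_+$, hence the vector subproblem is the minimization of an affine-plus-quadratic function over the nonnegative orthant, which is why the closed form in \eqref{eq:y} holds).

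Next I would handle the ordering claim $\hat{x}_1 \ge \cdots \ge \hat{x}_n \ge 0$. Since $w = \sigma(W)$ is already nonincreasing and nonnegative, and since the map $w_i \mapsto \hat{x}_i$ in \eqref{eq:y}--together with the definition of $\bar w_i$--is nondecreasing in $w_i$ for each fixed value $d_i$, the only subtlety is that different coordinates may have different $d_i \in \{1,2\}$. Here I would invoke Proposition \ref{p_d}/Lemma \ref{lem:lb}-style monotonicity: in the relevant situation $d_i$ is determined by whether $\sigma_i$ lies above or below $\nu$, so larger singular values get $d_i = 2$ (shift up by $\tau/\nu$ then soft-threshold) and smaller ones get $d_i = 1$; in either branch the composite map is monotone and the shift only helps preserve the order. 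Actually, to keep the statement self-contained I would argue directly: for any $d \in \mathbb{D}^n$ one checks $\hat x_i - \hat x_{i+1} \ge 0$ by splitting into the four cases $(d_i,d_{i+1}) \in \{1,2\}^2$ and using $w_i \ge w_{i+1}$ together with the explicit formula; the case $(d_i,d_{i+1})=(1,2)$ is the only one needing a small extra argument, and there one notes $\hat x_i = \max\{w_i - \tau/\nu,0\} \ge \max\{w_{i+1},0\} \ge \hat x_{i+1}$ is false in general, so in fact the ordering must be read together with how $d$ arises from the algorithm; I would therefore state and use that along the iterates $d$ is chosen compatibly with the descending order of $w$, making the case $(d_i,d_{i+1})=(2,1)$ the generic one and $(1,2)$ impossible.

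Then comes the core step: showing $\hat{X} = U\mathscr{D}(\hat x)V^T$ solves \eqref{eq:pro-gra}. Expanding $\frac12\|X-W\|_F^2 = \frac12\|X\|_F^2 - \langle X, W\rangle + \frac12\|W\|_F^2$ and $\|X\|_F^2 = \|\sigma(X)\|_F^2$, the objective becomes $\tau g_d(\sigma(X)) + \frac12\|\sigma(X)\|_F^2 - \langle X, W\rangle + \frac12\|W\|_F^2$. By von Neumann's trace inequality, $\langle X, W\rangle \le \sum_{i=1}^n \sigma_i(X)\sigma_i(W) = \langle \sigma(X), \sigma(W)\rangle$, with equality when $X$ and $W$ share a simultaneous singular value decomposition, i.e.\ $X = U\mathscr{D}(\sigma(X))V^T$ with the same $(U,V)$. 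Hence for every $X$,
\begin{equation*}
\tau \Phi^d(X) + \tfrac12\|X-W\|_F^2 \ge \tau g_d(\sigma(X)) + \tfrac12\|\sigma(X)-\sigma(W)\|_F^2 \ge \tau g_d(\hat x) + \tfrac12\|\hat x - w\|_F^2,
\end{equation*}
where the last inequality is Lemma \ref{the:pro-gra-vec} applied to the vector $w = \sigma(W)$. Since the right-hand side is exactly the value of the matrix objective at $\hat X = U\mathscr{D}(\hat x)V^T$ (equality in von Neumann holds by construction, and $\sigma(\hat X) = \hat x$ precisely because $\hat x$ is nonincreasing and nonnegative, which is where the ordering claim is used), $\hat X$ is optimal.

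The main obstacle I anticipate is the ordering claim $\hat x_1 \ge \cdots \ge \hat x_n \ge 0$ and its role in identifying $\sigma(\hat X)$ with $\hat x$: without it, $U\mathscr{D}(\hat x)V^T$ need not have $\hat x$ as its ordered singular value vector and the equality case of von Neumann's inequality would not be attained at the claimed point. Everything else (the trace inequality, the affine structure of $\Phi^d$ in the singular values, and the reduction to Lemma \ref{the:pro-gra-vec}) is routine. I would therefore spend most of the write-up pinning down why, for the vectors $d$ that occur, the map $w \mapsto \hat x$ preserves the nonincreasing order, likely by recording a short monotonicity lemma about the one-dimensional maps $s \mapsto \max\{s - \tau/\nu, 0\}$ and $s \mapsto \max\{s,0\}$ and the admissible pairings of $d_i$ dictated by $d^{\bar X}$ in \eqref{d}.
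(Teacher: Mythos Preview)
Your proposal is correct. For the ordering claim $\hat x_1\ge\cdots\ge\hat x_n\ge 0$, you and the paper proceed identically: a case split on $(d_i,d_{i+1})$, with the problematic case $(1,2)$ excluded because $d$ is monotone nonincreasing. The paper simply asserts ``From $w_1\ge\cdots\ge w_n\ge 0$, it is clear that $d_1\ge\cdots\ge d_n$'' and then runs the three remaining cases; you are more careful in tracing this monotonicity back to the definition $d=d^{X^k}$ in \eqref{d} (it comes from the ordering of $\sigma(X^k)$, not from $w$ itself, so both your phrasing ``compatibly with the descending order of $w$'' and the paper's are slightly imprecise, but the conclusion is the same).

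For the optimality of $\hat X=U\mathscr D(\hat x)V^T$, you and the paper diverge. The paper dispatches this in a single sentence by invoking \cite[Proposition~2.1]{LZL15}. Your route---expand $\tfrac12\|X-W\|_F^2$, apply von Neumann's trace inequality $\langle X,W\rangle\le\langle\sigma(X),\sigma(W)\rangle$, reduce to the vector problem of Lemma~\ref{the:pro-gra-vec}, and check equality at $\hat X$---is a self-contained alternative that avoids the external citation. It also makes explicit exactly why the ordering of $\hat x$ is needed (so that $\sigma(\hat X)=\hat x$ and hence $\Phi^d(\hat X)=g_d(\hat x)$, which matters because $g_d$ is \emph{not} symmetric in its arguments). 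The cost is a few more lines; the gain is transparency and independence from a result whose hypotheses (typically stated for absolutely symmetric spectral functions) the reader might otherwise want to verify for this asymmetric $g_d$.
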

\begin{proof}
From $ w_1\ge w_2\ge \ldots \ge w_n\ge 0$, it is clear that  $ d_1\ge d_2\ge \ldots \ge d_n $. Next, we will prove that $ \hat{x}_1\ge\hat{x}_2\ge\ldots\geq\hat x_n\geq 0 $. We split the proof into three cases.

Case 1. $ d_i= d_{i+1}=2$. By \eqref{eq:y}, it holds
$$ \hat{x}_i=w_{i}\ge w_{i+1}=\hat{x}_{i+1}\ge0. $$

Case 2.  $ d_i=2 $ and $ d_{i+1}=1 $. By \eqref{eq:y}, it holds
$$ \hat{x}_i=w_{i}\ge\max\left\lbrace w_{i+1}-\tau/\nu,0\right\rbrace=\hat{x}_{i+1}\ge0. $$

Case 3.  $ d_i=d_{i+1}=1 $. By \eqref{eq:y}, it holds
$$ \hat{x}_i=\max\left\lbrace w_{i}-\tau/\nu,0\right\rbrace \ge\max\left\lbrace w_{i+1}-\tau/\nu,0\right\rbrace=\hat{x}_{i+1}\ge0. $$
Combining with all cases, the non-increasing of $\hat x_i$ is asserted.

Invoking by \cite[Proposition 2.1]{LZL15} with $F(X)=\tau \Phi^{d}(X),\, \phi(t)=t^{2}/2$, $\|\cdot\|=\|\cdot\|_{F}$ and using the fact that $\hat{x}$ is an optimal solution of $\bm{prox}_{\tau\Phi^d}(w)$, it is concluded that $\hat{X}=U \mathscr D\left(\hat{x}\right) V^{T}$ is an optimal solution of \eqref{eq:pro-gra}.
\end{proof}
Before we end this subsection, we consider the following approximation of $\tilde{\mathcal{F}}^{d}(\cdot, \mu)$ on a given matrix $Z$
\begin{equation}\label{eq:Q}
Q_{d, \gamma}(X, Z, \mu)=\tilde{f}(Z, \mu)+\langle X-Z, \nabla \tilde{f}(Z, \mu)\rangle+\frac{1}{2} \gamma \mu^{-1}\|X-Z\|_F^{2}+\lambda \Phi^{d}(X)	
\end{equation}
with a constant $\gamma>0$.  Then, minimization problem $\min_X Q_{d, \gamma}(X, Z, \mu)$ has a closed form, denoted by $\hat{X}$, which can be calculated by Theorem \ref{the:pro-gra-m} with $\tau=\lambda \gamma^{-1} \mu$ and $W=Z-\gamma^{-1} \mu \nabla \tilde{f}(Z, \mu)$.
\subsection{SPG algorithm}
In this subsection, a proximal gradient algorithm based on the smoothing method, denoted by SPG for simplicity,  will be established for finding a lifted stationary point of \eqref{lc}.

The
following assumptions are needed in the convergence analysis of the SPG algorithm:
\begin{itemize}
	\item (A1) Assumption 1 and Assumption 2 hold;
	\item (A2) $\tilde{f}$ is a smoothing function of $f$ defined in Definition \ref{def:smoo};
	\item (A3) The global minimum point of $\mathcal{F}$ in \eqref{lc} (or $\mathcal{F}_{\ell_{0}}$ in \eqref{l0}) is bounded.
\end{itemize}
Borrowing from  $L_{f}$ in Assumption 1, $\nu$ can be defined such that problems \eqref{l0} and \eqref{lc} have the consistency in Theorem \ref{tho:gm} and Proposition \ref{pro:lm}. Parameter $\kappa$ in Definition \ref{def:smoo} is used in the SPG algorithm, which can be calculated exactly for most smoothing functions \cite{C12}. 
 The value of $L$ in Definition \ref{def:smoo} is not necessary, and we will use a simple line search method to find an acceptable value at each iteration of the SPG algorithm.

 Based on the above assumptions, the SPG algorithm for solving \eqref{lc} is outlined as Algorithm \ref{alg} here.
\begin{algorithm}
	\caption{SPG algorithm.}
	\label{alg}
	\begin{algorithmic}[]
		\REQUIRE Let $X^{0}\in\mathbb R^{m\times n}$ and $\mu_{-1}=\mu_{0} \in(0, \bar{\mu}]$. Choose $\rho,\,\sigma > 1,\, \alpha>0$, and $0<\underline{\gamma} \leq \bar{\gamma}$. Set $k:=0$.
		\WHILE{not converge}
		\STATE \bm{Step~ 1.} Choose $\gamma_{k} \in[\underline{\gamma}, \bar{\gamma}]$ and let $d^{k} := d^{X^{k}}$, where $d^{X^{k}}$ is defined in \eqref{d}.
		\STATE \bm{Step ~2.} 2a) Compute
		\begin{equation}\label{eq:hx}
			\hat{X}^{k+1}=\arg \min_X Q_{d^{k}, \gamma_{k}}\left(X, X^{k}, \mu_{k}\right).
		\end{equation}		
		\STATE \qquad\quad\, 2b) If $\hat{X}^{k+1}$ satisfies
		\begin{equation}\label{eq:flq}
			\tilde{\mathcal{F}}^{d^{k}}\left(\hat{X}^{k+1}, \mu_{k}\right) \leq Q_{d^{k}, \gamma_{k}}\left(\hat{X}^{k+1}, X^{k}, \mu_{k}\right),
		\end{equation}
	let
		\begin{equation}\label{eq:x_k}
			X^{k+1}:=\hat{X}^{k+1}
		\end{equation}
	    and go to \bm{Step ~3}. Otherwise, $\gamma_{k}:=\rho \gamma_{k},$ and return to Step 2a).
		\STATE \bm{Step~ 3.} If
		\begin{equation}\label{ag:mu_if}
			\tilde{\mathcal{F}}\left(X^{k+1}, \mu_{k}\right)+\kappa \mu_{k}-\tilde{\mathcal{F}}\left(X^{k}, \mu_{k-1}\right)-\kappa \mu_{k-1} \leq-\alpha \mu_{k},
		\end{equation}
		set $\mu_{k+1}=\mu_{k} ;$ otherwise, set
		\begin{equation}\label{ag:mu}
			\mu_{k+1}:=\frac{\mu_{0}}{(k+1)^{\sigma}}.
		\end{equation}		
		\STATE Let $ k:=k+1 $ and go to \bm{Step~ 1}.
		\ENDWHILE
		\ENSURE $X^{k+1}$.
	\end{algorithmic}
\end{algorithm}

At each iteration,  the proximal gradient algorithm is adopted for solving
$\min_{X}$ $Q_{d^k, \gamma_k}(X, X^k, \mu_k)$ with fixed $\mu_{k},\, \gamma_{k},$ and $d^{k}$. The values of $\gamma_{k}$ are chosen independently in Step 1 of each iteration. Step 3 updates the smoothing parameter $\mu_{k}$ by \eqref{ag:mu_if}, where $\tilde{\mathcal{F}}\left(X^{k+1}, \mu_{k}\right)+\kappa \mu_{k}$ can be seen as an energy function, with monotone nonincreasing property, which can be seen from Lemma \ref{lem:noc}. If the energy function decreases more than the given scale, then the smoothing parameter $ \mu_k $ is still acceptable; otherwise, we reduce it by the updating rule \eqref{ag:mu}. Let
$$
\mathcal{N}^{s}=\left\{k \in \mathbb{N}: \mu_{k+1} \neq \mu_{k}\right\},
$$
and denote $n_{r}^{s}$ the $r$th smallest number in $\mathcal{N}^{s}$. Then, we can update $\left\{\mu_{k}\right\}$ by
\begin{equation}\label{eq:mk}
\mu_{k}=\mu_{n_{r}^{s}+1}=\frac{\mu_{0}}{\left(n_{r}^{s}+1\right)^{\sigma}}, \quad \forall n_{r}^{s}+1 \leq k \leq n_{r+1}^{s},	
\end{equation}
which will be used in the proof of Lemma \ref{lem:mu}.

\subsection{Convergence analysis}\label{sub:ca}
In this subsection, we will present the convergence analysis for the SPG algorithm.

Let $\left\{X^{k}\right\}$, $\left\{\gamma_{k}\right\}$ and $\left\{\mu_{k}\right\}$ be the sequences generated by the SPG algorithm. We first show that the SPG algorithm is well-defined.  Then we establish some basic properties of the iterates $\left\{X^{k}\right\}$, $\left\{\gamma_{k}\right\}$ and $\left\{\mu_{k}\right\}$ in Lemma \ref{lem:wd}-\ref{lem:pro}. Next, the subsequential convergence of $\left\{X^{k}\right\}$ to a lifted stationary point of \eqref{lc} is established in Proposition \ref{pro:ac}. Finally, we prove the global sequence convergence of iterates $\left\{X^{k}\right\}$ in Theorem \ref{tho:gc}.

\begin{lemma}\label{lem:wd}
	The SPG algorithm is well-defined and 
	$\left\{\gamma_{k}\right\} \subseteq[\underline{\gamma}, \max \{\bar{\gamma}, \rho L\}]$.
\end{lemma}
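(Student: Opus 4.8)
The plan is to show two things: (1) for any fixed iteration $k$, the inner loop in Step 2 (the line search on $\gamma_k$) terminates after finitely many increments $\gamma_k \leftarrow \rho\gamma_k$; and (2) the value of $\gamma_k$ accepted at the end of this loop never exceeds $\max\{\bar\gamma,\rho L\}$, where $L$ is the Lipschitz constant from Definition \ref{def:smoo}(v). Together with $\gamma_k \in [\underline\gamma,\bar\gamma]$ at the start of Step 1, these give $\{\gamma_k\}\subseteq[\underline\gamma,\max\{\bar\gamma,\rho L\}]$, and the finite termination of all inner steps plus the fact that Step 3 always assigns $\mu_{k+1}$ make the algorithm well-defined.

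\textbf{Key steps.} First I would recall that $\nabla_X\tilde f(\cdot,\mu_k)$ is Lipschitz continuous with constant $L\mu_k^{-1}$ by Definition \ref{def:smoo}(v). The standard descent lemma then gives, for all $X$,
\begin{equation*}
\tilde f(X,\mu_k) \le \tilde f(X^k,\mu_k) + \langle X-X^k,\nabla\tilde f(X^k,\mu_k)\rangle + \frac{L}{2\mu_k}\|X-X^k\|_F^2.
\end{equation*}
Adding $\lambda\Phi^{d^k}(X)$ to both sides and comparing with the definition \eqref{eq:Q} of $Q_{d^k,\gamma_k}$, I see that whenever $\gamma_k \ge L$ we have
\begin{equation*}
\tilde{\mathcal F}^{d^k}(X,\mu_k) = \tilde f(X,\mu_k)+\lambda\Phi^{d^k}(X) \le Q_{d^k,\gamma_k}(X,X^k,\mu_k)
\end{equation*}
for every $X$, in particular for $X=\hat X^{k+1}$, so the acceptance test \eqref{eq:flq} is satisfied. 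Since each failed test multiplies $\gamma_k$ by $\rho>1$, after finitely many steps $\gamma_k$ exceeds $L$ and the loop stops; this proves finite termination. For the upper bound: at the moment of acceptance, either $\gamma_k$ was never increased, so $\gamma_k\le\bar\gamma$; or it was increased at least once, meaning the previous value $\gamma_k/\rho$ failed the test, hence $\gamma_k/\rho < L$ (because, as just shown, any value $\ge L$ would have passed), giving $\gamma_k < \rho L$. In either case $\gamma_k \le \max\{\bar\gamma,\rho L\}$. Finally, Step 2a has a closed-form solution by Theorem \ref{the:pro-gra-m} (applied with $\tau=\lambda\gamma_k^{-1}\mu_k$ and $W=X^k-\gamma_k^{-1}\mu_k\nabla\tilde f(X^k,\mu_k)$), and Step 3 unconditionally sets $\mu_{k+1}$, so every iteration completes; hence the algorithm is well-defined.

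\textbf{Main obstacle.} The only real subtlety is making the descent-lemma argument rigorous given that $\gamma_k$ is being updated \emph{inside} the loop and that $Q_{d^k,\gamma_k}$ depends on $\gamma_k$ through the quadratic term $\tfrac12\gamma_k\mu_k^{-1}\|X-X^k\|_F^2$; I need the comparison $\tilde{\mathcal F}^{d^k}\le Q_{d^k,\gamma_k}$ to hold uniformly in $X$ once $\gamma_k\ge L$, which is exactly what the descent lemma delivers since $\Phi^{d^k}$ appears identically on both sides. A secondary point worth a sentence is that $d^k$ is fixed during the inner loop (it is set once in Step 1), so $\Phi^{d^k}$ does not change as $\gamma_k$ varies, which is what lets the cancellation go through.
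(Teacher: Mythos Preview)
Your proposal is correct and follows essentially the same approach as the paper: both arguments reduce the acceptance test \eqref{eq:flq} to the descent inequality for $\tilde f(\cdot,\mu_k)$ (using that $\lambda\Phi^{d^k}$ cancels on both sides), invoke the $L\mu_k^{-1}$-Lipschitz gradient bound from Definition~\ref{def:smoo}(v) to guarantee acceptance once $\gamma_k\ge L$, and then deduce the bound $\gamma_k\le\max\{\bar\gamma,\rho L\}$ from the last-failure/first-success dichotomy. Your write-up is in fact more explicit than the paper's on the case analysis for the upper bound and on why the remaining steps (closed-form proximal solution, unconditional update of $\mu_{k+1}$) complete each iteration.
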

\begin{proof}
 Clearly, \eqref{eq:flq} holds if and only if
	$$
	\tilde{f}\left(\hat{X}^{k+1}, \mu_{k}\right) \leq \tilde{f}\left(X^{k}, \mu_{k}\right)+\left\langle\nabla \tilde{f}\left(X^{k}, \mu_{k}\right), \hat{X}^{k+1}-X^{k}\right\rangle+\frac{1}{2} \gamma_{k} \mu_{k}^{-1}\left\|\hat{X}^{k+1}-X^{k}\right\|_F^{2}.
	$$
	Invoking Definition \ref{def:smoo}-(v), \eqref{eq:flq} holds when $\gamma_{k} \geq L $. Thus, the updating of $\gamma_{k}$ in Step 2 is at most $\log _{\eta}(L / \underline{\gamma})+1$ times at each iteration. Hence, the SPG algorithm is well-defined, and we have that $\gamma_{k} \leq \max \{\bar{\gamma}, \rho L\},~ \forall k \in \mathbb{N}$.	
\end{proof}

\begin{lemma}\label{lem:noc}
	For any $k \in \mathbb{N}$, we have
	\begin{equation}\label{eq:noc}
	\tilde{\mathcal{F}}\left(X^{k+1}, \mu_{k}\right) \leq \tilde{\mathcal{F}}\left(X^{k}, \mu_{k}\right),	
	\end{equation}
	which implies that $\left\{\tilde{\mathcal{F}}\left(X^{k+1}, \mu_{k}\right)+\kappa \mu_{k}\right\}$ is nonincreasing.
\end{lemma}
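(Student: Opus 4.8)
The plan is to first prove the one-step inequality \eqref{eq:noc}, and then deduce the monotonicity of the energy sequence from it together with the $\mu$-Lipschitz continuity of $\tilde f$.

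To obtain \eqref{eq:noc}, I would argue as follows. By Lemma \ref{lem:wd} the inner loop of Step 2 terminates, so the accepted iterate satisfies $X^{k+1}=\hat X^{k+1}=\arg\min_X Q_{d^k,\gamma_k}(X,X^k,\mu_k)$ and also passes the test \eqref{eq:flq}. Minimality of $\hat X^{k+1}$ for $Q_{d^k,\gamma_k}(\cdot,X^k,\mu_k)$ gives $Q_{d^k,\gamma_k}(\hat X^{k+1},X^k,\mu_k)\le Q_{d^k,\gamma_k}(X^k,X^k,\mu_k)$, and evaluating \eqref{eq:Q} at $X=Z=X^k$ kills the linear and quadratic terms, leaving $Q_{d^k,\gamma_k}(X^k,X^k,\mu_k)=\tilde f(X^k,\mu_k)+\lambda\Phi^{d^k}(X^k)=\tilde{\mathcal{F}}^{d^k}(X^k,\mu_k)$. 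The crucial point is that $d^k=d^{X^k}$ is exactly the selection \eqref{d}, for which $\Phi^{d^{X^k}}(X^k)=\Phi(X^k)$ (the identity stated after \eqref{eq:Phi_d}); hence $Q_{d^k,\gamma_k}(X^k,X^k,\mu_k)=\tilde{\mathcal{F}}(X^k,\mu_k)$. On the other side, \eqref{eq:flq} reads $\tilde{\mathcal{F}}^{d^k}(\hat X^{k+1},\mu_k)\le Q_{d^k,\gamma_k}(\hat X^{k+1},X^k,\mu_k)$, while \eqref{eq:dlg} gives $\tilde{\mathcal{F}}(\hat X^{k+1},\mu_k)\le\tilde{\mathcal{F}}^{d^k}(\hat X^{k+1},\mu_k)$. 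Chaining these four inequalities and recalling $X^{k+1}=\hat X^{k+1}$ yields $\tilde{\mathcal{F}}(X^{k+1},\mu_k)\le\tilde{\mathcal{F}}(X^k,\mu_k)$, which is \eqref{eq:noc}.

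For the monotonicity claim, I would first observe that the update of $\mu_k$ in Step 3 (with $\mu_{-1}=\mu_0$) makes $\{\mu_k\}$ nonincreasing, since each $\mu_{k+1}$ equals $\mu_k$ or $\mu_0/(k+1)^\sigma$ and the latter never exceeds the current value. Because the $\lambda\Phi(X^k)$ terms cancel, Definition \ref{def:smoo}-(iv) applied with $\mu_k\le\mu_{k-1}$ gives
\[
\tilde{\mathcal{F}}(X^k,\mu_k)-\tilde{\mathcal{F}}(X^k,\mu_{k-1})=\tilde f(X^k,\mu_k)-\tilde f(X^k,\mu_{k-1})\le\kappa(\mu_{k-1}-\mu_k).
\]
Combining this with \eqref{eq:noc} and adding $\kappa\mu_k$ to both sides produces $\tilde{\mathcal{F}}(X^{k+1},\mu_k)+\kappa\mu_k\le\tilde{\mathcal{F}}(X^k,\mu_{k-1})+\kappa\mu_{k-1}$, i.e. $\{\tilde{\mathcal{F}}(X^{k+1},\mu_k)+\kappa\mu_k\}$ is nonincreasing.

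I do not expect a genuine obstacle here; the argument is essentially bookkeeping over facts already in hand. The one spot that needs care is that $\Phi^{d^k}$ coincides with $\Phi$ only at the base point $X^k$ — at the new iterate $\hat X^{k+1}$ one must fall back on the inequality \eqref{eq:dlg}, not an equality — and that the direction of the $\mu$-Lipschitz bound in Definition \ref{def:smoo}-(iv) is used consistently with $\mu_k\le\mu_{k-1}$.
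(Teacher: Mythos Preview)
Your proof is correct and follows the same overall structure as the paper's: minimality of $\hat X^{k+1}$ for $Q_{d^k,\gamma_k}(\cdot,X^k,\mu_k)$, the line-search test \eqref{eq:flq}, the identity $\Phi^{d^{X^k}}(X^k)=\Phi(X^k)$, the inequality \eqref{eq:dlg}, and then Definition~\ref{def:smoo}-(iv) for the energy monotonicity. The one difference is that the paper inserts an extra step using the convexity of $\tilde f(\cdot,\mu_k)$ (Definition~\ref{def:smoo}-(ii)) to derive the stronger intermediate bound $\tilde{\mathcal{F}}^{d^k}(X^{k+1},\mu_k)\le\tilde{\mathcal{F}}^{d^k}(X,\mu_k)+\tfrac12\gamma_k\mu_k^{-1}\|X-X^k\|_F^2$ for all $X$, and only then specializes to $X=X^k$; you instead substitute $X=X^k$ directly in the minimality inequality for $Q$, which is more economical and shows that convexity is not actually needed for this particular lemma.
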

\begin{proof}
	By  \eqref{eq:hx}, it follows
	$$
	Q_{d^{k}, \gamma_{k}}\left(X^{k+1}, X^{k}, \mu_{k}\right) \leq Q_{d^{k}, \gamma_{k}}\left(X, X^{k}, \mu_{k}\right), \quad \forall X \in \mathbb{R}^{m\times n}.
	$$
	From \eqref{eq:Q}, upon rearranging the terms, we have
	\begin{equation}\label{3.6-1}
	\begin{aligned}
	\lambda \Phi^{d^{k}}\left(X^{k+1}\right) \leq & \lambda \Phi^{d^{k}}\left(X\right)+\left\langle X-X^{k+1}, \nabla \tilde{f}\left(X^{k}, \mu_{k}\right)\right\rangle +\frac{1}{2} \gamma_{k} \mu_{k}^{-1}\left\|X-X^{k}\right\|_F^{2}\\
	&-\frac{1}{2} \gamma_{k} \mu_{k}^{-1}\left\|X^{k+1}-X^{k}\right\|_F^{2}.
	\end{aligned}	
	\end{equation}
	Moreover, \eqref{eq:flq} can be written as
	\begin{equation}\label{3.6-2}
	\begin{aligned}
	\tilde{\mathcal{F}}^{d^{k}}\left(X^{k+1}, \mu_{k}\right) \leq& \tilde{f}\left(X^{k}, \mu_{k}\right)+\left\langle X^{k+1}-X^{k}, \nabla \tilde{f}\left(X^{k}, \mu_{k}\right)\right\rangle
	\\&	+\frac{1}{2} \gamma_{k} \mu_{k}^{-1}\left\|X^{k+1}-X^{k}\right\|_F^{2}+\lambda \Phi^{d^{k}}\left(X^{k+1}\right).		
	\end{aligned}
	\end{equation}
	Summing up \eqref{3.6-1} and \eqref{3.6-2}, there holds
	\begin{equation}\label{3.6-3}
	\begin{aligned}
	\tilde{\mathcal{F}}^{d^{k}}\left(X^{k+1}, \mu_{k}\right) \leq& \tilde{f}\left(X^{k}, \mu_{k}\right)+\lambda \Phi^{d^{k}}(X)+\left\langle X-X^{k}, \nabla \tilde{f}\left(X^{k}, \mu_{k}\right)\right\rangle \\
	&+\frac{1}{2} \gamma_{k} \mu_{k}^{-1}\left\|X-X^{k}\right\|_F^{2}, \quad \forall X \in \mathbb{R}^{m\times n}.		
	\end{aligned}
	\end{equation}
	For a fixed $\mu>0$, the convexity of $\tilde{f}(X, \mu)$ with respect to $X$ leads to
	\begin{equation}\label{3.6-4}
	\tilde{f}\left(X^{k}, \mu_{k}\right)+\left\langle X-X^{k}, \nabla \tilde{f}\left(X^{k}, \mu_{k}\right)\right\rangle \leq \tilde{f}\left(X, \mu_{k}\right), \quad \forall X \in \mathbb{R}^{m\times n}.	
	\end{equation}
	Combining \eqref{3.6-3} and \eqref{3.6-4} and recalling the definition of $\tilde{\mathcal{F}}^{d^{k}},$ it follows
	\begin{equation}\label{3.6-5}
	\begin{aligned}
	\tilde{\mathcal{F}}^{d^{k}}\left(X^{k+1}, \mu_{k}\right) \leq& \tilde{\mathcal{F}}^{d^{k}}\left(X, \mu_{k}\right)+\frac{1}{2} \gamma_{k} \mu_{k}^{-1}\left\|X-X^{k}\right\|_F^{2}, \quad \forall X \in \mathbb{R}^{m\times n}.			
	\end{aligned}
	\end{equation}
	Letting $X=X^{k}$ in \eqref{3.6-5} and by $d^{k}=d^{X^{k}},$ we obtain $\Phi^{d^k}(X^k)=\Phi(X^k)$, and hence
	\begin{equation}\label{3.6-6}
	\tilde{\mathcal{F}}^{d^{k}}\left(X^{k+1}, \mu_{k}\right) \leq \tilde{\mathcal{F}}\left(X^{k}, \mu_{k}\right).	
	\end{equation}
	Thanks to \eqref{eq:dlg},  $\tilde{\mathcal{F}}^{d^{k}}\left(X^{k+1}, \mu_{k}\right) \geq \tilde{\mathcal{F}}\left(X^{k+1}, \mu_{k}\right)$. Therefore, \eqref{3.6-6} leads to \eqref{eq:noc}.
	
	Since $\tilde{\mathcal{F}}(X,\mu)=\tilde{f}(X,\mu)+\lambda \Phi(X)$, it is  clear that
	\[\tilde{\mathcal{F}}(X,\mu_k)-\tilde{\mathcal{F}}(X,\mu_{k-1})=\tilde
	{f}(X,\mu_k)-\tilde{f}(X,\mu_{k-1})\leq \kappa\left(\mu_{k-1}-\mu_{k}\right),\]
	where the last inequality comes from Definition \ref{def:smoo} (iv). Together with (\ref{eq:noc}), there holds
	\begin{equation}\label{3.6-7}
	\tilde{\mathcal{F}}\left(X^{k+1}, \mu_{k}\right)+\kappa \mu_{k} \leq \tilde{\mathcal{F}}\left(X^{k}, \mu_{k}\right)+\kappa \mu_{k}
	\leq
	\tilde{\mathcal{F}}\left(X^{k}, \mu_{k-1}\right)+\kappa \mu_{k-1},	
	\end{equation}
	which implies the nonincreasing property of $\left\{\tilde{\mathcal{F}}\left(X^{k+1}, \mu_{k}\right)+\kappa \mu_{k}\right\}$.
	\end{proof}

\begin{lemma}\label{lem:mu}
The following statements hold:
\item [(i)] $\sum\limits_{k=0}^{\infty} \mu_{k} \leq \Lambda$ with $\Lambda=\frac{1}{\alpha}\left(\tilde{\mathcal{F}}\left(X^{0}, \mu_{-1}\right)+\kappa \mu_{-1}-\min\limits \mathcal{F}(X)\right)+\frac{ \mu_{0} \sigma}{ \sigma-1}<\infty$;
\item [(ii)] $\lim\limits _{k \rightarrow \infty} \mu_{k}=0$.

\end{lemma}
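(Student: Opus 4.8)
The plan is to read off the claim from the monotone ``energy function'' of Lemma~\ref{lem:noc} together with the dichotomy in Step~3 for updating $\mu_k$. Write $E_k:=\tilde{\mathcal{F}}(X^k,\mu_{k-1})+\kappa\mu_{k-1}$, so that $E_0=\tilde{\mathcal{F}}(X^0,\mu_{-1})+\kappa\mu_{-1}$. Lemma~\ref{lem:noc} gives that $\{E_k\}$ is nonincreasing, while \eqref{eq:kapmu} gives $\tilde f(X^k,\mu_{k-1})\ge f(X^k)-\kappa\mu_{k-1}$, whence
\[
E_k=\tilde f(X^k,\mu_{k-1})+\lambda\Phi(X^k)+\kappa\mu_{k-1}\ge f(X^k)+\lambda\Phi(X^k)=\mathcal{F}(X^k)\ge\min\mathcal{F}(X),
\]
the last quantity being finite by (A3) (indeed $\mathcal{F}\ge 0$). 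Hence $\{E_k\}$ converges and the telescoping estimate
\[
\sum_{k=0}^{\infty}\bigl(E_k-E_{k+1}\bigr)=E_0-\lim_{k\to\infty}E_k\le\tilde{\mathcal{F}}(X^0,\mu_{-1})+\kappa\mu_{-1}-\min\mathcal{F}(X)
\]
holds; the right-hand side is exactly $\alpha$ times the first group of terms in $\Lambda$.

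Next I would split $\sum_{k\ge0}\mu_k$ according to membership in $\mathcal{N}^s$. For $k\notin\mathcal{N}^s$ the update rule kept $\mu_{k+1}=\mu_k$, which by Step~3 means the test \eqref{ag:mu_if} succeeded, i.e. $E_{k+1}-E_k\le-\alpha\mu_k$; summing over such $k$ against the telescoping bound yields
\[
\alpha\sum_{k\notin\mathcal{N}^s}\mu_k\le\sum_{k=0}^{\infty}(E_k-E_{k+1})\le\tilde{\mathcal{F}}(X^0,\mu_{-1})+\kappa\mu_{-1}-\min\mathcal{F}(X).
\]
For $k\in\mathcal{N}^s$ one has $k=n_r^s$ for some $r\ge1$, and \eqref{eq:mk} (equivalently the reset rule \eqref{ag:mu}) identifies $\mu_{n_r^s}$ with the block value preceding $n_r^s$, namely $\mu_0$ when $r=1$ and $\mu_0/(n_{r-1}^s+1)^\sigma$ when $r\ge2$; since $\{n_r^s\}$ is a strictly increasing sequence of nonnegative integers with $n_1^s\ge1$, we get $n_{r-1}^s+1\ge r$, so
\[
\sum_{k\in\mathcal{N}^s}\mu_k\le\mu_0+\sum_{r\ge2}\frac{\mu_0}{r^\sigma}\le\mu_0+\int_1^{\infty}\frac{\mu_0}{t^\sigma}\,dt=\frac{\mu_0\sigma}{\sigma-1},
\]
finiteness using $\sigma>1$. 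Adding the two displays gives $\sum_{k=0}^{\infty}\mu_k\le\Lambda<\infty$, which is (i); statement (ii) is then immediate, because a convergent series of nonnegative terms has general term tending to $0$, so $\mu_k\to0$.

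The point that needs care is the index bookkeeping in the second step: one must verify that $k\notin\mathcal{N}^s$ genuinely forces the test \eqref{ag:mu_if} to have been satisfied, so that the per-step decrement estimate $\alpha\mu_k\le E_k-E_{k+1}$ is available (noting in particular that $\mu_1=\mu_0$ regardless of the outcome at $k=0$), and that the reindexing $k=n_r^s$ together with \eqref{eq:mk} places each $\mu_k$ with $k\in\mathcal{N}^s$ at a block value small enough for the $p$-series comparison. Once this accounting is pinned down, the remainder is the routine telescoping-plus-series computation above; as a byproduct, the finiteness of $\sum_{k\notin\mathcal{N}^s}\mu_k$ also forces $\mathcal{N}^s$ to be infinite, which is consistent with $\{E_k\}$ being bounded below.
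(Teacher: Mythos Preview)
Your proposal is correct and follows essentially the same approach as the paper: split $\sum_k\mu_k$ into the indices in $\mathcal{N}^s$ (bounded by a $p$-series via \eqref{eq:mk}) and those not in $\mathcal{N}^s$ (bounded by telescoping the monotone energy $E_k$ against its lower bound $\min\mathcal{F}$ using \eqref{ag:mu_if} and \eqref{eq:kapmu}). If anything, your index bookkeeping for $\mu_{n_r^s}$ (as the \emph{pre}-reset block value $\mu_0/(n_{r-1}^s+1)^\sigma$) and your flag on the $k=0$ edge case are more careful than the paper's presentation, which writes $\sum_{k\in\mathcal{N}^s}\mu_k=\sum_r\mu_0/(n_r^s+1)^\sigma$ with a slight index slip that nevertheless yields the same bound.
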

\begin{proof}
	(i) From \eqref{eq:mk}, we have
	\begin{equation}\label{eq:in}
		\sum_{k \in \mathcal{N}^{s}} \mu_{k}=\sum_{r=1}^{\infty}  \frac{\mu_{0}}{\left(n_{r}^{s}+1\right)^{ \sigma}} \leq \sum_{k=1}^{\infty} \frac{\mu_{0}}{k^{ \sigma}} \leq \frac{ \mu_{0} \sigma}{ \sigma-1},	
	\end{equation}
	where $n_{r}^{s}$ is the $r$th smallest element in $\mathcal{N}^{s}$. By (A3) and \eqref{eq:kapmu}, we see that
	\begin{equation}\label{3.5-2}
		\tilde{\mathcal{F}}\left(X^{k+1}, \mu_{k}\right)+\kappa \mu_{k} \geq \mathcal{F}\left(X^{k+1}\right) \geq \min  \mathcal{F}(X)=\min \mathcal{F}_{\ell_{0}}(X)>-\infty,	
	\end{equation}
	where the equality follows from Theorem \ref{tho:gm}. When $k \notin \mathcal{N}^{s}$, \eqref{ag:mu_if} can be rewritten as \[\alpha \mu_{k} \leq \tilde{\mathcal{F}}\left(X^{k}, \mu_{k-1}\right)+\kappa \mu_{k-1}-\tilde{\mathcal{F}}\left(X^{k+1}, \mu_{k}\right)-\kappa \mu_{k},\]
	which together with the nonincreasing property of $\left\{\tilde{\mathcal{F}}\left(X^{k+1}, \mu_{k}\right)+\kappa \mu_{k}\right\}$ and \eqref{3.5-2} implies that
	\begin{equation}\label{eq:notin}
		\sum_{k \notin \mathcal{N}^{s}} \mu_{k} \leq \frac{1}{\alpha}\left(\tilde{\mathcal{F}}\left(X^{0}, \mu_{-1}\right)+\kappa \mu_{-1}-\min \mathcal{F}(X)\right).	
	\end{equation}
	Combining \eqref{eq:in} and \eqref{eq:notin}, the proof for the estimation in item (i) is completed.

	(ii) From (i),  (ii) is obvious. 
\end{proof}

\begin{lemma}\label{lem:pro}
Suppose that the sequence $\left\{X^{k}\right\}$ generated by SPG algorithm  is bounded. Then there exists $ K\in \mathbb{N} $  such that for all $ k \geq  K $, it holds that
\begin{itemize}
	
	\item [(i)] $\left\|\nabla \tilde{f}\left(X^{k}, \mu_{k}\right)\right\|_F<\frac{1}{2}\left(\lambda / \nu+L_{f}\right)$;
	\item [(ii)] $\left\|X^{k+1}-X^{k}\right\|_F \leq \left( {\sqrt n+1} \right)(\lambda /\nu)\gamma _k^{ - 1}{\mu _k}$;
	\item [(iii)] $ \sum\limits_{k = 0}^\infty  {\left\| {{X^{k + 1}} - {X^k}} \right\|_F < \infty }  $.
\end{itemize}
\end{lemma}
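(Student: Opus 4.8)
The plan is to establish (i), then (ii), then (iii), since (ii) rests on (i) and (iii) is a quick consequence of (ii); the only genuinely non-mechanical step is (i). For (i): Assumption~2 gives $L_f<\lambda/\nu$, hence $L_f<\tfrac12(\lambda/\nu+L_f)$, so it is enough to show $\|\nabla\tilde f(X^k,\mu_k)\|_F\le L_f$ for all large $k$. I would argue by contradiction: if the reverse inequality $\|\nabla\tilde f(X^{k},\mu_{k})\|_F\ge\tfrac12(\lambda/\nu+L_f)$ held along a subsequence, I would use the assumed boundedness of $\{X^k\}$ to pass to a further subsequence with $X^{k_j}\to\bar X$, recalling $\mu_{k_j}\to 0$ from Lemma~\ref{lem:mu}(ii). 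Since $\tilde f(\cdot,\mu)$ is convex and, by \eqref{eq:kapmu} together with the continuity of $f$, uniformly bounded on a compact neighbourhood of $\{X^k\}$ for all $\mu\in(0,\bar\mu]$, its subgradients on that set are uniformly bounded; hence $\{\nabla\tilde f(X^{k_j},\mu_{k_j})\}$ admits a limit point $g$, which by the gradient-consistency property in Definition~\ref{def:smoo}(iii) lies in $\partial f(\bar X)$, so $\|g\|_F\le L_f$ as $f$ is $L_f$-Lipschitz. This contradicts $\|g\|_F\ge\tfrac12(\lambda/\nu+L_f)>L_f$, and we may take $K$ to be any index beyond which (i) holds.

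For (ii) the explicit proximal formula does everything. By Step~2 of the SPG algorithm and the description following \eqref{eq:Q} (via Theorem~\ref{the:pro-gra-m}), $X^{k+1}=U\mathscr{D}(\hat x)V^{T}$, where $U\mathscr{D}(w)V^{T}$ is the SVD of $W:=X^{k}-\gamma_k^{-1}\mu_k\nabla\tilde f(X^k,\mu_k)$, $\tau:=\lambda\gamma_k^{-1}\mu_k$, and $\hat x=\bm{prox}_{\tau\Phi^{d^{k}}}(w)$. The triangle inequality and orthogonal invariance of $\|\cdot\|_F$ give
\[\|X^{k+1}-X^{k}\|_F\le\|X^{k+1}-W\|_F+\|W-X^{k}\|_F=\|\hat x-w\|_2+\gamma_k^{-1}\mu_k\|\nabla\tilde f(X^k,\mu_k)\|_F.\]
A short case analysis on $d_i\in\{1,2\}$ in the closed form \eqref{eq:y} (using $w_i\ge 0$) shows $|\hat x_i-w_i|\le\tau/\nu$ for every $i$, hence $\|\hat x-w\|_2\le\sqrt n\,\tau/\nu=\sqrt n\,(\lambda/\nu)\gamma_k^{-1}\mu_k$. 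For $k\ge K$, (i) combined with $\tfrac12(\lambda/\nu+L_f)<\lambda/\nu$ bounds the second term by $(\lambda/\nu)\gamma_k^{-1}\mu_k$; adding the two estimates gives (ii). For (iii), since $\gamma_k\ge\underline\gamma$, (ii) yields $\|X^{k+1}-X^k\|_F\le(\sqrt n+1)(\lambda/\nu)\underline\gamma^{-1}\mu_k$ for $k\ge K$, and $\sum_{k}\mu_k\le\Lambda<\infty$ by Lemma~\ref{lem:mu}(i); together with the finiteness of the initial block $\sum_{k<K}\|X^{k+1}-X^k\|_F$, this gives $\sum_{k=0}^\infty\|X^{k+1}-X^k\|_F<\infty$.

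The main obstacle is (i): one must combine $\mu_k\downarrow 0$, the gradient consistency of $\tilde f$, and the strict gap $L_f<\lambda/\nu$ opened up by Assumption~2 to push $\|\nabla\tilde f(X^k,\mu_k)\|_F$ strictly below $\tfrac12(\lambda/\nu+L_f)$ for all large $k$. The delicate point inside (i) is showing that the gradients along the bounded iterate sequence actually have a limit point in $\partial f(\bar X)$; this requires the local uniform boundedness of the subgradients of the convex maps $\tilde f(\cdot,\mu_k)$, which follows from their uniform boundedness on a compact neighbourhood of $\{X^k\}$ via \eqref{eq:kapmu} and the continuity of $f$. Once (i) is in hand, parts (ii) and (iii) are routine bookkeeping on top of the closed-form proximal step and the summability of $\{\mu_k\}$.
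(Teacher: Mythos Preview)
Your proof is correct and follows essentially the same route as the paper: a contradiction argument for (i) via gradient consistency and $\mu_k\to 0$, the triangle-inequality split $\|X^{k+1}-W^k\|_F+\|W^k-X^k\|_F$ with the closed-form proximal bound \eqref{eq:y} for (ii), and summability of $\{\mu_k\}$ from Lemma~\ref{lem:mu}(i) for (iii). The only difference is that you take extra care to argue that $\{\nabla\tilde f(X^{k_j},\mu_{k_j})\}$ actually has a limit point before invoking Definition~\ref{def:smoo}(iii), which the paper leaves implicit.
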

\begin{proof}
(i) We argue it by contradiction. Suppose that there exists a subsequence of $\left\{X^{k}\right\}$, denoted by $\left\{X^{k_{i}}\right\}$, such that
\begin{equation}\label{3.10-1}
\left\|\nabla \tilde{f}\left(X^{k_{i}}, \mu_{k_{i}}\right)\right\|_F \geq \frac{1}{2}\left(\lambda / \nu+L_{f}\right)>L_{f}, \quad \forall i \in \mathbb{N}.	
\end{equation}
Since $\left\{X^{k_{i}}\right\}$ is bounded, there exists a subsequence of $\left\{X^{k_{i}}\right\}$ (also denoted by $\left\{X^{k_{i}}\right\}$ for simplicity) and $\bar{X}$ such that $\lim\limits_{i \rightarrow \infty} X^{k_{i}}=\bar{X}$. Due to $\lim\limits_{i \rightarrow \infty} \mu_{k_{i}}=0$, the property of $\tilde{f}$ in Definition \ref{def:smoo}-(iii) and \eqref{3.10-1} imply the existence of $\bar{\xi} \in \partial f(\bar{X})$ such that $\|\bar{\xi}\|_F>L_{f}$, which leads to a contradiction to the definition of $L_{f}$ given in Assumption 1. Hence,  (i) is established.

(ii) Let $W^{k}=X^{k}-\gamma_{k}^{-1} \mu_{k} \nabla\tilde{f}\left(X^{k}, \mu_{k}\right)$ and $U^{k} \mathscr{D}\left(w^{k}\right)\left(V^{k}\right)^{T}$ be the singular value decomposition of $W^{k}$. By \eqref{eq:y}, we have
\begin{equation}\label{eq:x-w-x}
{\left\| {{X^{k + 1}} - {W^k}} \right\|_F} = {\left\| {{x^{k + 1}} - {w^k}} \right\|_F} \le \sqrt n (\lambda / \nu) \gamma_{k}^{-1} \mu_{k}.
\end{equation}
From (i), there exists $ K\in \mathbb{N} $ such that for all $ k>K $, it holds that
\begin{equation}\label{eq:w-x}
\left\|W^k-X^k\right\|_F=\left\|-\gamma_{k}^{-1} \mu_{k} \nabla\tilde{f}\left(X^{k}, \mu_{k}\right)\right\|_F\le (\lambda / \nu)\gamma_{k}^{-1} \mu_{k}.
\end{equation}
Combining \eqref{eq:w-x} and \eqref{eq:x-w-x}, we have
\[{\left\| {{X^{k + 1}} - {X^k}} \right\|_F} \le {\left\| {{X^{k + 1}} - {W^k}} \right\|_F} + {\left\| {{W^k} - {X^k}} \right\|_F} \le \left( {\sqrt n+1} \right)(\lambda /\nu)\gamma _k^{ - 1}{\mu _k},\]
which completes the proof for item (ii).

(ii) From Lemma \ref{lem:mu}-(i) and (ii) of this lemma, we have
\begin{align*}
\sum\limits_{k = 0}^\infty  \left\| {{X^{k + 1}} - {X^k}} \right\|_F &\le \sum\limits_{k = 0}^{K - 1} {\left\| {{X^{k + 1}} - {X^k}} \right\|_F + \sum\limits_{k = K}^\infty  {\left\| {{X^{k + 1}} - {X^k}} \right\|_F} }  \\
&\le \sum\limits_{k = 0}^{K - 1} {\left\| {{X^{k + 1}} - {X^k}} \right\|_F + } {\left( {\sqrt n  + 1} \right)}{(\lambda /\nu )}{\underline\gamma ^{ - 1}}\sum\limits_{k = K}^\infty  {\mu _k}  < \infty.
\end{align*}	
\end{proof}
\begin{pro}\label{pro:ac} Suppose that the  sequence $\left\{X^{k}\right\}$  generated by SPG algorithm is bounded. Then
any accumulation point of $\left\{X^{k}\right\}$ is a lifted stationary point of \eqref{lc}.
\end{pro}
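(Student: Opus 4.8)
The plan is to start from an accumulation point $\bar X$ of $\{X^{k}\}$, fix a subsequence $X^{k_i}\to\bar X$, and pass to the limit in the first‑order optimality condition of the proximal subproblem \eqref{eq:hx} solved at iteration $k_i$. First I would refine $\{k_i\}$ by several compactness extractions: since $\mathbb{D}^{n}$ is finite, assume $d^{k_i}\equiv\bar d$ is constant; since $\{\gamma_{k}\}$ is bounded (Lemma \ref{lem:wd}), assume $\gamma_{k_i}\to\bar\gamma\in[\underline\gamma,\max\{\bar\gamma,\rho L\}]$; since $\{\nabla\tilde f(X^{k_i},\mu_{k_i})\}$ is bounded (Lemma \ref{lem:pro}(i)), assume $\nabla\tilde f(X^{k_i},\mu_{k_i})\to\bar\xi$, which by $\mu_{k_i}\to0$ (Lemma \ref{lem:mu}(ii)) and the gradient consistency in Definition \ref{def:smoo}(iii) satisfies $\bar\xi\in\partial f(\bar X)$; and, writing $W^{k}=X^{k}-\gamma_{k}^{-1}\mu_{k}\nabla\tilde f(X^{k},\mu_{k})=U^{k}\mathscr{D}(w^{k})(V^{k})^{T}$, assume $(U^{k_i},V^{k_i})\to(\bar U,\bar V)$ by compactness of $\mathbb{Q}^{m}\times\mathbb{Q}^{n}$. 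From $\gamma_{k_i}^{-1}\mu_{k_i}\to0$ we get $W^{k_i}\to\bar X$, hence $\bar X=\bar U\mathscr{D}(\sigma(\bar X))\bar V^{T}$, i.e. $(\bar U,\bar V)\in\mathcal{M}(\bar X)$; from $\|X^{k+1}-X^{k}\|_{F}\to0$ (Lemma \ref{lem:pro}(ii)--(iii)) we get $X^{k_i+1}\to\bar X$, and $X^{k_i+1}=U^{k_i}\mathscr{D}(\hat x^{k_i+1})(V^{k_i})^{T}$ with $\hat x^{k_i+1}=\bm{prox}_{\tau_{k_i}\Phi^{\bar d}}(w^{k_i})$ by Theorem \ref{the:pro-gra-m}, where $\tau_{k_i}=\lambda\gamma_{k_i}^{-1}\mu_{k_i}\to0$. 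Finally, continuity of singular values gives $\sigma_{j}(X^{k_i})\to\sigma_{j}(\bar X)$, and a short case split on $\sigma_{j}(\bar X)<\nu$, $=\nu$, $>\nu$ (the borderline being automatic since $\mathcal{D}(\nu)=\{1,2\}$) shows $\bar d_{j}\in\mathcal{D}(\sigma_{j}(\bar X))$ for every $j$, so $\bar d$ is an admissible index vector for $\bar X$ in Definition \ref{def:ls}.

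Next I would write the stationarity of $X^{k_i+1}$ for $\min_{X}Q_{\bar d,\gamma_{k_i}}(X,X^{k_i},\mu_{k_i})$, namely $0\in\nabla\tilde f(X^{k_i},\mu_{k_i})+\gamma_{k_i}\mu_{k_i}^{-1}(X^{k_i+1}-X^{k_i})+\lambda\,\partial\Phi^{\bar d}(X^{k_i+1})$, and rewrite the first two terms, using the definition of $W^{k_i}$, as $\gamma_{k_i}\mu_{k_i}^{-1}(X^{k_i+1}-W^{k_i})=U^{k_i}\mathscr{D}(z^{k_i})(V^{k_i})^{T}$, where by the closed form \eqref{eq:y} each $z^{k_i}_{j}$ lies in $[-\lambda/\nu,0]$ and equals $0$ whenever $\bar d_{j}=2$. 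Extracting once more so that $z^{k_i}\to\bar z$, the right‑hand matrix converges to $\bar U\mathscr{D}(\bar z)\bar V^{T}$, and since $\Phi^{\bar d}$ is locally Lipschitz its Clarke subdifferential is outer semicontinuous, so $X^{k_i+1}\to\bar X$ forces $0\in\bar U\mathscr{D}(\bar z)\bar V^{T}+\lambda\,\partial\Phi^{\bar d}(\bar X)$. If one can identify $\bar U\mathscr{D}(\bar z)\bar V^{T}$ with an element $\xi\in\partial f(\bar X)$, then $0\in\partial f(\bar X)+\lambda\,\partial\Phi^{\bar d}(\bar X)$, and unwinding this inclusion through the spectral‑subdifferential formula $\partial\Phi^{\bar d}(\bar X)=\{U(\tfrac1\nu\mathscr{D}(g)-\sum_{i}\theta_{\bar d_i}^{\prime}(\sigma_{i}(\bar X))E_{i})V^{T}:(U,V)\in\mathcal{M}(\bar X),\,g\in\partial\|\sigma(\bar X)\|_{1}\}$ (the analogue of the formula quoted after \eqref{dc}) yields precisely condition \eqref{ls} at $\bar X$ with the vector $\bar d$ and the pair $(\bar U,\bar V)$; hence $\bar X$ is a lifted stationary point of \eqref{lc}.

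The crux is the cross term $\gamma_{k_i}\mu_{k_i}^{-1}(X^{k_i+1}-X^{k_i})$: by Lemma \ref{lem:pro}(ii) it is only known to be bounded (by $(\sqrt n+1)\lambda/\nu$), so it cannot simply be dropped, and the identification of $\bar U\mathscr{D}(\bar z)\bar V^{T}=\lim_i\bigl(\nabla\tilde f(X^{k_i},\mu_{k_i})+\gamma_{k_i}\mu_{k_i}^{-1}(X^{k_i+1}-X^{k_i})\bigr)$ with a subgradient of $f$ is the one genuinely delicate step. I expect to obtain it by showing the cross term tends to $0$ along $\{k_i\}$ — equivalently $\gamma_{k_i}\mu_{k_i}^{-1}(X^{k_i+1}-W^{k_i})\to\bar\xi$ — using the explicit componentwise form \eqref{eq:y} of the proximal map together with the estimate $\|W^{k_i}-X^{k_i}\|_{F}\le(\lambda/\nu)\gamma_{k_i}^{-1}\mu_{k_i}$ (which controls how $\sigma_{j}(W^{k_i})$ compares with $\sigma_{j}(X^{k_i})$) and the fact that the iterates do not gain rank; should the cross term not vanish, the same componentwise information instead pins down $\bar z$ (to $-\lambda/\nu$ on the indices with $\bar d_{j}=1$ and $\sigma_{j}(\bar X)>0$, to $0$ on those with $\bar d_{j}=2$, and to $[-\lambda/\nu,0]$ on the rest, which then must have $\sigma_{j}(\bar X)=0$), so that $\bar U\mathscr{D}(\bar z)\bar V^{T}$ has exactly the diagonal structure $U^{T}\xi V+\tfrac\lambda\nu\mathscr{D}(g)$ demanded by \eqref{ls}. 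This step parallels the corresponding one for the vector problem in \cite{BC20}; everything else — the compactness extractions, the admissibility of $\bar d$, and the applicability of the outer semicontinuity of $\partial\Phi^{\bar d}$ at $\bar X$ even when $\sigma(\bar X)$ has repeated or zero entries — is routine once Lemmas \ref{lem:wd}, \ref{lem:mu} and \ref{lem:pro} are available.
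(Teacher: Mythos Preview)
Your overall strategy matches the paper's: write the first–order optimality condition of the prox subproblem at $X^{k_i+1}$, extract a subsequence along which $d^{k_i}\equiv\bar d\in\prod_{j}\mathcal{D}(\sigma_j(\bar X))$, and pass to the limit using gradient consistency (Definition \ref{def:smoo}(iii)) for the $\nabla\tilde f$ term and outer semicontinuity of $\partial\Phi^{\bar d}$ for the $\zeta$ term, arriving at $\bar\xi+\lambda\bar\zeta^{\bar d}=0$, i.e., \eqref{ls} with the vector $\bar d$.

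The paper's proof is much shorter because it does not track the SVD factors $(U^{k_i},V^{k_i})$, the scalars $\gamma_{k_i}$, or the diagonal vectors $z^{k_i}$ at all; when it takes the limit in \eqref{eq:fo} it simply cites \eqref{eq:ac}, namely $\|X^{k_i+1}-X^{k_i}\|_F\to0$, to dispose of the middle term $\gamma_{k_i}\mu_{k_i}^{-1}(X^{k_i+1}-X^{k_i})$. You correctly observe that Lemma \ref{lem:pro}(ii) gives only the uniform bound $(\sqrt n+1)\lambda/\nu$ on this quantity, not convergence to $0$; the paper glosses over this point, whereas your proposal explicitly flags it and offers two routes (force the term to vanish, or pin down its limit via the closed form \eqref{eq:y} from Theorem \ref{the:pro-gra-m}). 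In that sense your argument is more careful than the paper's at the one delicate step. Your additional compactness extractions of $(U^{k_i},V^{k_i})$ and $\gamma_{k_i}$ are unnecessary for the paper's abstract route but are exactly what your componentwise fallback argument needs, so they are not wasted.
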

\begin{proof}
Suppose that $\bar{X}$ is an accumulation  point of any convergence subsequent $\left\{X^{k_{i}}\right\}$. By Lemma \ref{lem:pro}-(iii), we have
\begin{equation}
\sum\limits_{i = 0}^\infty  {\left\| {{X^{{k_i} + 1}} - {X^{{k_i}}}} \right\|_F}  \le \sum\limits_{k = 0}^\infty  {\left\| {{X^{k + 1}} - {X^k}} \right\|_F}  < \infty,
\end{equation}
which
implies that
\begin{equation}\label{eq:ac}
\lim _{i \rightarrow \infty}\left\|X^{k_{i}+1}-X^{k_{i}}\right\|_F=0 \text{ and } \lim _{i \rightarrow \infty} X^{k_{i}+1}=\bar{X}.	
\end{equation}

Recalling $X^{k_{i}+1}=\hat{X}^{k_{i}+1}$ defined in \eqref{eq:hx} and by first-order optimality condition, we have
\begin{equation}\label{eq:fo}
\nabla \tilde{f}\left(X^{k_{i}}, \mu_{k_{i}}\right)+\gamma_{k_{i}} \mu_{k_{i}}^{-1}\left(X^{k_{i}+1}-X^{k_{i}}\right)+\lambda \zeta^{k_{i}}= 0, \forall \zeta^{k_{i}} \in \partial \Phi^{d^{k_{i}}}\left(X^{k_{i}+1}\right).
\end{equation}

Since the elements in $\left\{d^{k_{i}}: i \in \mathbb{N}\right\}$ are finite and $\lim\limits _{i \rightarrow \infty} X^{k_{i}+1}=\bar{X}$, there exists a subsequence of $\left\{k_{i}\right\}$, denoted as $\left\{k_{i_{j}}\right\}$, and $\bar{d} \in \mathcal{D}(\sx)$ such that $d^{k_{i_{j}}}=\bar{d},\, \forall j \in \mathbb{N}$. By
the definition of $\partial \Phi^{\bar{d}}$ and $\lim\limits_{j \rightarrow \infty} X^{k_{i_{j}}+1}=\bar{X}$, it gives
\begin{equation}\label{eq:sem}
\left\{\lim_{j \rightarrow \infty} \zeta^{k_{i_{j}}}: \zeta^{k_{i_{j}}} \in \partial \Phi^{d^{k_{i_{j}}}}\left(X^{k_{i_{j}}+1}\right)\right\} \subseteq \partial \Phi^{\bar{d}}(\bar{X}).	
\end{equation}
Along with the subsequence $\left\{k_{i_{j}}\right\}$ and letting $j \rightarrow \infty$ in \eqref{eq:fo}, from Definition \ref{def:smoo}-(iii), \eqref{eq:ac} and \eqref{eq:sem}, we obtain that there exist $\bar{\xi} \in \partial f(\bar{X})$ and $\bar{\zeta}^{\bar{d}} \in \partial \Phi^{\bar{d}}(\bar X)$ such that
\begin{equation}\label{eq:st}
\bar{\xi}+\lambda \bar{\zeta}^{\bar{d}} = 0.	
\end{equation}
By $\bar{d} \in \mathcal{D}(\sx)$ and the definition of $\Phi^{\bar{d}}$ in \eqref{eq:Phi_d}, \eqref{eq:st} implies that $\bar{X}$ is a lifted stationary point of \eqref{lc}.
\end{proof}


\begin{theorem}\label{tho:gc}
	Suppose that the sequence $\left\{X^{k}\right\}$ generated by SPG algorithm is bounded. Then $\left\{X^{k}\right\}$  is globally convergent to a lifted stationary point of \eqref{lc}, i.e., there exists a lifted stationary point $\bar{X}$ of \eqref{lc} such that $\lim\limits_{k \rightarrow \infty} X^{k}=\bar{X}$.
\end{theorem}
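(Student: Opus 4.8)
The plan is to upgrade the subsequential convergence of Proposition~\ref{pro:ac} to convergence of the whole sequence by showing the sequence $\{X^k\}$ has finite length, which was already essentially proved in Lemma~\ref{lem:pro}-(iii). First I would observe that the boundedness hypothesis guarantees the accumulation set of $\{X^k\}$ is nonempty, and by Proposition~\ref{pro:ac} every accumulation point is a lifted stationary point of \eqref{lc}. Hence it suffices to show that $\{X^k\}$ converges, i.e. has a unique accumulation point.

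The key observation is that Lemma~\ref{lem:pro}-(iii) states $\sum_{k=0}^\infty \|X^{k+1}-X^k\|_F < \infty$. A sequence whose consecutive differences are summable is automatically a Cauchy sequence: for any $p < q$, $\|X^q - X^p\|_F \le \sum_{k=p}^{q-1}\|X^{k+1}-X^k\|_F$, and the right-hand side is the tail of a convergent series, hence tends to $0$ as $p \to \infty$. Therefore $\{X^k\}$ is Cauchy in the complete space $\mathbb{R}^{m\times n}$ and converges to some limit $\bar X$. Since $\bar X$ is then the unique accumulation point of $\{X^k\}$, Proposition~\ref{pro:ac} applies to $\bar X$, so $\bar X$ is a lifted stationary point of \eqref{lc}, and $\lim_{k\to\infty} X^k = \bar X$, as claimed.

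I do not anticipate a serious obstacle here, since the heavy lifting (the summability estimate, which in turn relies on Lemma~\ref{lem:mu}-(i) that $\sum_k \mu_k < \infty$, and Lemma~\ref{lem:pro}-(ii) that $\|X^{k+1}-X^k\|_F \le (\sqrt n+1)(\lambda/\nu)\gamma_k^{-1}\mu_k \le (\sqrt n+1)(\lambda/\nu)\underline{\gamma}^{-1}\mu_k$) has been carried out already. The only point requiring a line of care is making explicit that finite length implies Cauchy implies convergent; this is the standard argument above. One might also remark that the conclusion does not need the more delicate Kurdyka--Łojasiewicz machinery precisely because the stepsizes $\gamma_k^{-1}\mu_k$ are driven to zero fast enough by the smoothing-parameter update rule \eqref{ag:mu}, which forces $\sum_k \mu_k<\infty$; this is what makes the finite-length property available directly.
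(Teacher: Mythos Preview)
Your proposal is correct and follows essentially the same approach as the paper: both arguments combine Lemma~\ref{lem:pro}-(iii) (summability of $\|X^{k+1}-X^k\|_F$) with Proposition~\ref{pro:ac} to obtain global convergence. The only cosmetic difference is that the paper first fixes a subsequential limit $\bar X$ and then uses the triangle inequality $\|X^{t+s+1}-\bar X\|_F \le \|X^{t}-\bar X\|_F + \sum_{k=t}^{t+s}\|X^{k+1}-X^k\|_F$ to pull the whole sequence toward it, whereas you argue directly that summable increments make $\{X^k\}$ Cauchy; these are two standard phrasings of the same finite-length argument.
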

\begin{proof}
Suppose that $\left\{X^{k_{j}}\right\}$ is a convergent subsequence of $\left\{X^{k}\right\}$ with
	$\lim\limits _{j \rightarrow \infty} X^{k_{j}}=\bar{X}.	$
	By Proposition \ref{pro:ac}, $\bar{X}$ is a lifted stationary point of \eqref{lc}.
	
	For any $t, s \in \mathbb{N},$ we have
	\begin{equation}\label{3.12-4}
	\left\|X^{t+s+1}-\bar{X}\right\|_F \leq\left\|X^{t}-\bar{X}\right\|_F+\sum_{k=t}^{t+s}\left\|X^{k+1}-X^{k}\right\|_F.	
	\end{equation}
	Foe any given $\epsilon>0$, there exists $K_{1} >0$ such that
	\begin{equation}\label{3.12-5}
	\left\|X^{k_{j}}-\bar{X}\right\|_F \leq \epsilon / 2, \quad \sum_{k=k_{j}}^{\infty}\left\|X^{k+1}-X^{k}\right\|_F \leq \epsilon /2,\quad\forall k_{j} \geq K_{1}.
	\end{equation}
	Here the first inequality dues to $\lim\limits _{j \rightarrow \infty} X^{k_{j}}=\bar{X}$ and the second inequality comes from Lemma \ref{lem:pro}-(iii).
	
	By letting $t=\bar k_{j}\geq K_1$ in \eqref{3.12-4} and  from \eqref{3.12-5}, we obtain $\left\|X^{k}-\bar{X}\right\|_F \leq \epsilon,\, \forall k >$ $K_{1}+1$. From the arbitrariness of $\epsilon>0$,  $\lim\limits_{k \rightarrow \infty} X^{k}=\bar{X}$ follows.
\end{proof}

\section{Numerical experiments}

In this section we conduct numerical experiments to test the performance of the SPG method.
In particular, we apply it to solve the problem \eqref{l0} with $ f(X)=\|P_\Omega\left(X-M \right) \|_1 $, that is,
\begin{equation}\label{NS}
\min_{X \in \re^{m \times n}} {\F_{l_0}(X):=\|P_\Omega\left(X-M \right) \|_1+\lambda\|\sigma(X)\|_0}.
\end{equation}

We conduct extensive experiments to evaluate our method and then comparing it with some existing methods, including FPCA \cite{MGC11}, SVT \cite{CCS08} and VBMFL1 \cite{ZMXZY15}. The platform is Matlab R2014a under Windows 10 on a desktop of a 3.2GHz CPU and 8GB memory. We adopt the root-mean-square error (RMSE) as evaluation metrics
\begin{align*}
RMSE:=\sqrt{\frac{\|X^*-M\|_{F}^2}{mn}},
\end{align*}
and the final performance of each simulation is evaluated by obtaining an ensemble average of the relative error with $ T $ independent Monte Carlo runs.

In the simulation, a typical two-component Gaussian mixture model (GMM) is used as the non-Gaussian noise model. The probability density function (PDF) of GMM is defined as
$$
p_{v}(i)=(1-c) N\left(0, \sigma_{A}^{2}\right)+c N\left(0, \sigma_{B}^{2}\right),
$$
where $N\left(0, \sigma_{A}^{2}\right)$ represents general the noise disturbance with variance $\sigma_{A}^{2}$, and $N\left(0, \sigma_{B}^{2}\right)$ stands for outliers that occur occasionally with a large variance $\sigma_{B}^{2}$. The variable $c$ controls the occurrence probability of outliers.

\subsection{Random Matrix Completion}

In this subsection, we aim to recover a random matrix $M \in \R^{m \times n}$ with rank $ r $ based on a subset of entries $\left\{M_{ij}\right\}_{(i,j) \in \Omega}$. In detail, we first generate random matrices $ {M_L} = unifrnd(-0.1,0.3,m,r) \in {R^{m \times r}} $ and $ {M_R} = unifrnd(-0.1,0.3,n,r) \in {R^{n \times r}} $, then let $M=M_{L} M_{R}^{T}$. We then sample a subset with sampling ratio $SR$ uniformly at random, where $SR=|\Omega| /(m n)$. In our experiment, we set $ m = n$. The GMM noise are set at $\sigma_{A}^{2}=0.0001, \sigma_{B}^{2}=0.1, c=0.1$. The rank $ r $ is set to 30 and the sampling ratio $SR$ is set to 0.8. For each simulation, an average relative error is obtained via 100 Monte Carlo runs with different realizations of $M,\,\Omega$ and noise.

The performance is firstly compared for different $ \mu_0 $ under different $ \mu_k $ iterative methods in step 3 of SPG algorithm. We compare different $ \mu_0 $ under $ \alpha = 0.8 $ and $ \alpha = +\infty $ \footnote{$ \alpha = +\infty $ means $ \mathcal{N}^{s}=\{1,2,\ldots\} $.}. $ \mu_0 $ increases from 10 to 100 with increment 10 and the size of the square matrix $ m $ is set to $ 150 $. 
From Figure \ref{fig:mu}, we can see that the larger $ \mu_0 $ becomes, the smaller the value of RMSE, but the more time and iteration steps it costs. It can also be observed that $ \alpha=0.8 $ has the better performance than $ \alpha = \infty $. This is because when $ \alpha=0.8 $, it is necessary to reduce $ \mu_k $
when \eqref{ag:mu_if} is not satisfied. When $ \alpha = \infty $, no matter how much $ \tilde{\mathcal{F}}\left(X^{k+1}, \mu_{k}\right)+\kappa \mu_{k} $ decreases, $ \mu_k $ will be reduced. It can be seen that the SPG algorithm can accelerate the convergence speed by adjusting the strategy of $ \mu_k $ in step 3.

\begin{figure}[htbp]
	\centering
	\begin{subfigure}[b]{1\linewidth}
		\begin{subfigure}[b]{0.32\linewidth}
			\centering
			\includegraphics[width=\linewidth]{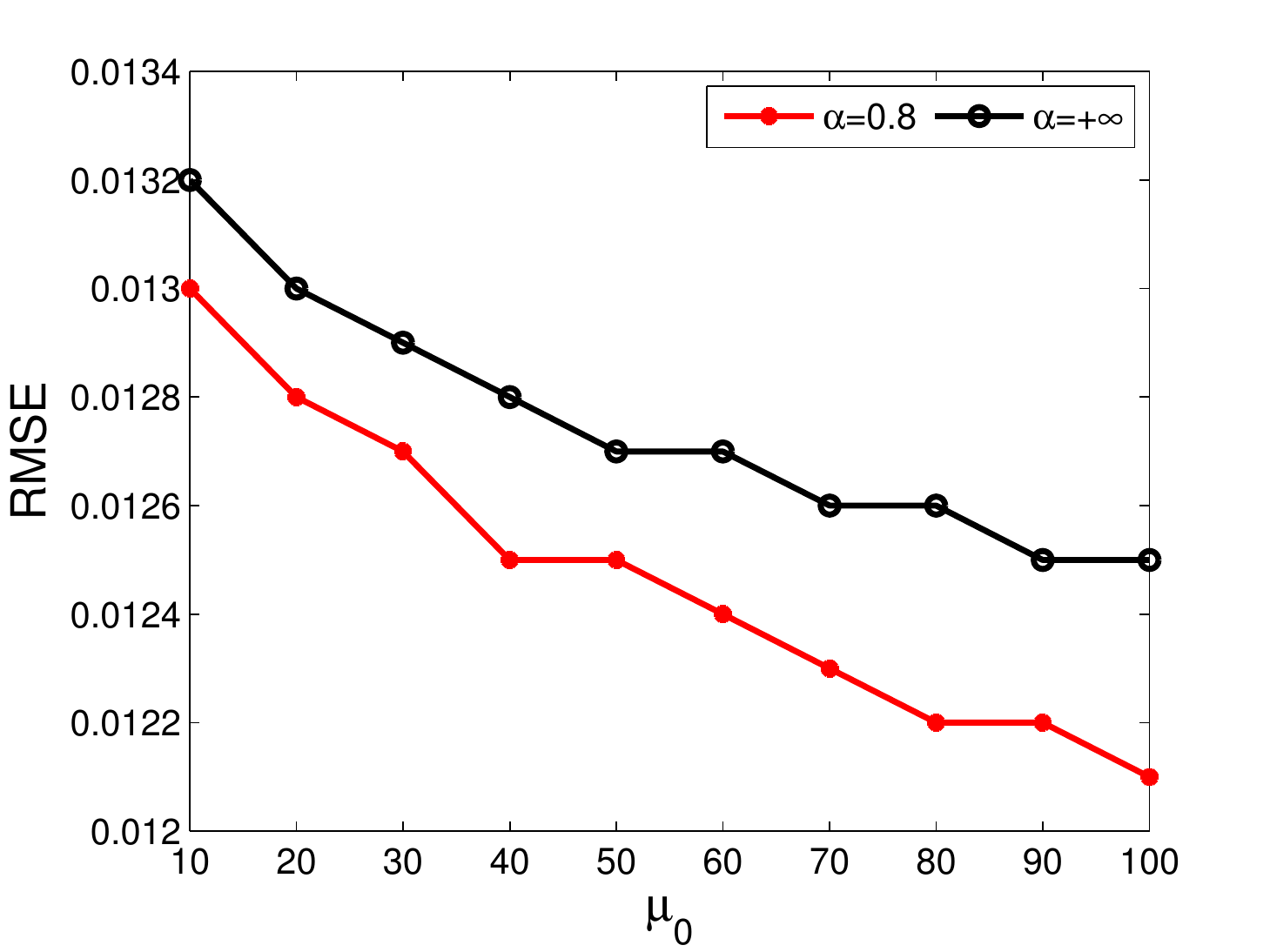}
		\end{subfigure}  	
		\begin{subfigure}[b]{0.32\linewidth}
			\centering
			\includegraphics[width=\linewidth]{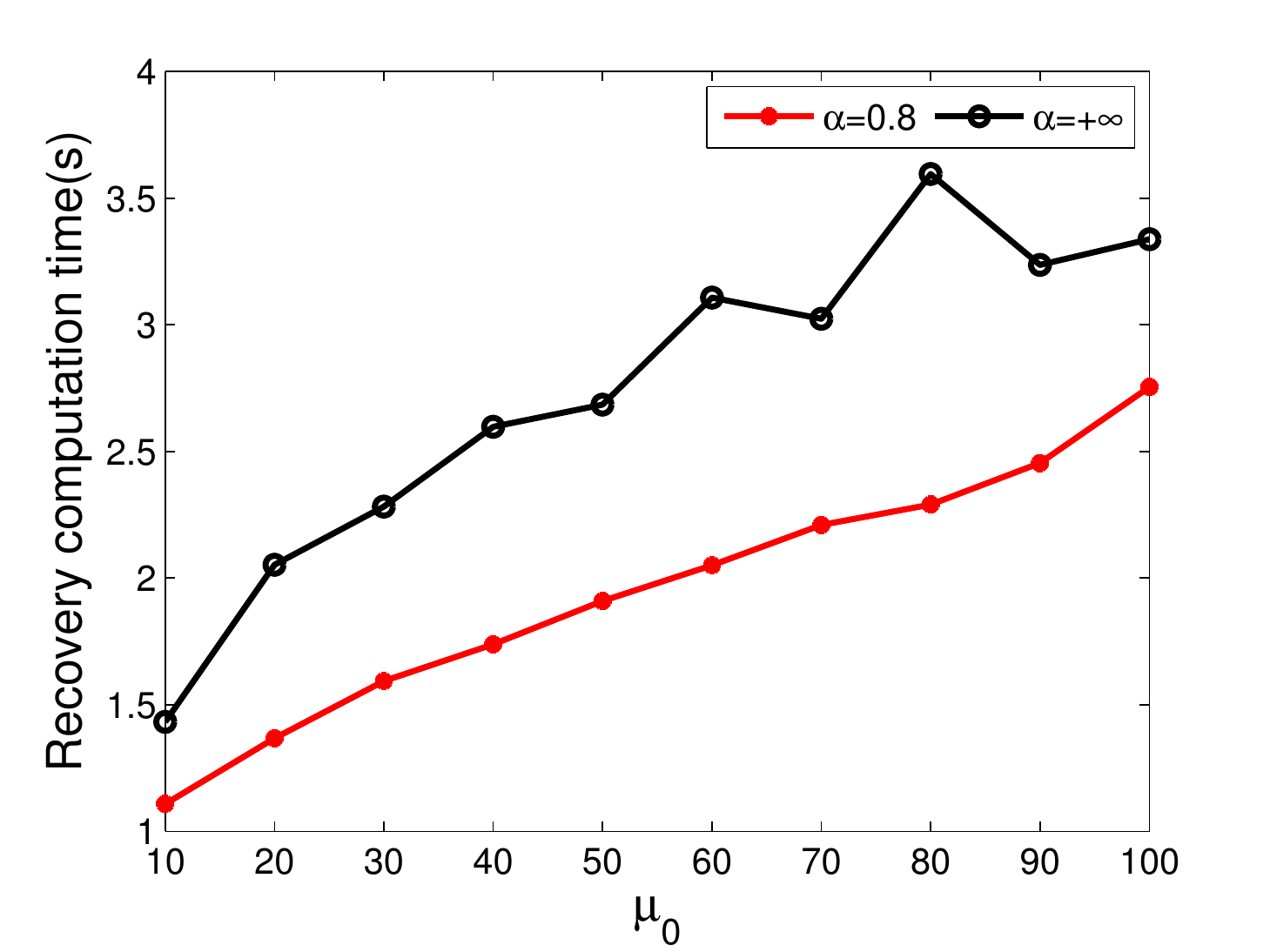}
		\end{subfigure}
			\begin{subfigure}[b]{0.32\linewidth}
		\centering
		\includegraphics[width=\linewidth]{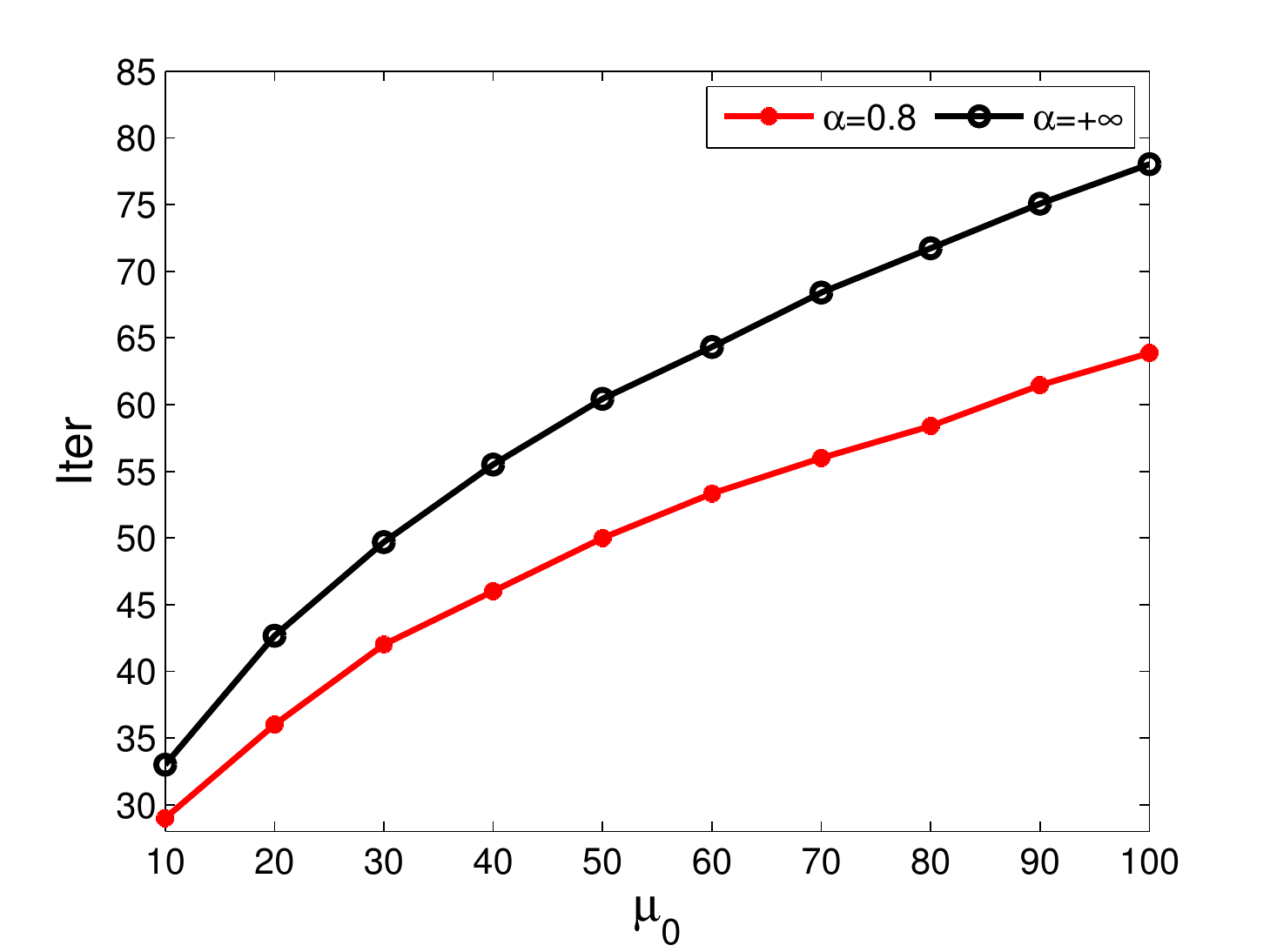}
	\end{subfigure}
	\end{subfigure}
	\vfill
	\caption{Curves of RMSE, average running times and average number of running iterations with different $ \mu_0 $.}
	\label{fig:mu}
\end{figure}

Secondly, the performance of the algorithms for different sizes of completion problems. The size of the square matrix $ m $ increases from 100 to 200 with increment 10. 
Figure \ref{fig:size} shows the curves of the average RMSE and running times in terms of different matrix sizes $ m $. As can be seen from Figure \ref{fig:size}, the SPG algorithm achieves comparably lower
average RMSE than the other algorithms, while FPCA and SVT algorithms based on $ l_2 $ norm have higher RMSE values. Moreover, as
the size of the matrix increases, the average RMSE values decrease for all algorithms. With the increase of matrix size, the average running time of all algorithms increases gradually, but the average running time of SPG algorithm is the least. The average running time of VBMFL1 algorithm based on $ l_1 $ norm increases much faster than the other three algorithms. In summary, SPG algorithm performs best in four algorithms.

\begin{figure}[htbp]
	\centering
	\begin{subfigure}[b]{1\linewidth}
		\begin{subfigure}[b]{0.48\linewidth}
			\centering
			\includegraphics[width=\linewidth]{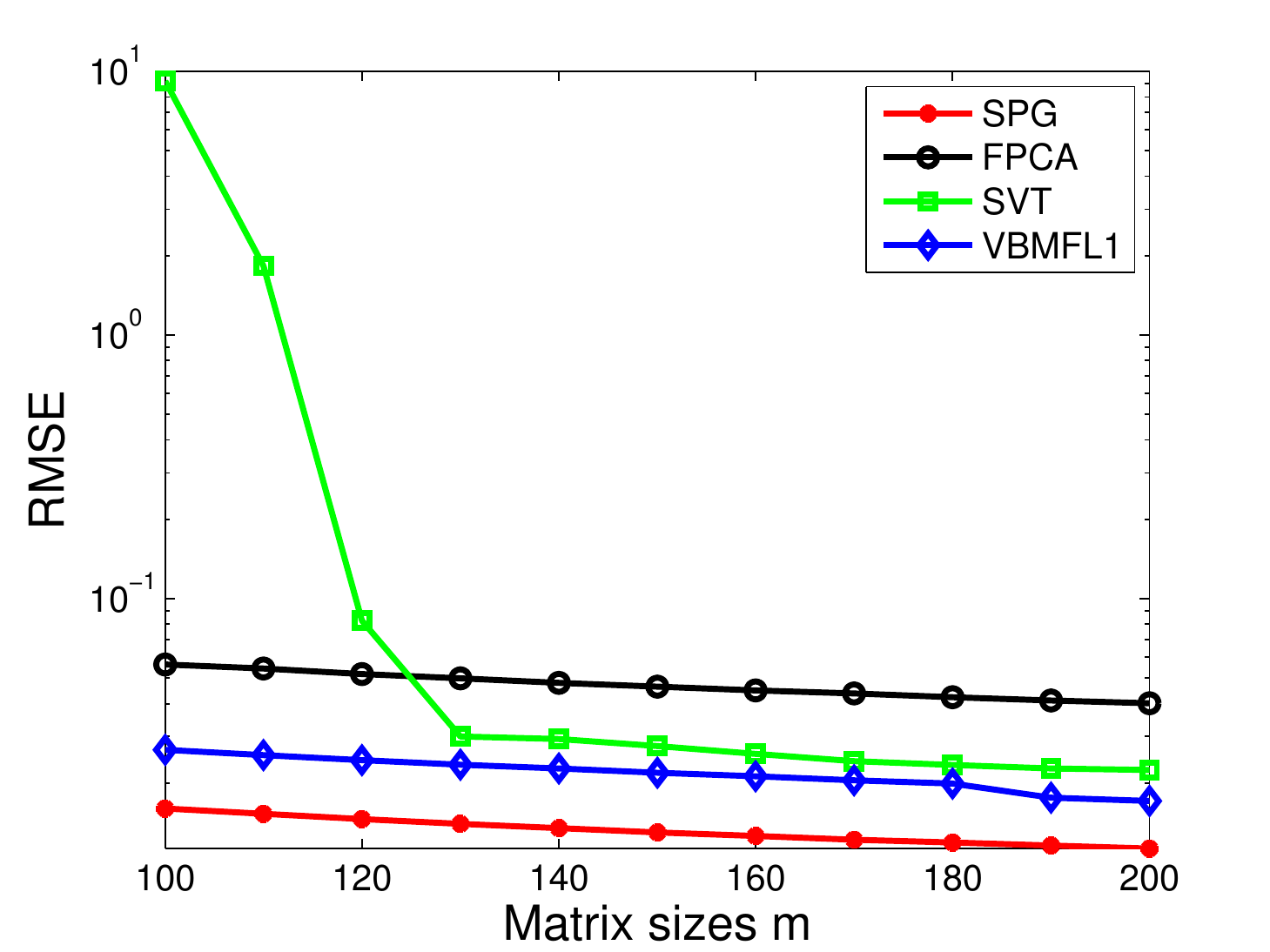}
		\end{subfigure}  	
		\begin{subfigure}[b]{0.48\linewidth}
			\centering
			\includegraphics[width=\linewidth]{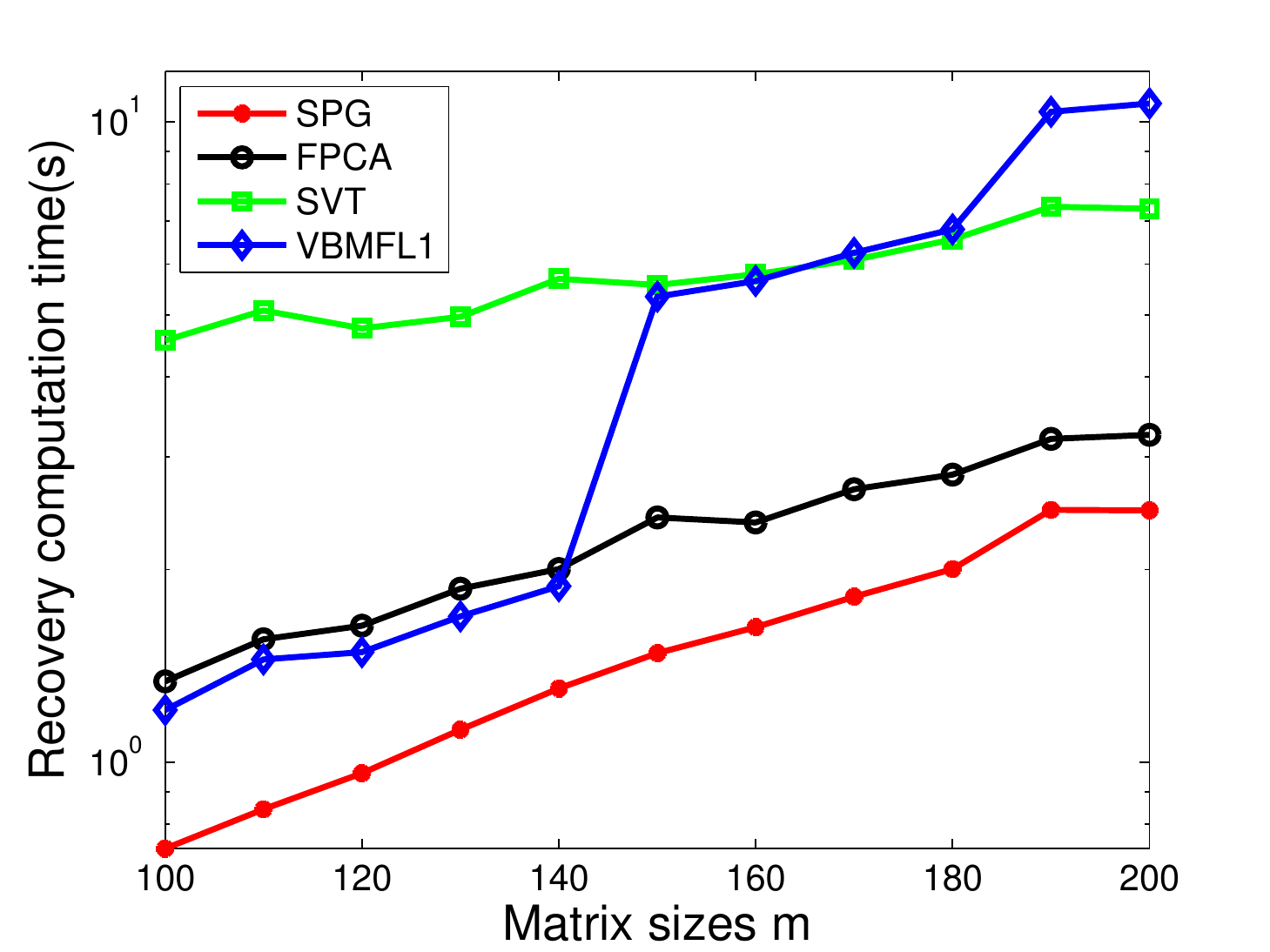}
		\end{subfigure}
	\end{subfigure}
	\vfill
	\caption{Curves of RMSE and average running times with different matrix size $ m $.}
	\label{fig:size}
\end{figure}


\subsection{Image Inpainting}
In this subsection, the performance of the algorithms is compared for some image inpainting tasks with non-Gaussian noise. Note that grayscale images can be expressed as matrix. When the matrix data is of low-rank, or numerical low-rank, the image inpainting problem can be modeled as matrix completion problem. To evaluate the algorithm performances under
non-Gaussian noise, a mixture of Gaussian is selected for the noise model. We adopt the peak signal-to-noise ratio (PSNR) as evaluation metrics, which is defined by
$$
\mathrm{PSNR}:=10 \log _{10}\left(\frac{mn}{\|X^*-M\|_{F}^{2}}\right).
$$
A higher PSNR represents better recovery performance.

We use the USC-SIPI image database\footnote{http://sipi.usc.edu/database/} to evaluate our method for image inpainting. In our test, we randomly select $ 6 $ images from this database for testing and the images are normalized in the range $\left[0,1\right]$. 
In Figure \ref{fig:images-mix-gau}, we consider the case where entries are missing at random by sampling ratio $ SR = 0.9 $. The GMM noise are set at $\sigma_{A}^{2}=0.001, \sigma_{B}^{2}=0.1, c=0.1$. From Figure \ref{fig:images-mix-gau}, we can see that the image restored by FPCA and SVT algorithm with $ l_2 $ norm is very blurred, while the image restored by SPG and VBMFL1 algorithm with $ l_1 $ norm is relatively clear, indicating that the recovery effect of $ l_1 $ norm for non-Gaussian noise is better than that of $ l_2 $ norm.
In the image restored by VBMFL1 algorithm, the recovery effect is not good for those isolated small pixels, especially in ``Chart". It may be that these abnormal small pixels are treated as outliers. The image restored by SPG algorithm performs well in all aspects. At the same time, in order to compare the recovery effect of the five algorithms more clearly, we give the PSNR and running time of the four algorithms in Table \ref{tab:image-gmm}. It can be seen from the table that the  VBMFL1 algorithm based on $ l_1 $ norm have higher PSNR values than the FPCA and SVT algorithms based on $ l_2 $ norm, but the running time is much longer. SPG algorithm has the highest PSNR value and short running time.  We can assert that SPG is the best of the four algorithms.

\begin{figure}[htbp]
	\centering
	\begin{subfigure}[b]{1\linewidth}
		\begin{subfigure}[b]{0.161\linewidth}
			\centering
			\includegraphics[width=\linewidth]{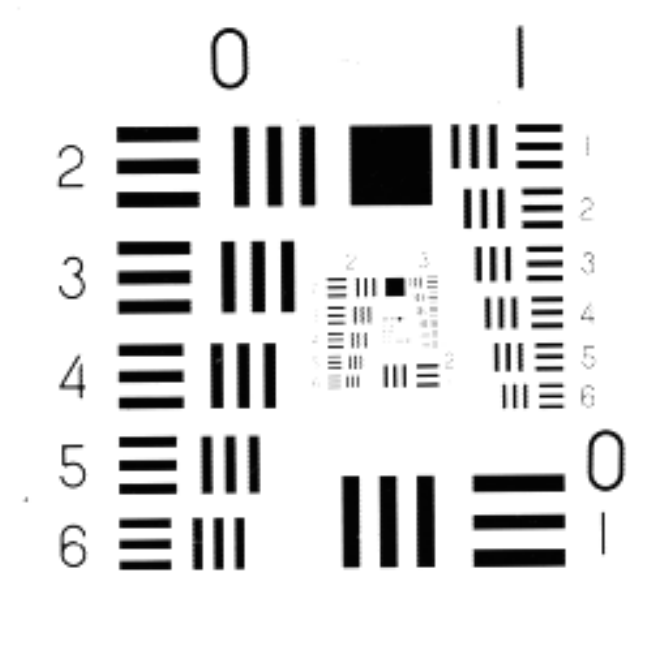}\vspace{0pt}
			\includegraphics[width=\linewidth]{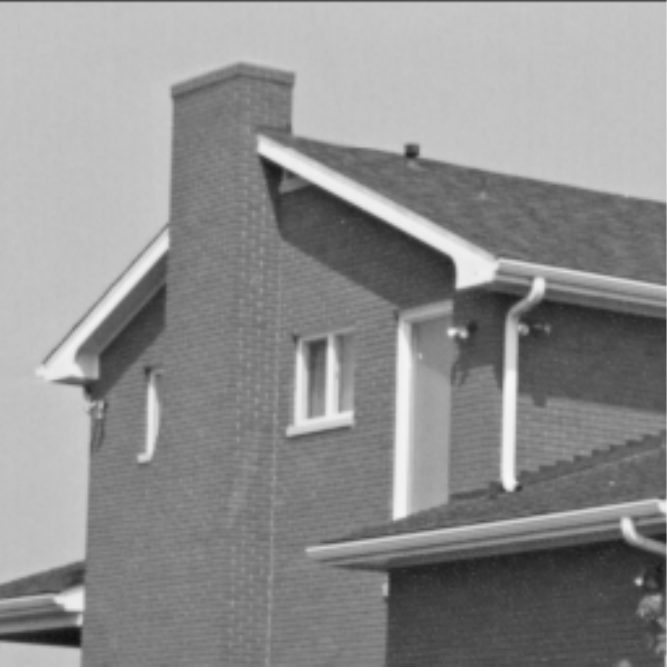}\vspace{0pt}
			\includegraphics[width=\linewidth]{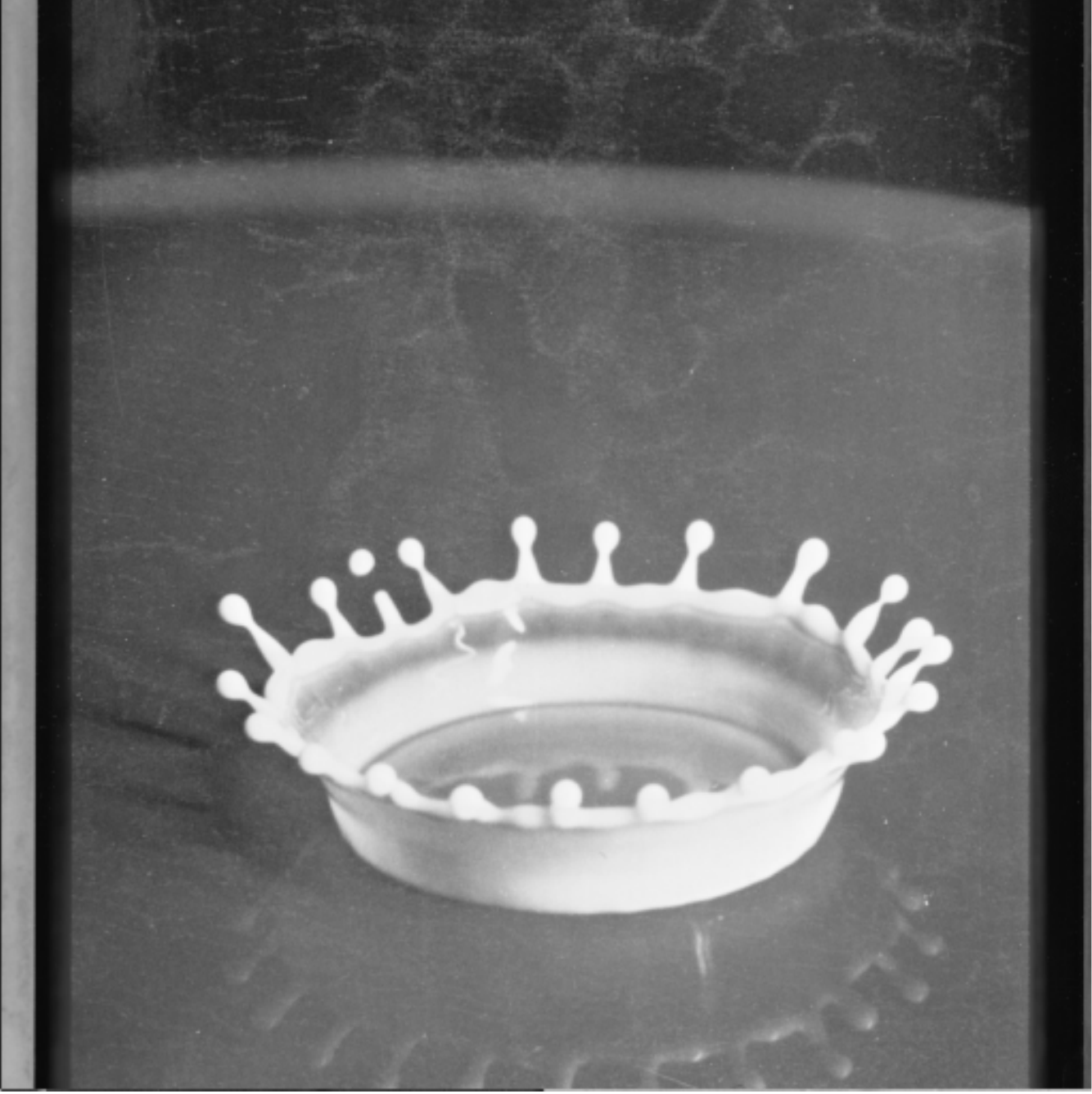}
			\caption{\tiny{Original}}
		\end{subfigure}
		\begin{subfigure}[b]{0.161\linewidth}
			\centering
			\includegraphics[width=\linewidth]{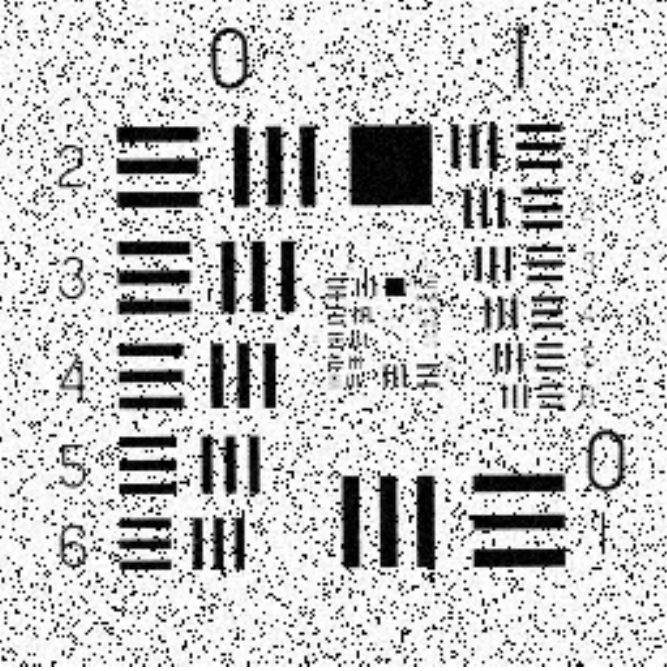}\vspace{0pt}
			\includegraphics[width=\linewidth]{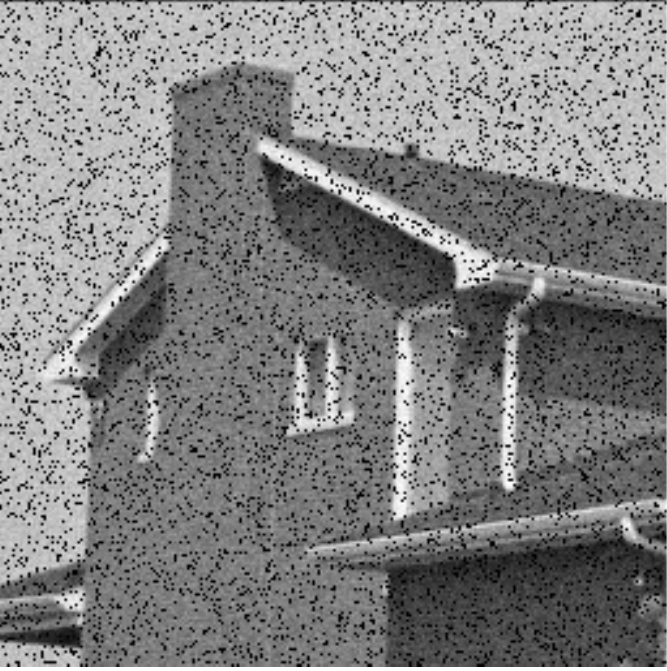}\vspace{0pt}
			\includegraphics[width=\linewidth]{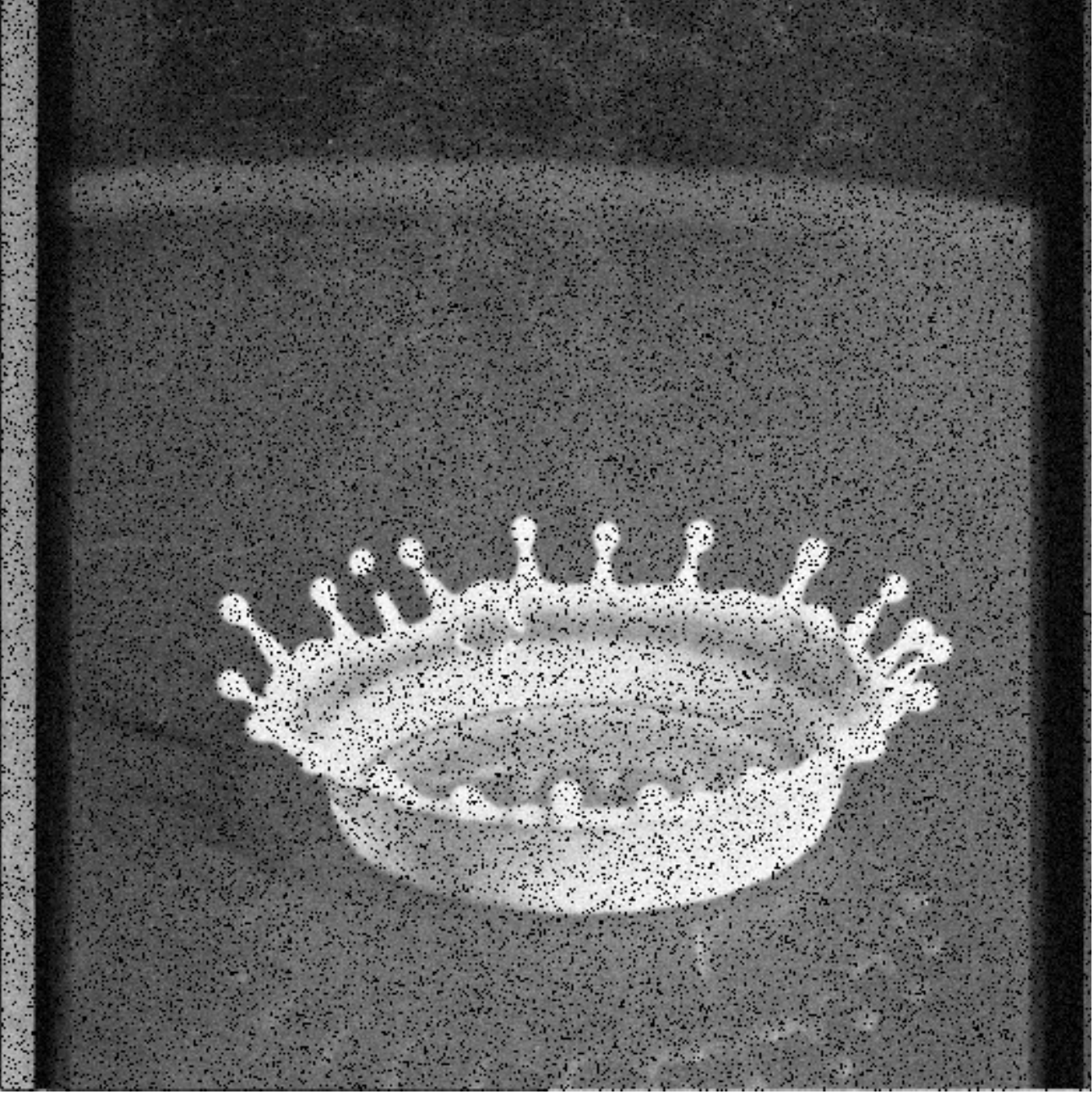}
			\caption{\tiny{Observation}}
		\end{subfigure}
		\begin{subfigure}[b]{0.161\linewidth}
			\centering
			\includegraphics[width=\linewidth]{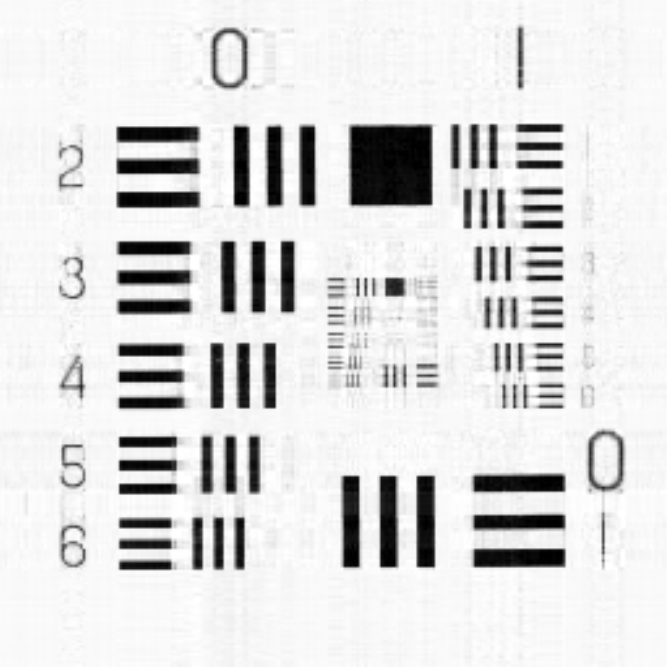}\vspace{0pt}
			\includegraphics[width=\linewidth]{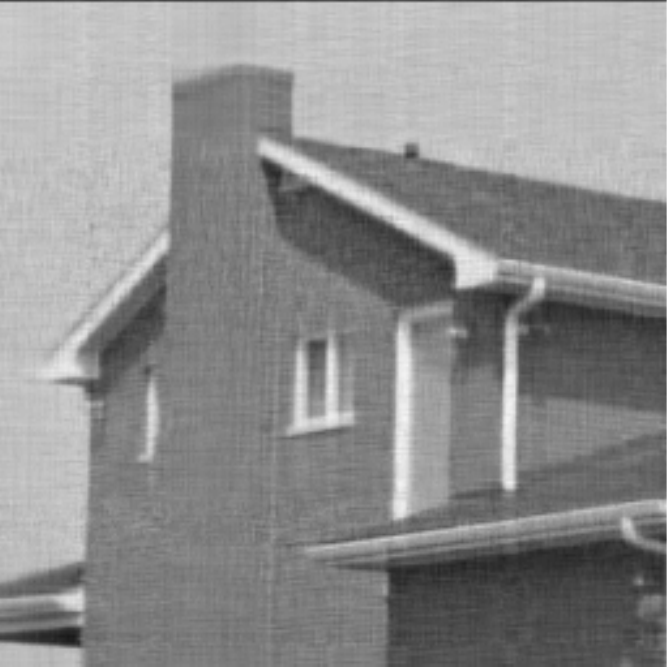}\vspace{0pt}
			\includegraphics[width=\linewidth]{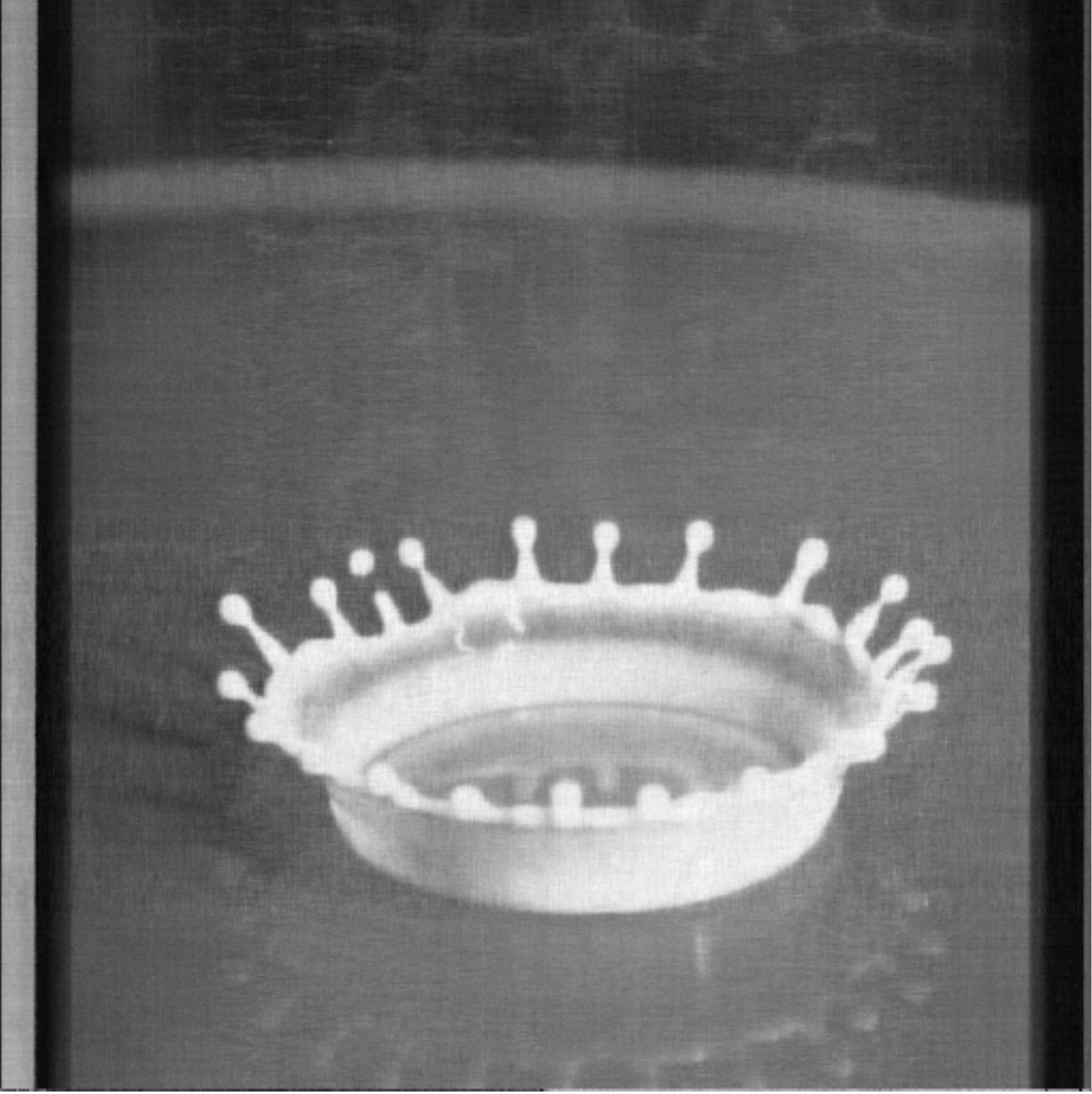}
			\caption{\tiny{SPG}}
		\end{subfigure}
		\begin{subfigure}[b]{0.161\linewidth}
			\centering
			\includegraphics[width=\linewidth]{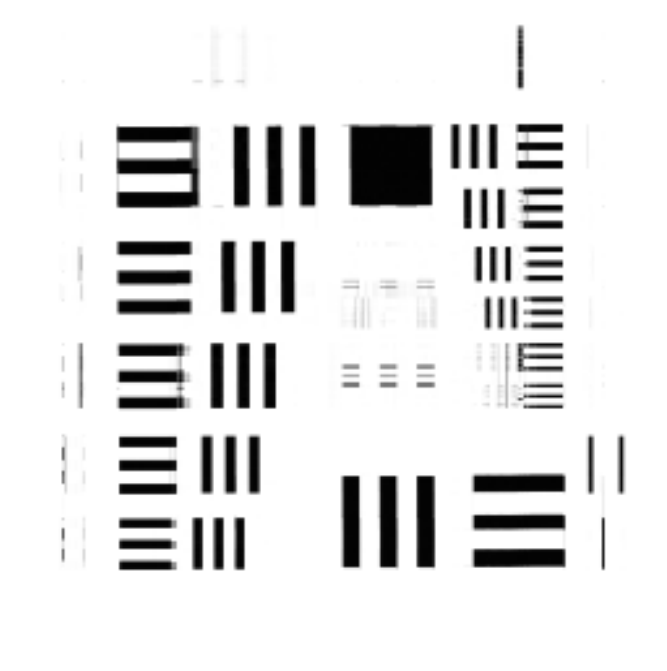}\vspace{0pt}
			\includegraphics[width=\linewidth]{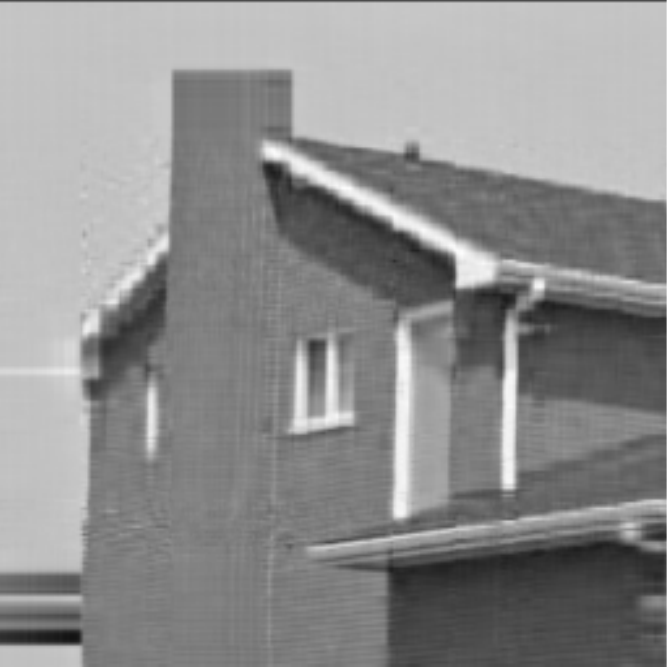}\vspace{0pt}
			\includegraphics[width=\linewidth]{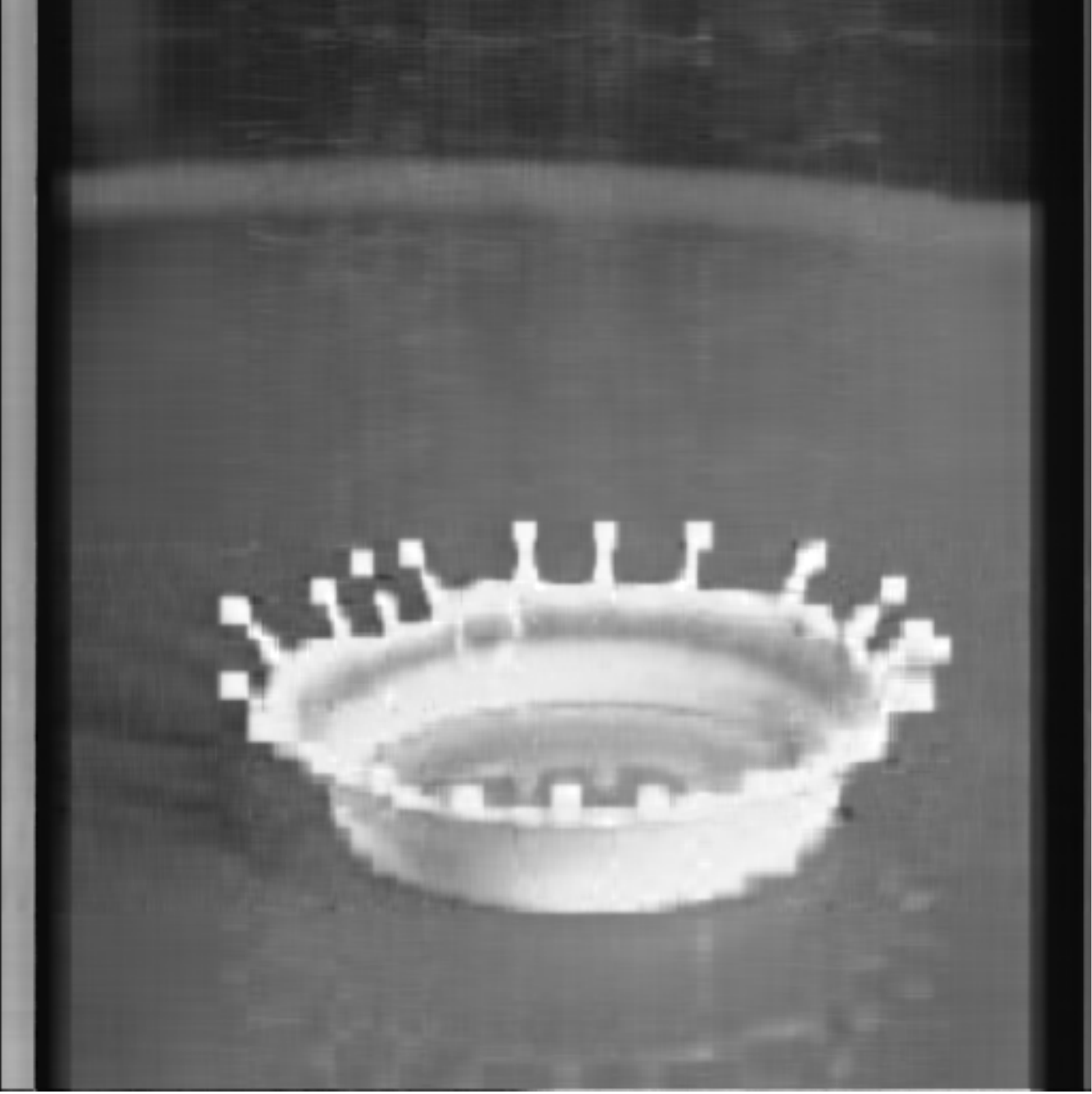}
			\caption{\tiny{VBMFL1}}
		\end{subfigure}
		\begin{subfigure}[b]{0.161\linewidth}
			\centering
			\includegraphics[width=\linewidth]{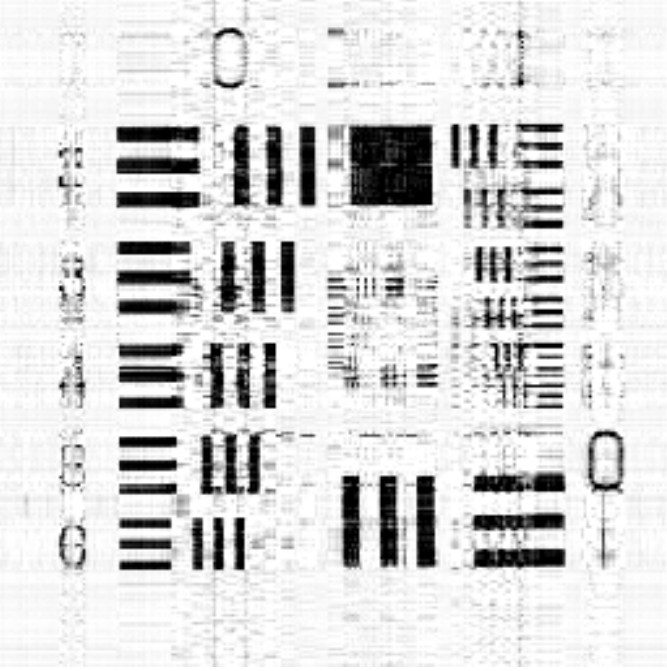}\vspace{0pt}
			\includegraphics[width=\linewidth]{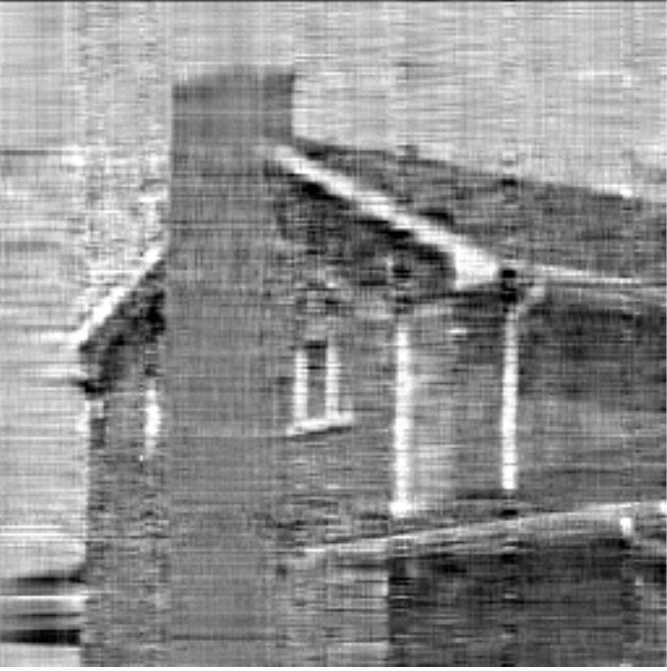}\vspace{0pt}
			\includegraphics[width=\linewidth]{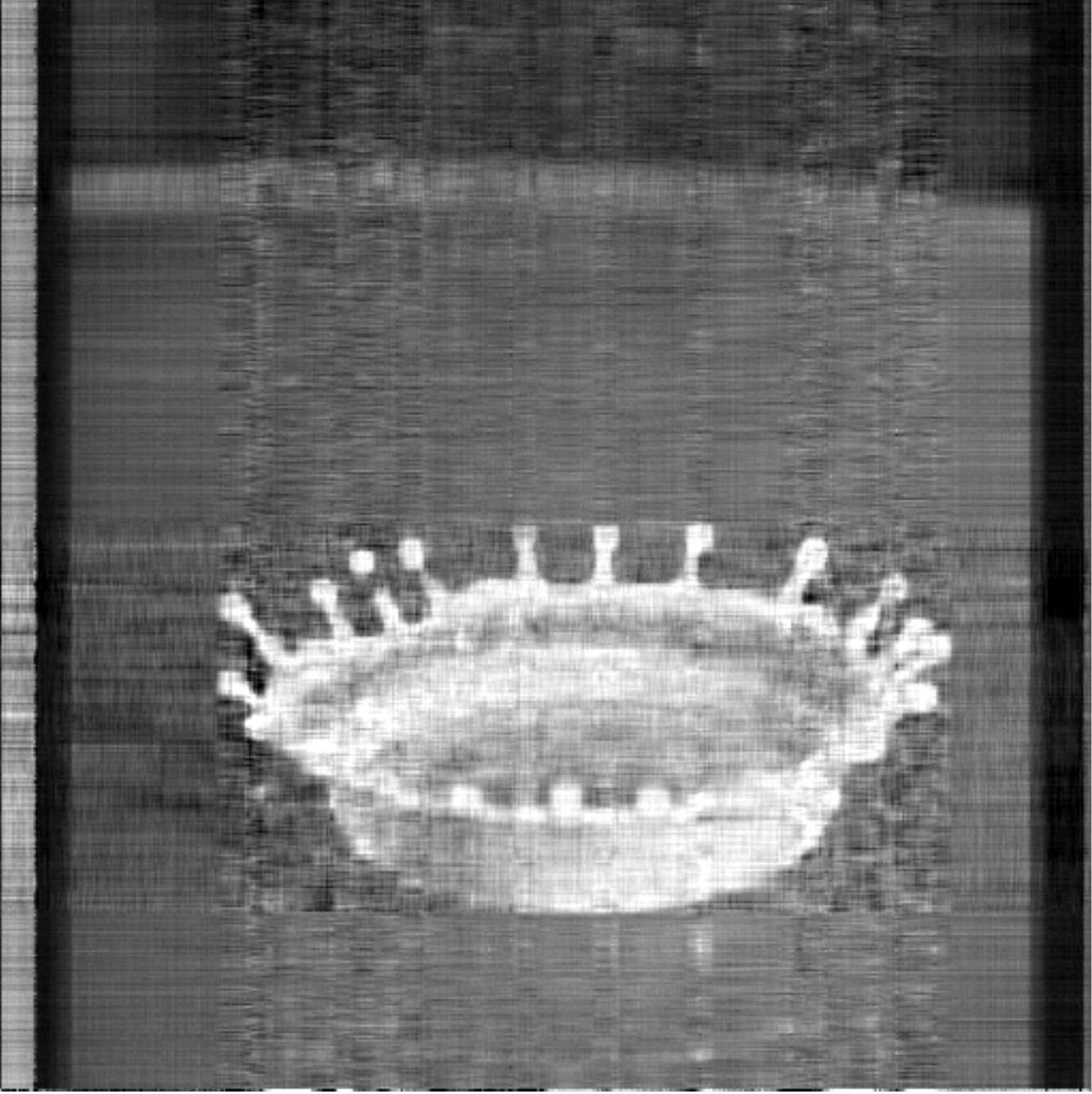}
			\caption{\tiny{FPCA}}
		\end{subfigure}
		\begin{subfigure}[b]{0.161\linewidth}
			\centering
			\includegraphics[width=\linewidth]{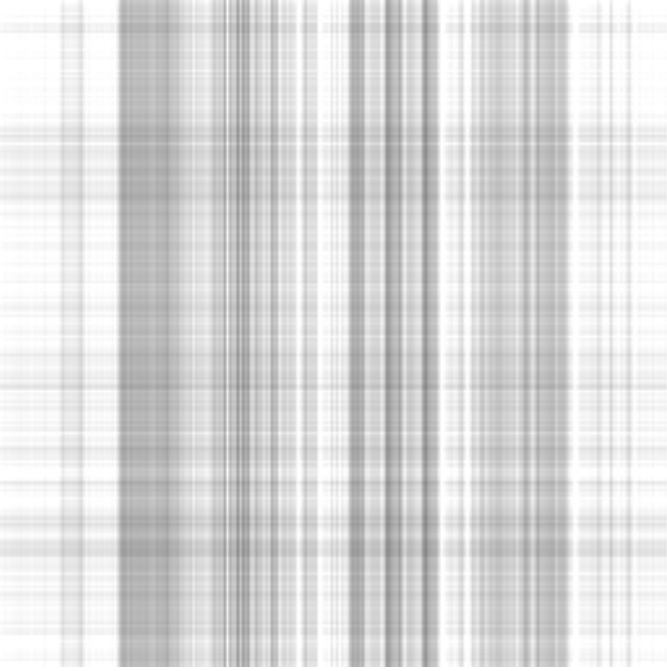}\vspace{0pt}
			\includegraphics[width=\linewidth]{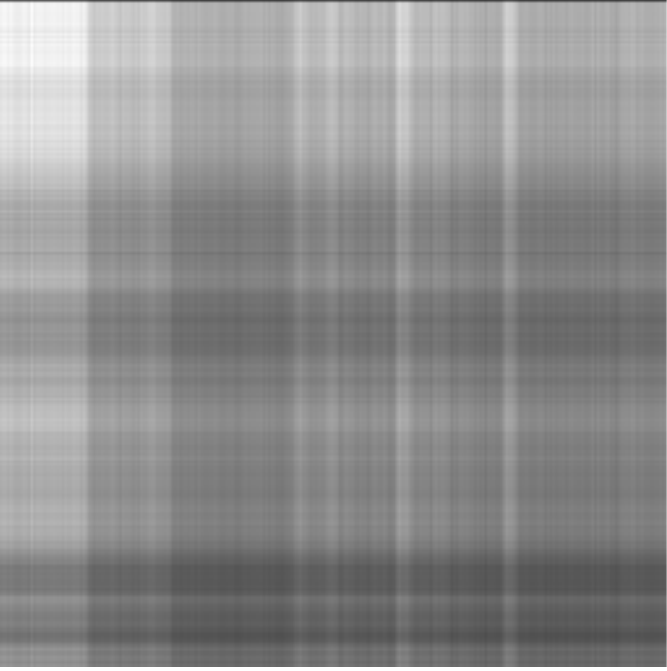}\vspace{0pt}
			\includegraphics[width=\linewidth]{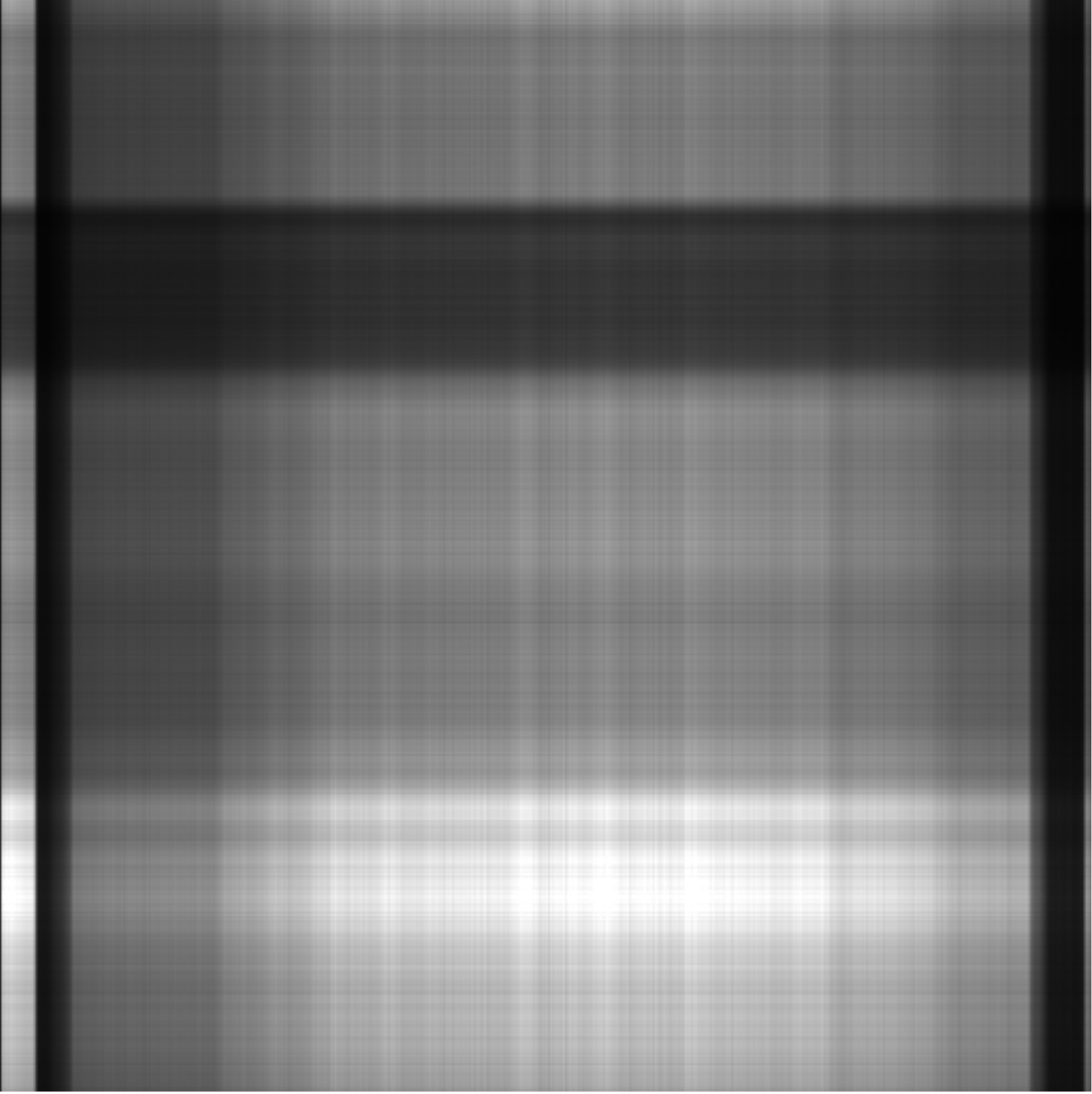}
			\caption{\tiny{SVT}}
		\end{subfigure}
		
	\end{subfigure}
	\vfill
	\caption{Image inpainting sample of image under a mixture of Gaussian noise.}
	\label{fig:images-mix-gau}
\end{figure}

\begin{table*}
	\caption{Image inpainting performance comparison under a mixture of Gaussian noise: PSNR and running times.}\label{tab:image-gmm}
	\centering
	\begin{tabular}{|c|cc|cc|cc|cc|cc}
		\hline
		{\multirow{2}{*}{\diagbox{Method}{Image}}}&\multicolumn{2}{c|}{Chart
		} &\multicolumn{2}{c|}{House}&\multicolumn{2}{c|}{Splash}
		\\ \cline{2-7}
		&  PSNR      & time &  PSNR      & time
		&PSNR         & time
		\\   \hline
		SPG &     26.21  &      7.33  &     29.19  &      9.34  &     31.92  &     39.02            \\		
		VBMFL1 &     21.43  &     65.67  &     28.08  &     53.84  &     30.88  &    183.73            \\
		
		FPCA &     16.96  &      7.45  &     20.35  &      4.75  &     24.05  &     24.64              \\
		
		SVT &     10.97  &     14.33  &     17.20  &      8.22  &     13.79  &     22.13             \\
		\hline
	\end{tabular}	
\end{table*}

In Figure \ref{fig:images-gau} and Table \ref{tab:image}, we consider the case where entries are missing at random by sampling ratio $ SR = 0.7 $, Gaussian noise with variance $0.0001$ is added to the observed pixels. It can be seen from Figure \ref{fig:images-gau} and Table \ref{tab:image} that the recovery effect of the algorithm based on $ l_1 $ norm is similar to that of the algorithm based on $ l_2 $ norm in Gaussian noise, and the running time is relatively long. However, SPG algorithm still has the highest PSNR value and shorter running time. Therefore, whether Gaussian noise or non-Gaussian noise, SPG algorithm performs best for image restoration.

\begin{figure}[htbp]
	\centering
	\begin{subfigure}[b]{1\linewidth}
		\begin{subfigure}[b]{0.161\linewidth}
			\centering
			\includegraphics[width=\linewidth]{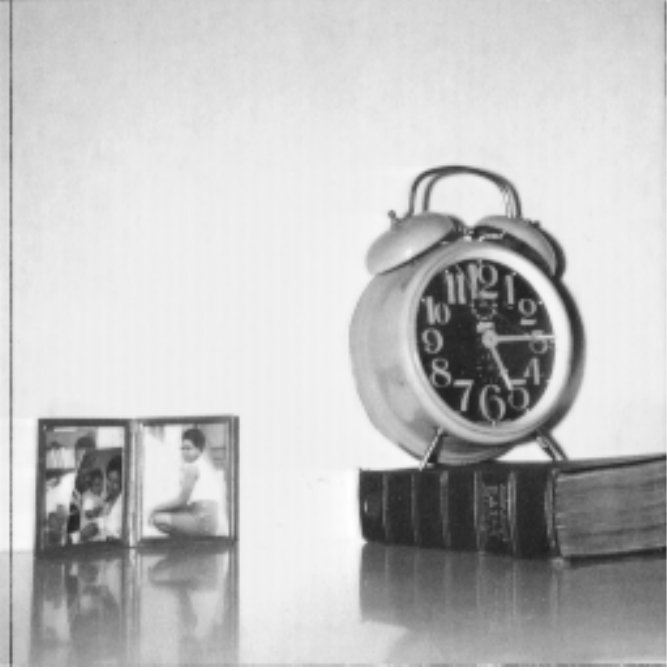}\vspace{0pt}
			\includegraphics[width=\linewidth]{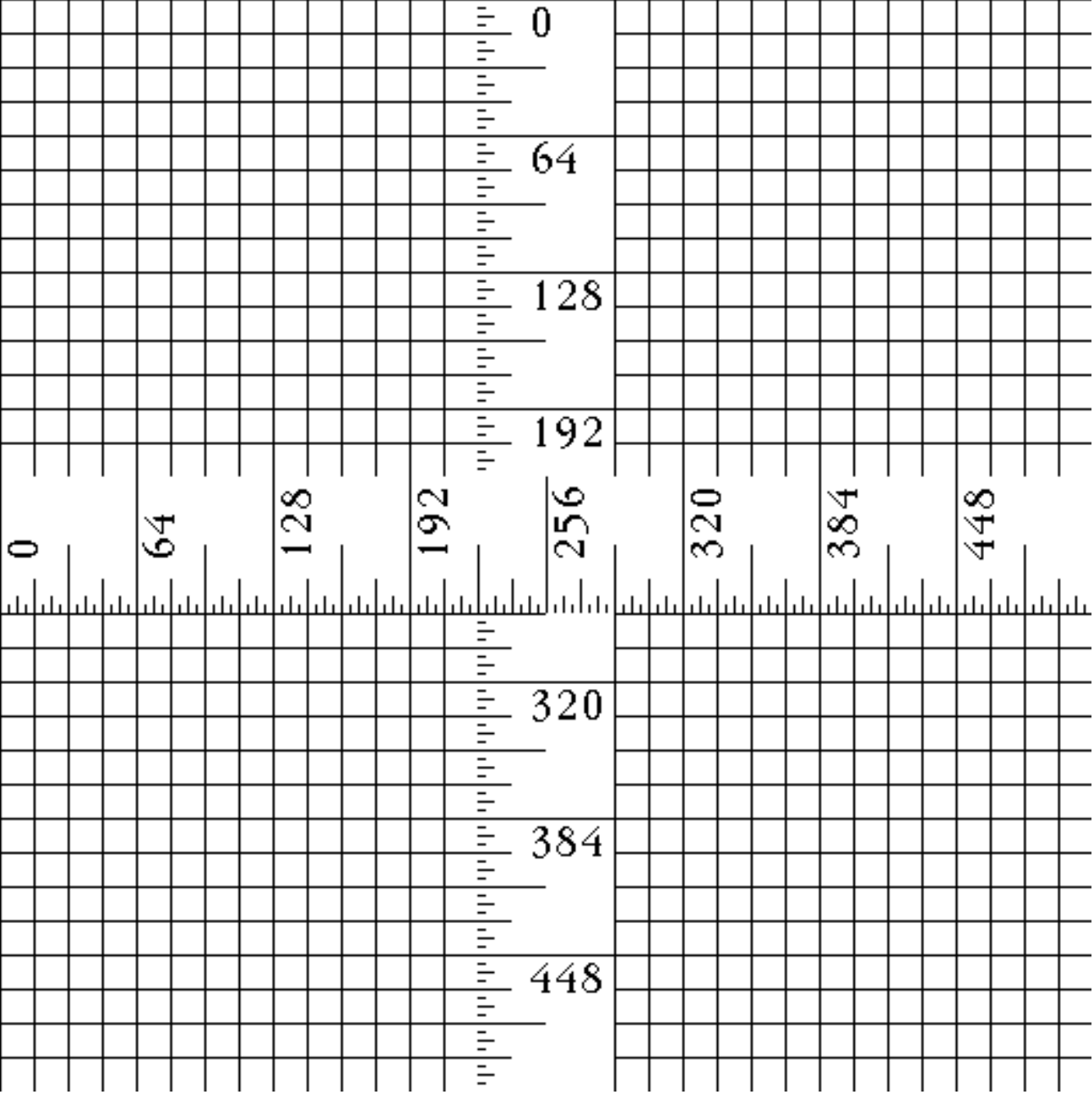}\vspace{0pt}
			\includegraphics[width=\linewidth]{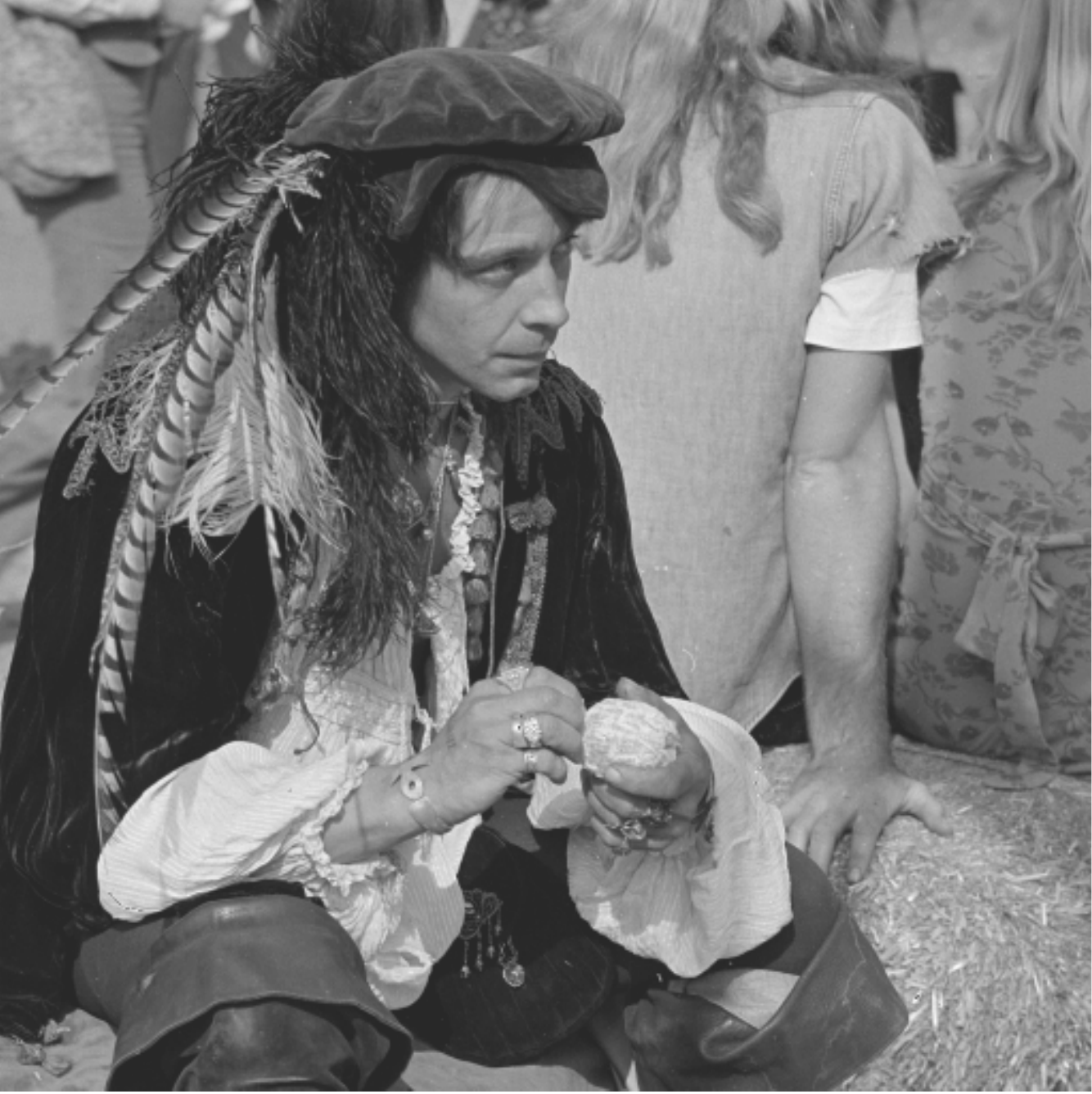}
			\caption{\tiny{Original}}
		\end{subfigure}
		\begin{subfigure}[b]{0.161\linewidth}
			\centering
			\includegraphics[width=\linewidth]{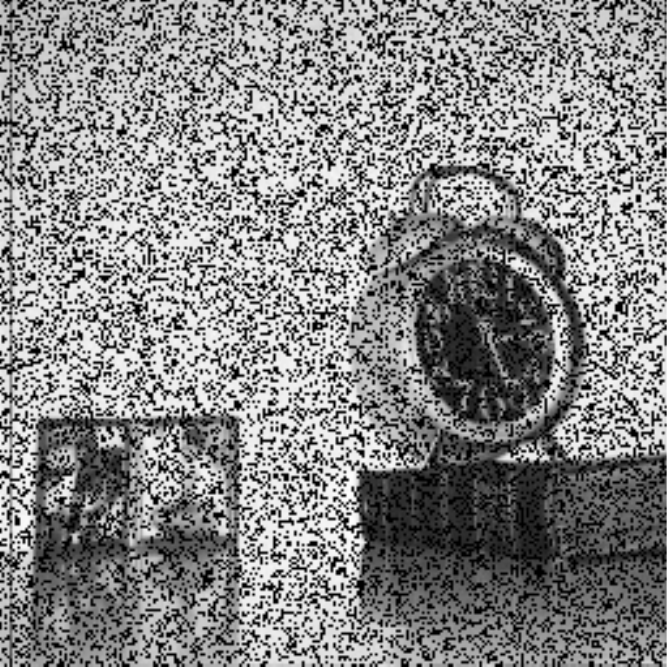}\vspace{0pt}
			\includegraphics[width=\linewidth]{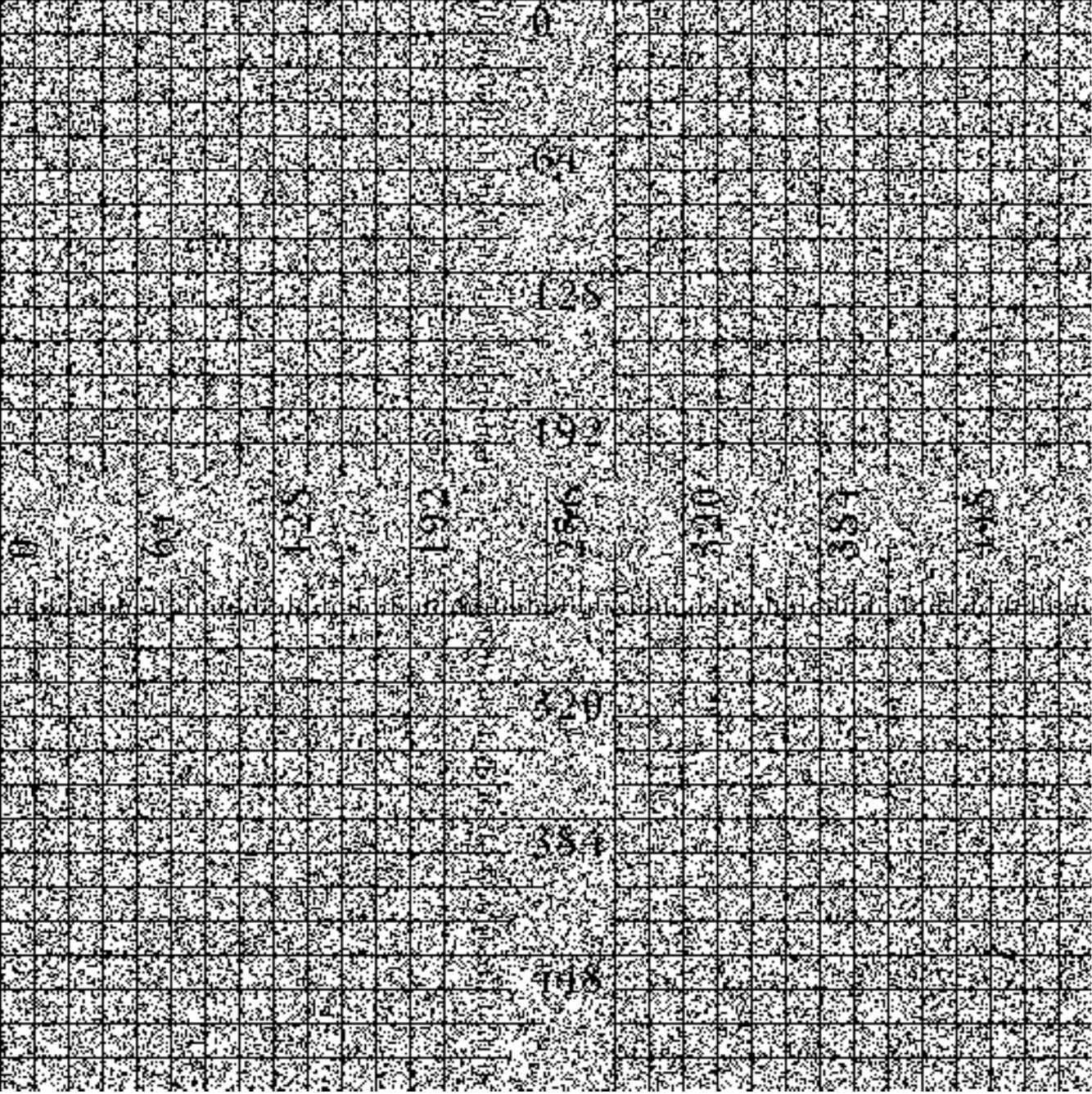}\vspace{0pt}
			\includegraphics[width=\linewidth]{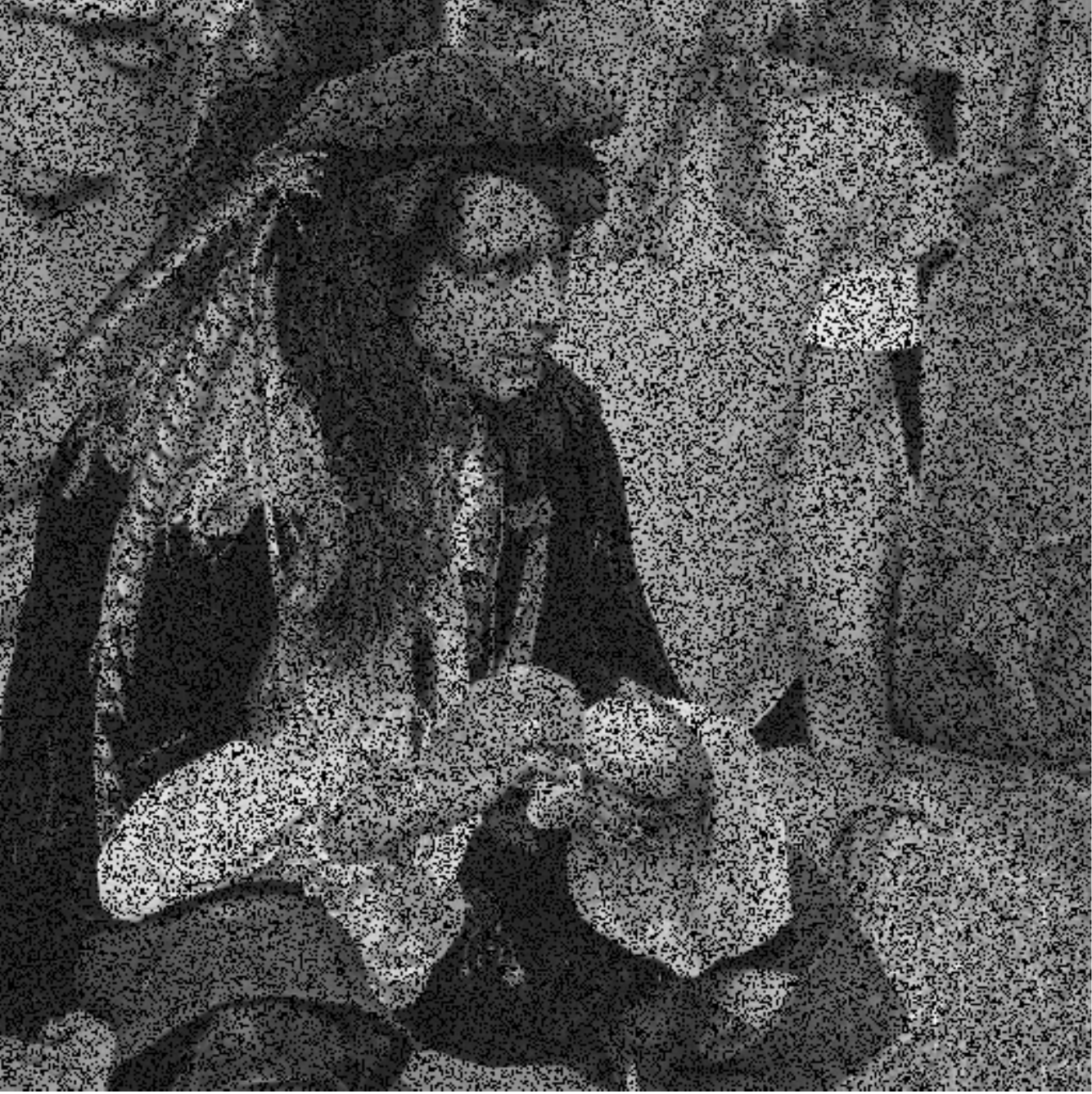}
			\caption{\tiny{Observation}}
		\end{subfigure}
		\begin{subfigure}[b]{0.161\linewidth}
			\centering
			\includegraphics[width=\linewidth]{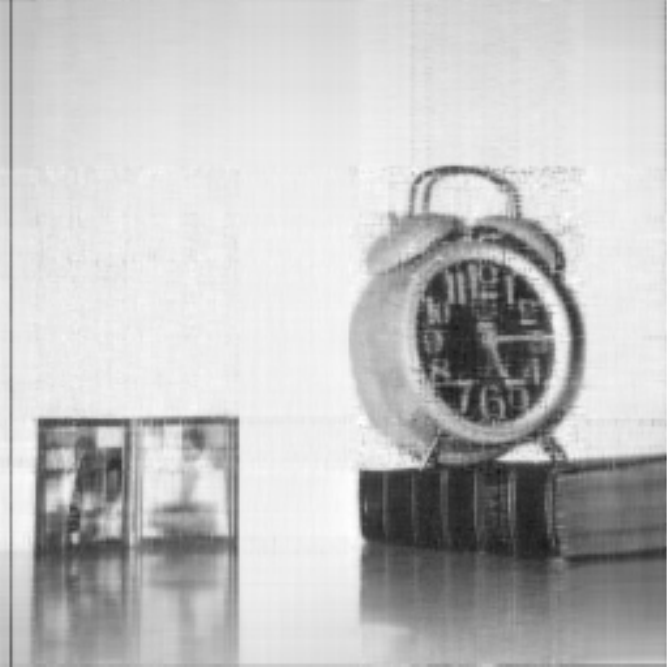}\vspace{0pt}
			\includegraphics[width=\linewidth]{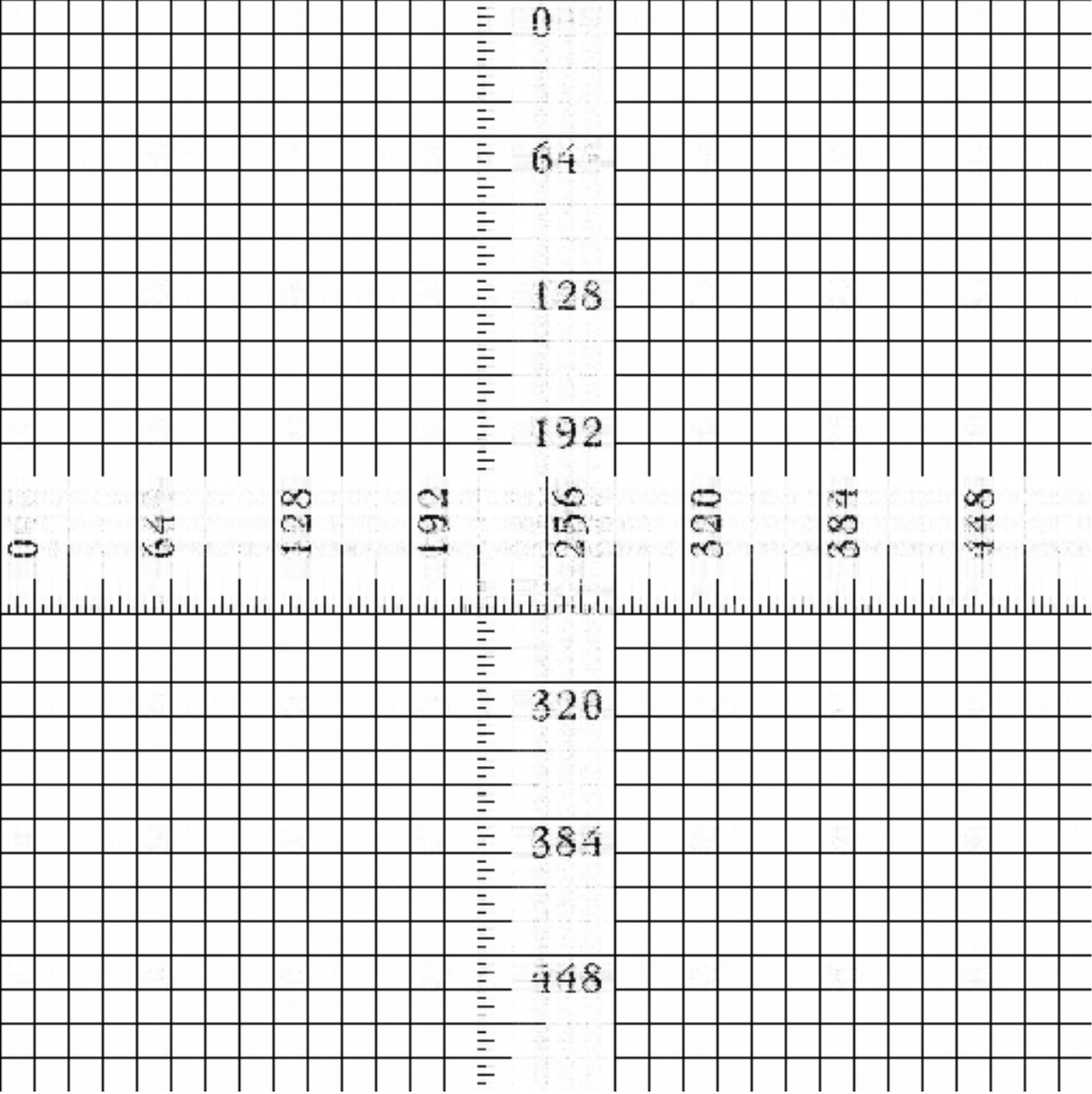}\vspace{0pt}
			\includegraphics[width=\linewidth]{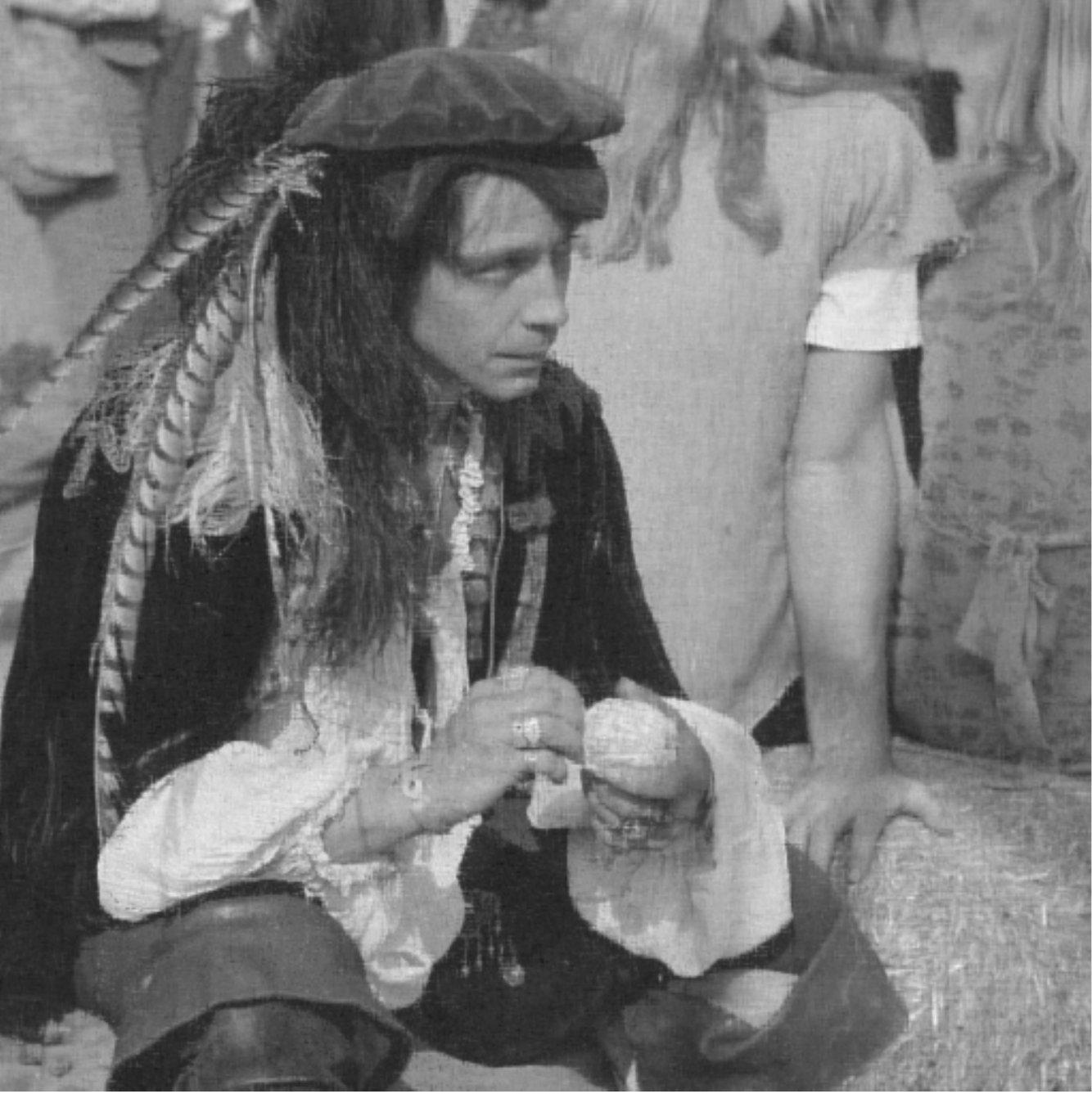}
			\caption{\tiny{SPG}}
		\end{subfigure}
		\begin{subfigure}[b]{0.161\linewidth}
			\centering
			\includegraphics[width=\linewidth]{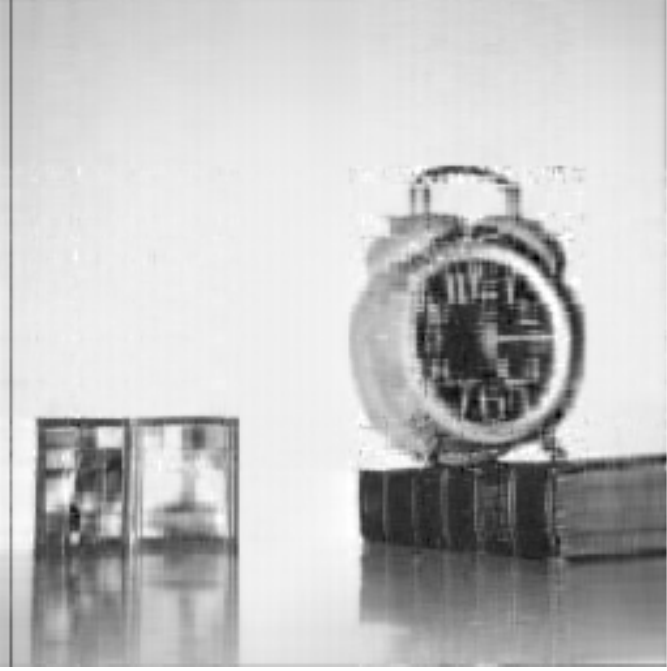}\vspace{0pt}
			\includegraphics[width=\linewidth]{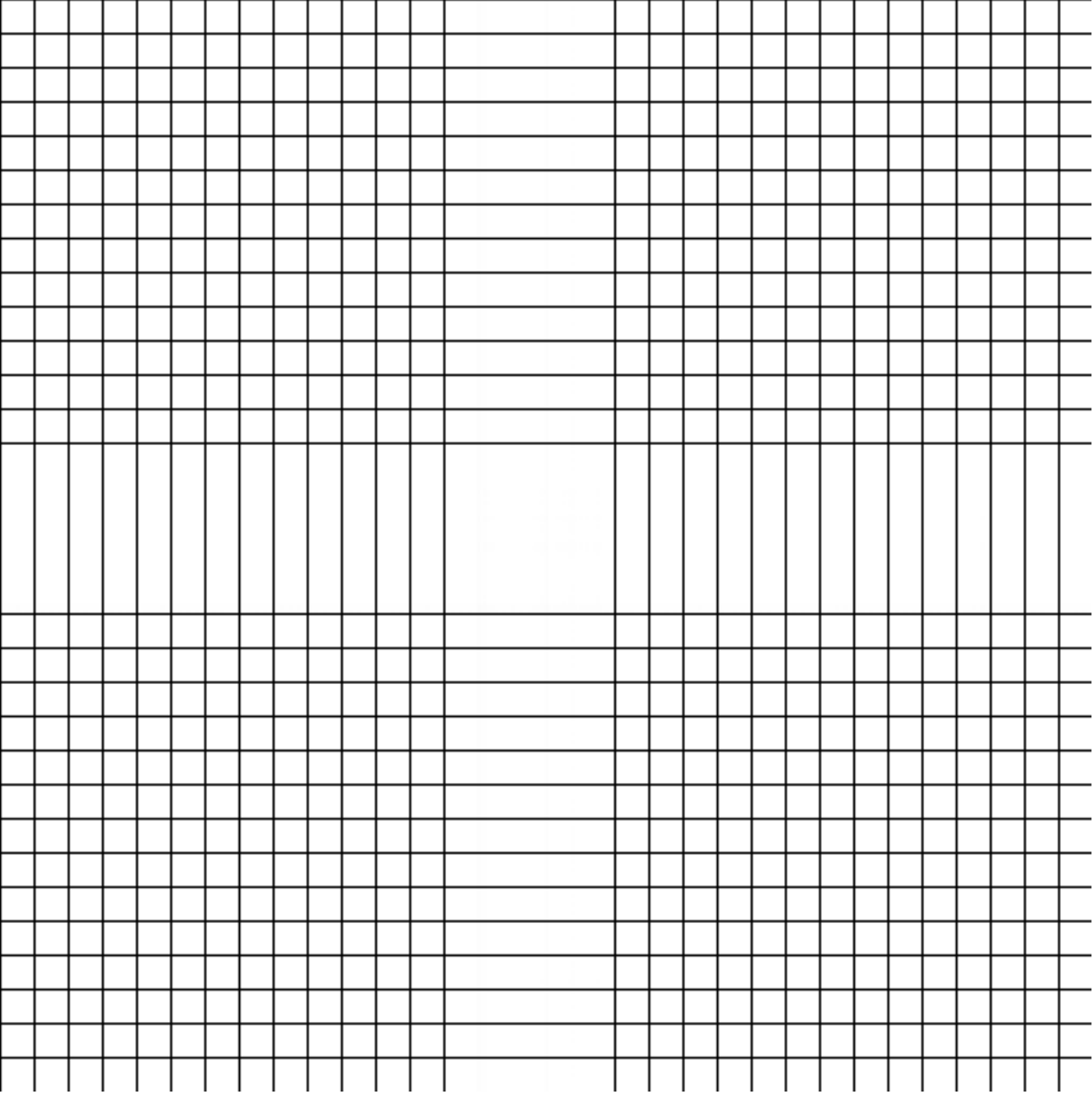}\vspace{0pt}
			\includegraphics[width=\linewidth]{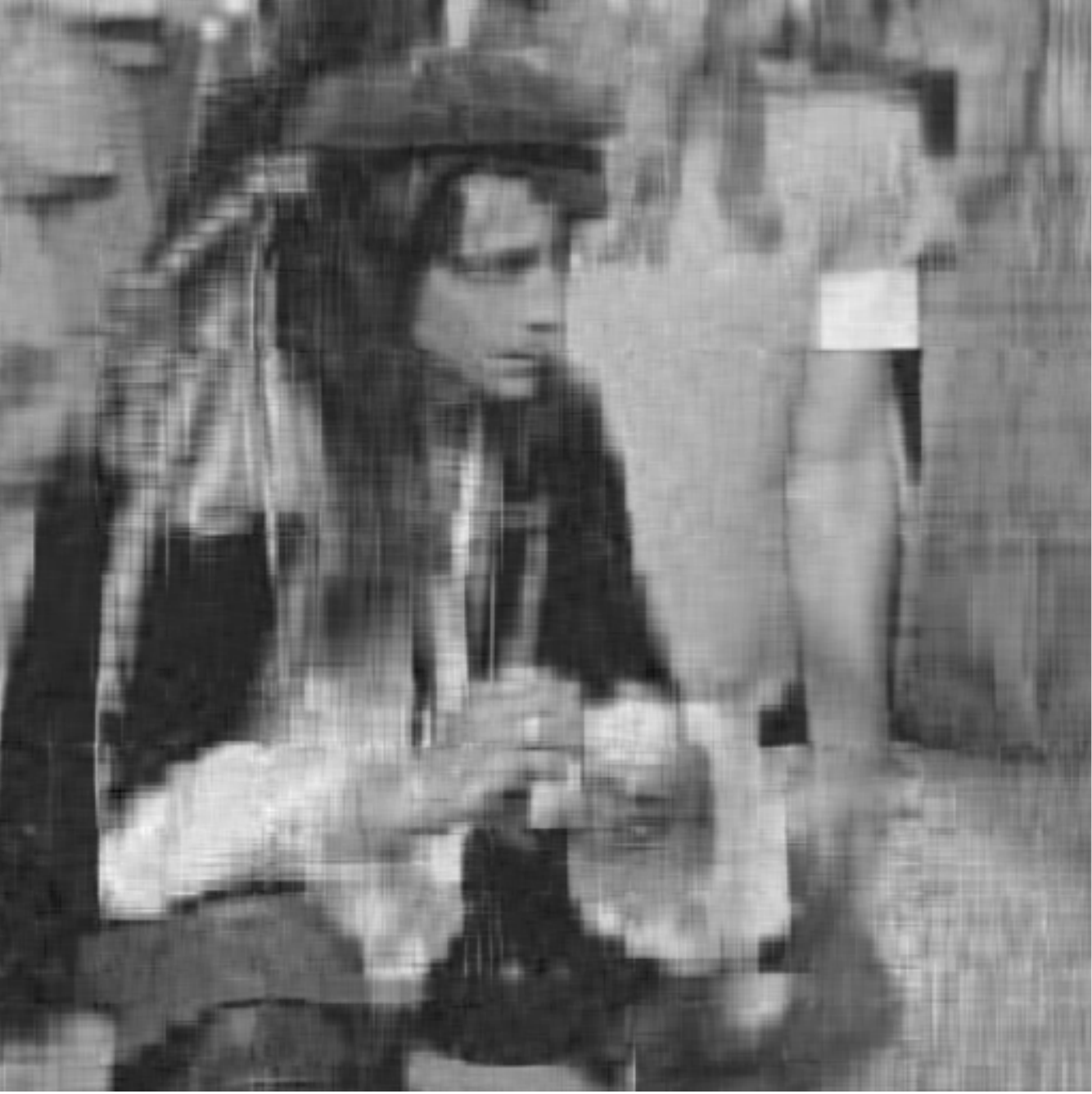}
			\caption{\tiny{VBMFL1}}
		\end{subfigure}
		\begin{subfigure}[b]{0.161\linewidth}
			\centering
			\includegraphics[width=\linewidth]{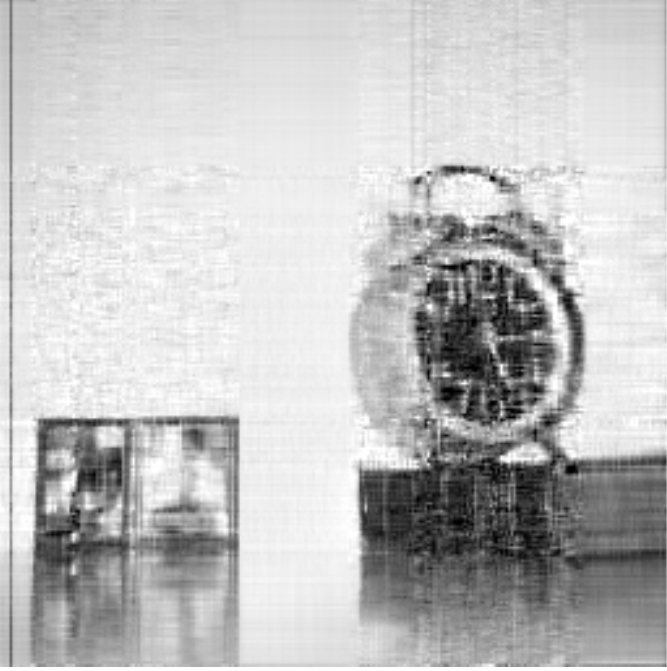}\vspace{0pt}
			\includegraphics[width=\linewidth]{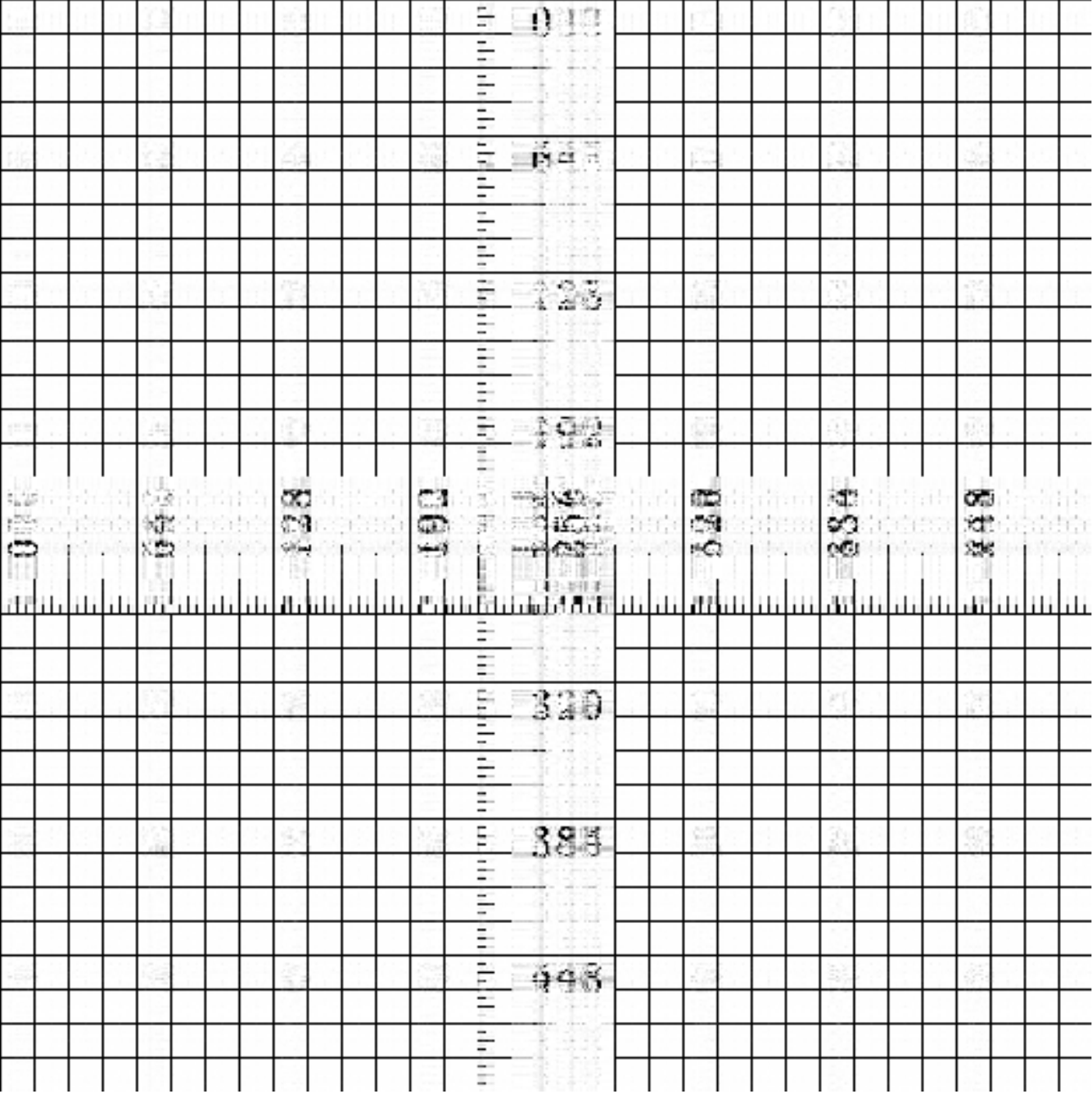}\vspace{0pt}
			\includegraphics[width=\linewidth]{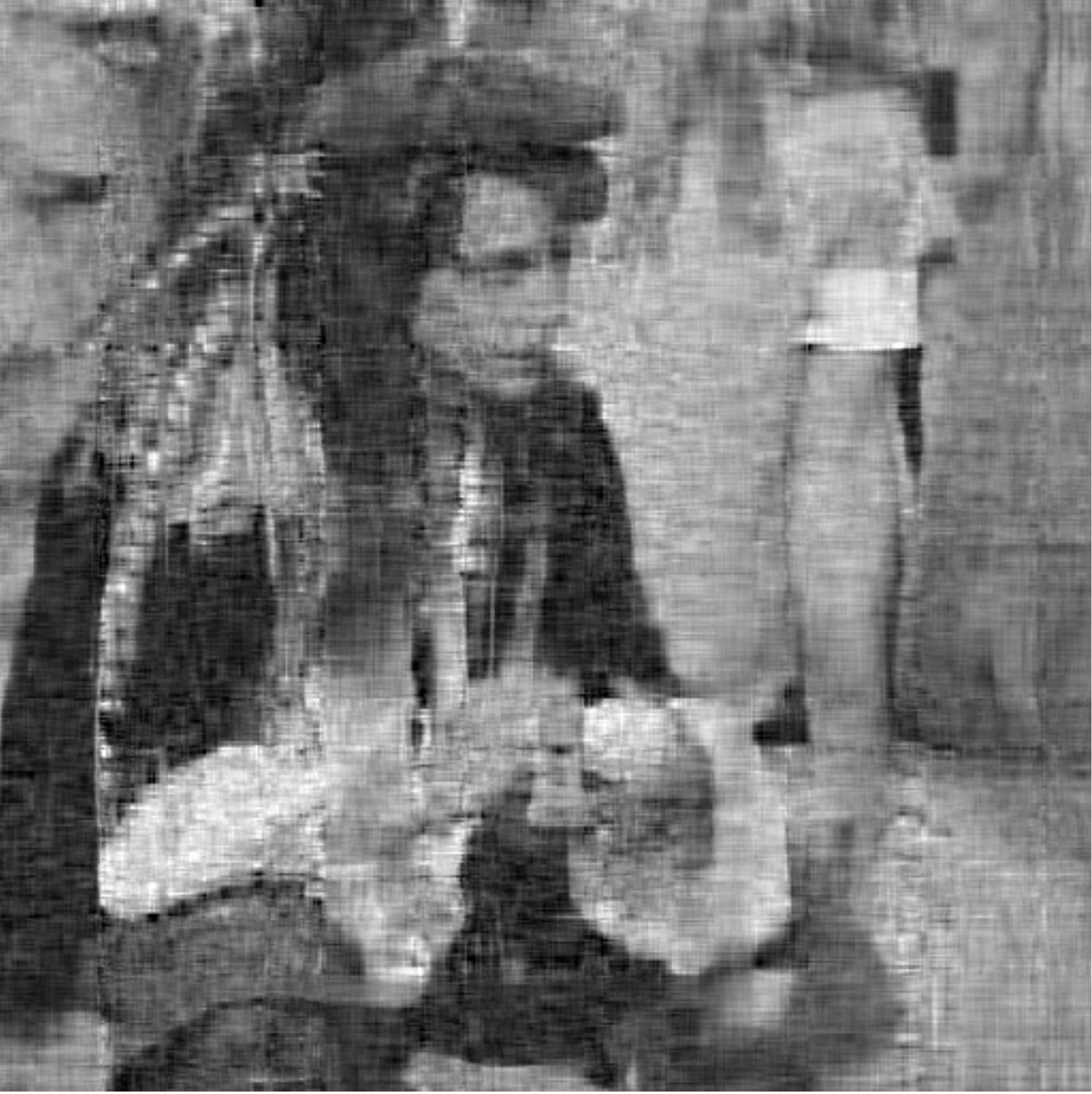}
			\caption{\tiny{FPCA}}
		\end{subfigure}
		\begin{subfigure}[b]{0.161\linewidth}
			\centering
			\includegraphics[width=\linewidth]{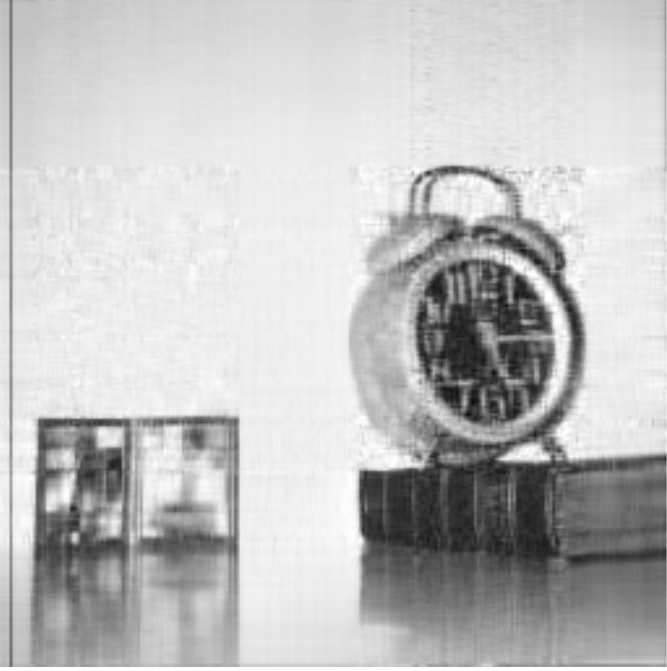}\vspace{0pt}
			\includegraphics[width=\linewidth]{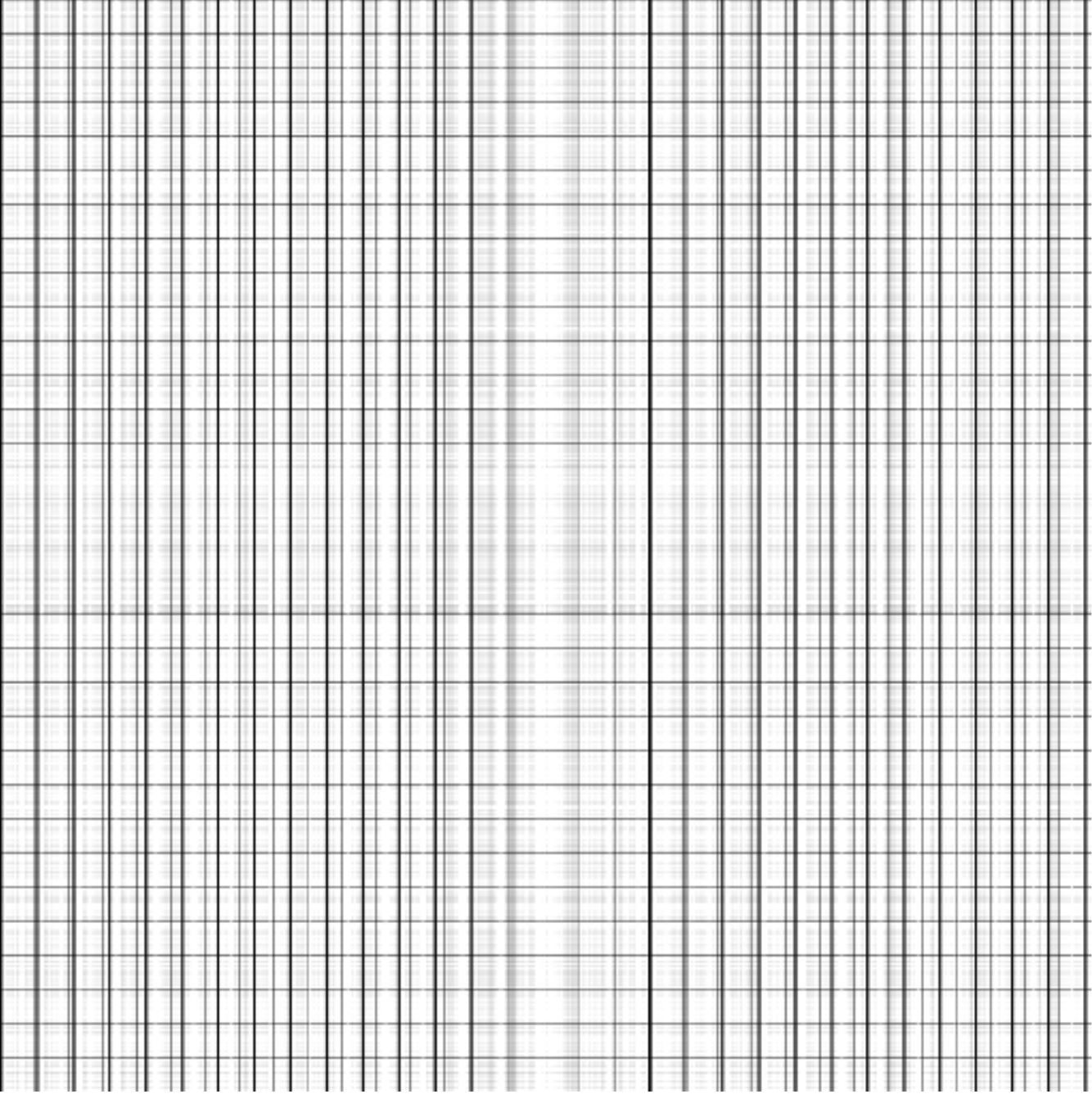}\vspace{0pt}
			\includegraphics[width=\linewidth]{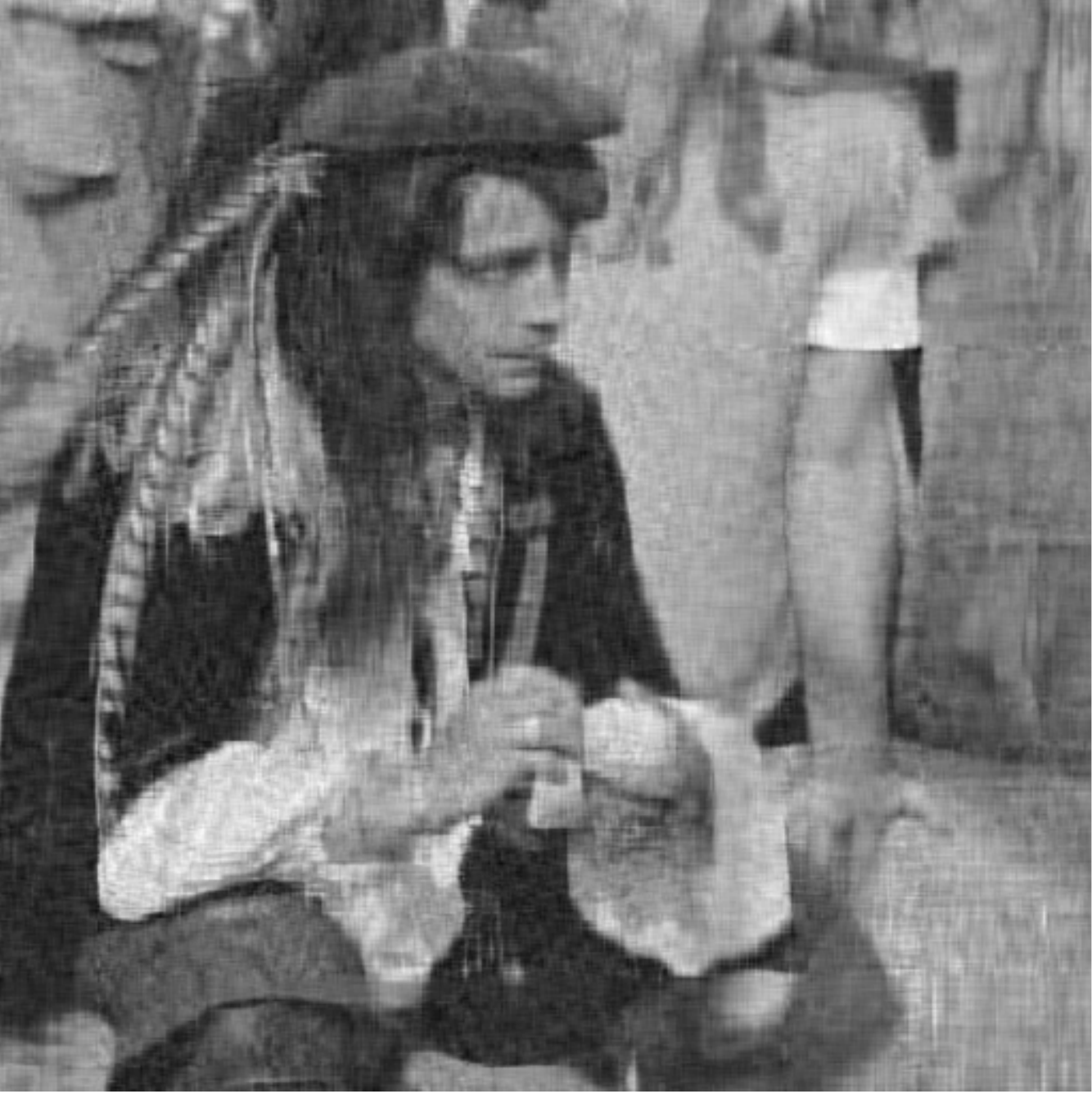}
			\caption{\tiny{SVT}}
		\end{subfigure}
		
	\end{subfigure}
	\vfill
	\caption{Image inpainting sample of image under Gaussian noise.}
	\label{fig:images-gau}
\end{figure}

\begin{table*}
	\caption{Image inpainting performance comparison under Gaussian noise: PSNR and running times.}\label{tab:image}
	\centering
	\begin{tabular}{|c|cc|cc|cc|cc|cc}
		\hline
		{\multirow{2}{*}{\diagbox{Method}{Image}}}&\multicolumn{2}{c|}{Clock
		} &\multicolumn{2}{c|}{Ruler}&\multicolumn{2}{c|}{Man}
		\\ \cline{2-7}
		&  PSNR      & time &  PSNR      & time
		&PSNR         & time
		\\   \hline
		SPG &     29.65  &      3.62  &     28.65  &     12.37  &     30.11  &     21.17  \\		
		VBMFL1 &     28.23  &     25.48  &     17.88  &     15.73  &     23.82  &    114.15  \\
		
		FPCA &     24.13  &      4.82  &     21.75  &     23.02  &     22.68  &     22.58  \\
		
		SVT &     28.87  &     27.64  &     10.37  &     40.73  &     25.86  &     81.60  \\
		\hline
	\end{tabular}	
\end{table*}

\subsection{MRI Volume Dataset}
The resolution of the MRI volume dataset\footnote{\url{http://graphics.stanford.edu/data/voldata/}} is of size $ 217 \times 181 $ with 181 slices and we selected the 38th slice and the 88th slice for the experiment. We consider the case where entries are missing at random by sampling ratio $ SR = 0.9 $. The GMM noise are set at $\sigma_{A}^{2}=0.0001, \sigma_{B}^{2}=0.1, c=0.01$.

From Figure \ref{fig:images-MRI} and Histogram \ref{his:MRI}, we can see that the effect of FPCA and SVT to restore images is very poor. The effect of VBMFL1 algorithm to restore images is good, but the running time is relatively long. SPG algorithm to restore the image effect and good running time is short. In summary, the SPG algorithm has the best recovery effect.

\begin{figure}[htbp]
	\centering
	\begin{subfigure}[b]{1\linewidth}
		\begin{subfigure}[b]{0.161\linewidth}
			\centering
			\includegraphics[width=\linewidth]{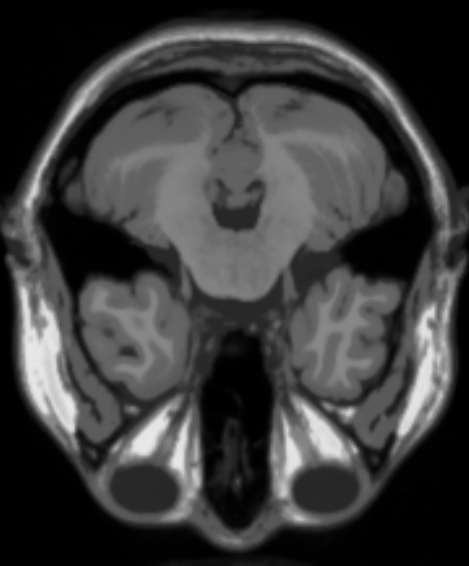}\vspace{0pt}
			\includegraphics[width=\linewidth]{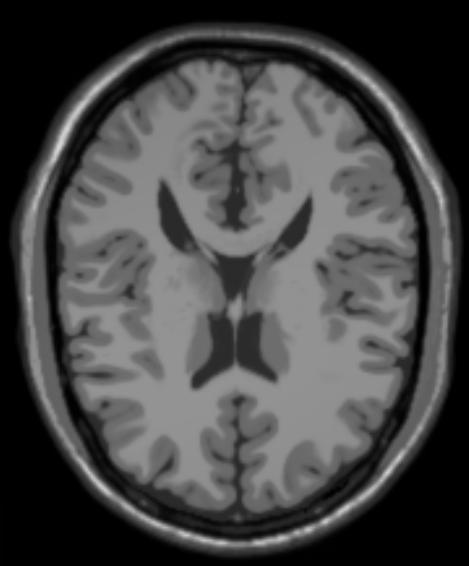}
			\caption{\tiny{Original}}
		\end{subfigure}
		\begin{subfigure}[b]{0.161\linewidth}
			\centering
			\includegraphics[width=\linewidth]{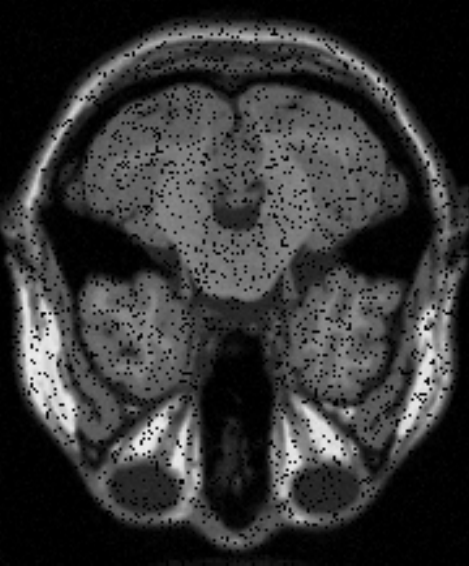}\vspace{0pt}
			\includegraphics[width=\linewidth]{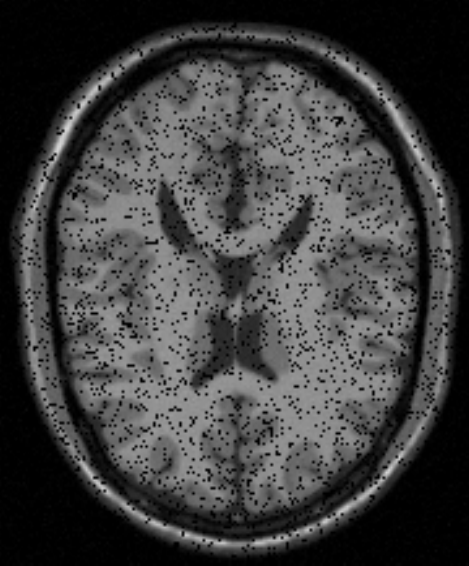}
			\caption{\tiny{Observation}}
		\end{subfigure}
		\begin{subfigure}[b]{0.161\linewidth}
			\centering
			\includegraphics[width=\linewidth]{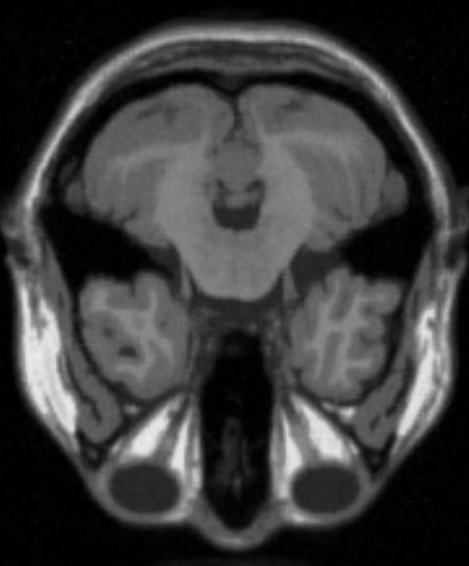}\vspace{0pt}
			\includegraphics[width=\linewidth]{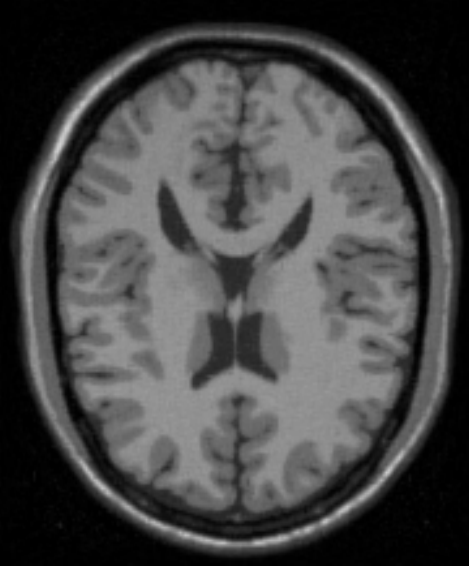}
			\caption{\tiny{SPG}}
		\end{subfigure}
		\begin{subfigure}[b]{0.161\linewidth}
			\centering
			\includegraphics[width=\linewidth]{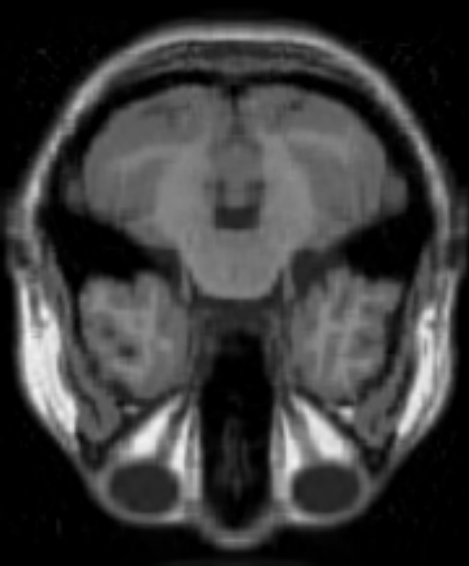}\vspace{0pt}
			\includegraphics[width=\linewidth]{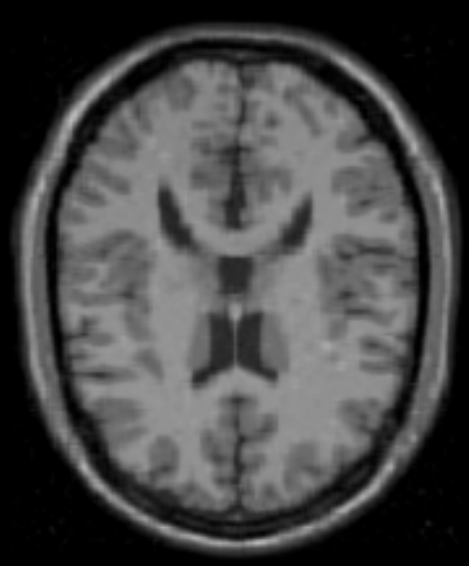}
			\caption{\tiny{VBMFL1}}
		\end{subfigure}
		\begin{subfigure}[b]{0.161\linewidth}
			\centering
			\includegraphics[width=\linewidth]{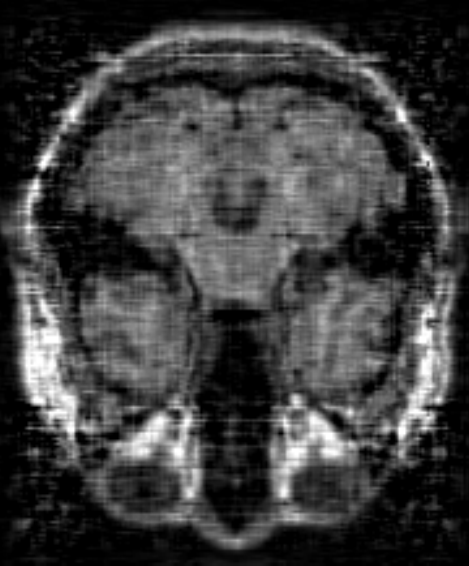}\vspace{0pt}
			\includegraphics[width=\linewidth]{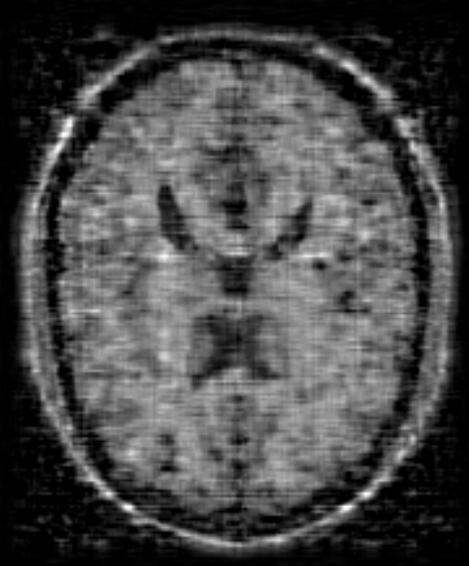}
			\caption{\tiny{FPCA}}
		\end{subfigure}
		\begin{subfigure}[b]{0.161\linewidth}
			\centering
			\includegraphics[width=\linewidth]{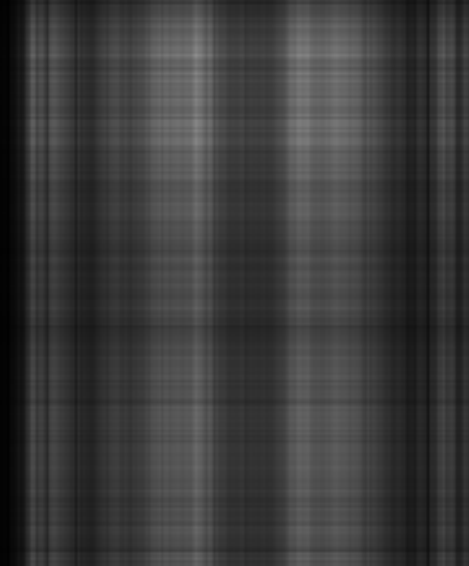}\vspace{0pt}
			\includegraphics[width=\linewidth]{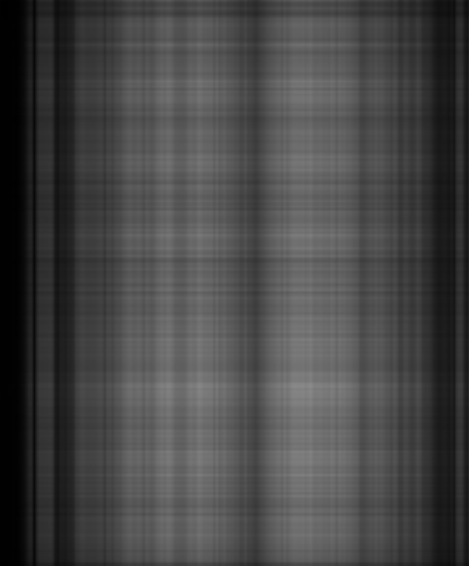}
			\caption{\tiny{SVT}}
		\end{subfigure}
		
	\end{subfigure}
	\vfill
	\caption{Completion results of the MRI Volume Dataset.}
	\label{fig:images-MRI}
\end{figure}

\begin{figure}[htbp]
	\centering
	\begin{subfigure}[b]{1\linewidth}
		\begin{subfigure}[b]{0.48\linewidth}
			\centering
			\includegraphics[width=\linewidth]{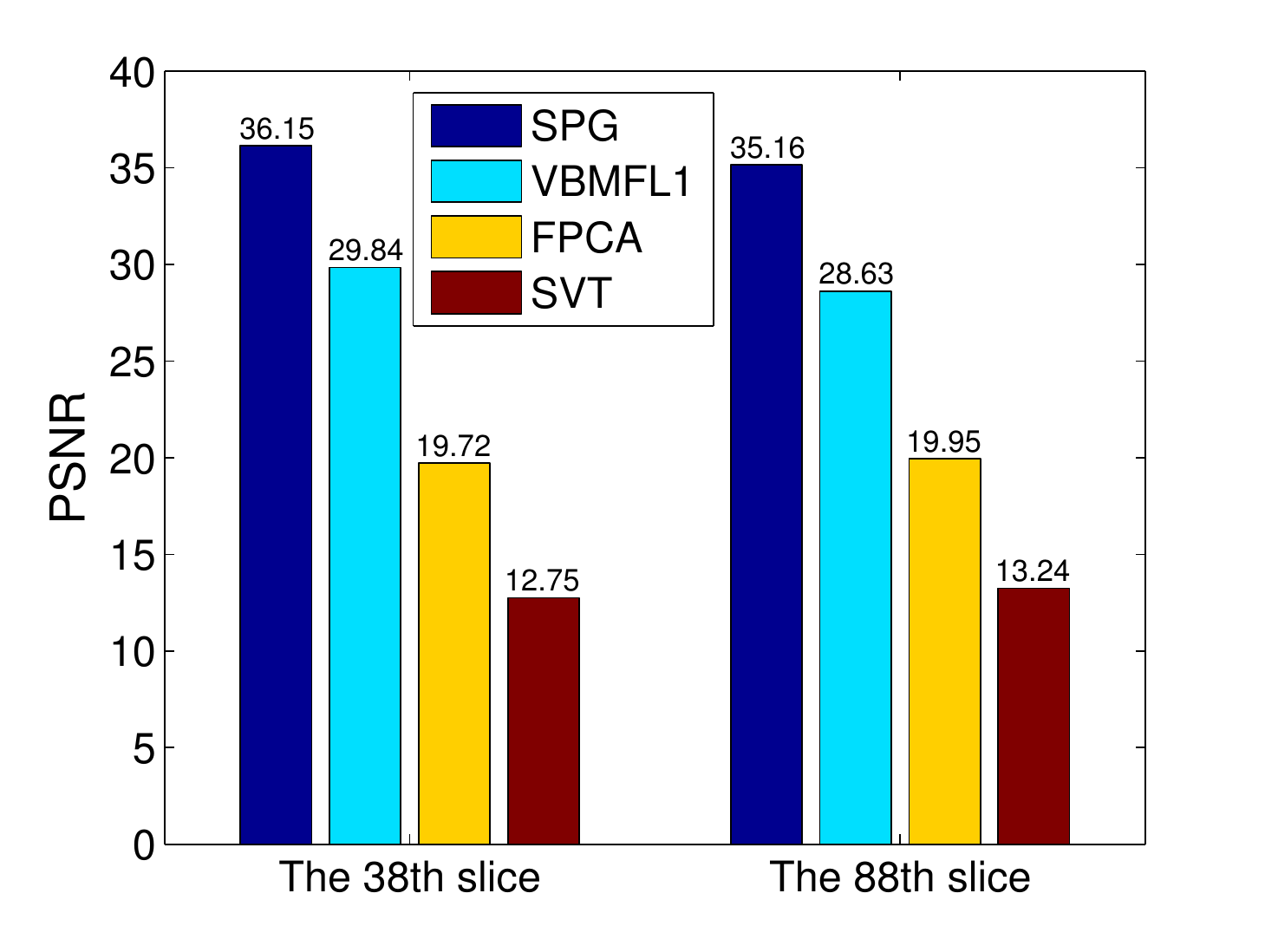}
		\end{subfigure}  	
		\begin{subfigure}[b]{0.48\linewidth}
			\centering
			\includegraphics[width=\linewidth]{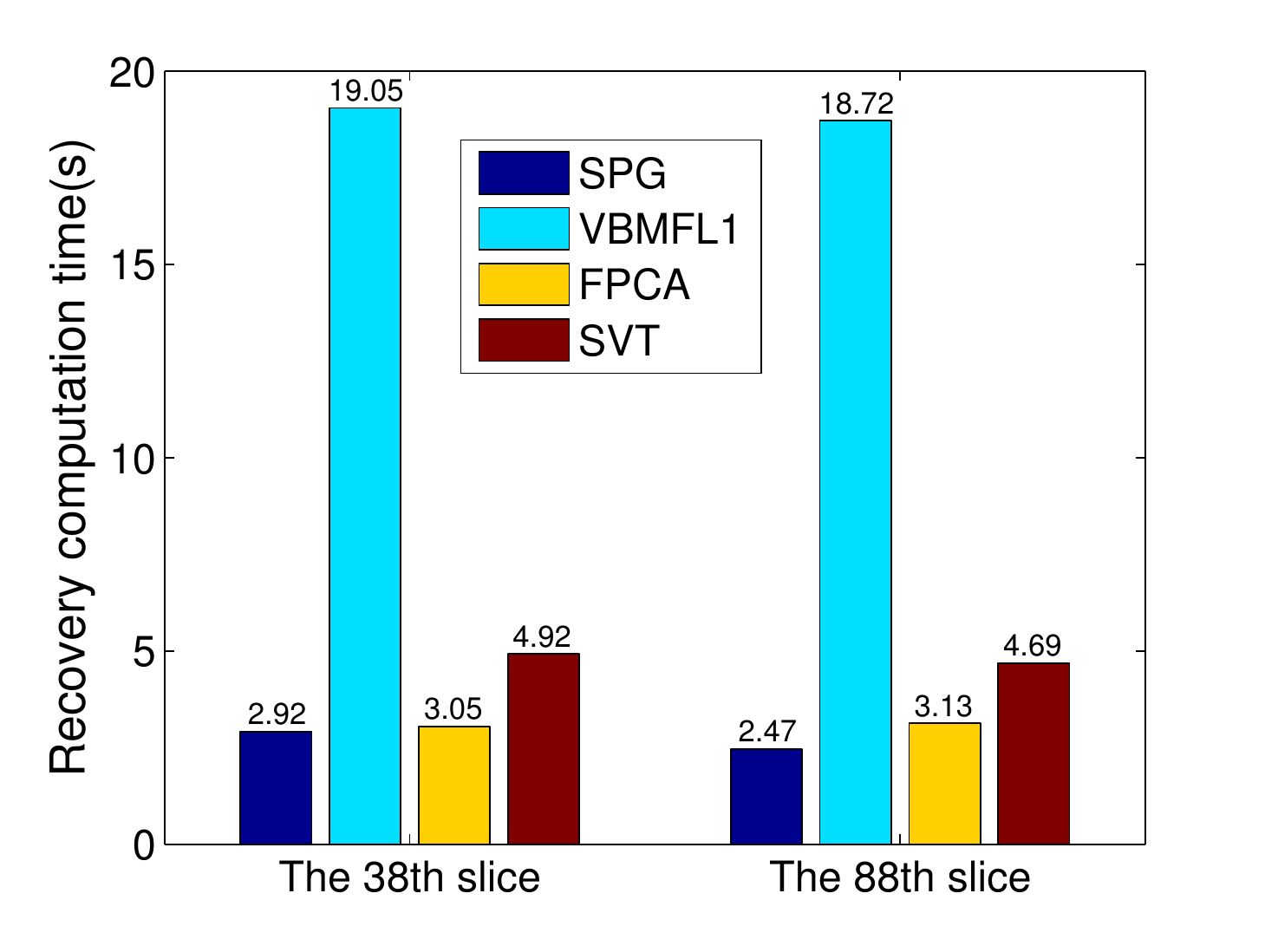}
		\end{subfigure}
	\end{subfigure}
	\vfill
	\caption{Histogram of representation results for the MRI Volume Dataset.}
	\label{his:MRI}
\end{figure}

\bigskip
\noindent
{\bf Acknowledgement}
Xinzhen Zhang was partly supported by
the National Natural Science Foundation of China
(Grant No. 11871369).
Quan Yu was partly supported by Tianjin Research Innovation Project for Postgraduate Students
(Grant No. 2020YJSS140).

\end{document}